\newcommand\beq{\begin{equation}}
\newcommand\eeq{\end{equation}}
\newcommand\bce{\begin{center}}
\newcommand\ece{\end{center}}
\newcommand\bea{\begin{eqnarray}}
\newcommand\eea{\end{eqnarray}}
\newcommand\ba{\begin{array}}
\newcommand\ea{\end{array}}
\newcommand\ben{\begin{enumerate}}
\newcommand\een{\end{enumerate}}
\newcommand\bit{\begin{itemize}}
\newcommand\eit{\end{itemize}}
\newcommand\brr{\begin{array}}
\newcommand\err{\end{array}}
\newcommand\bt{\begin{tabular}}
\newcommand\et{\end{tabular}}
\newcommand\nn{\nonumber}
\newcommand\ms{\medskip}
\newcommand\ul{\underline}
\newcommand\ol{\overline}
\renewcommand\S{{\mathcal S}}
\newcommand\s{{\tt S}}
\newcommand\U{{\mathcal U}}
\newcommand\A{{\mathcal A}}
\newcommand\LL{{\mathcal L}}
\newcommand\Sh{{\mathcal F}}
\newcommand\Z{{\mathcal Z}}
\newcommand\T{{\mathcal T}}
\newcommand\G{{\mathcal G}}
\newcommand\Hk{{\mathcal H}}
\DeclareMathOperator\maj{maj} 
\DeclareMathOperator\RSK{RSK}
\newcommand\x{{\mathbf x}}
\newcommand\ten{10}
\newcommand\eleven{11}
\newcommand\twelve{12}
\newcommand\blank{$ $}
\newcommand\hs{\hspace{2.6mm}}
\newcommand{\W}{{\rm{Weak}}}
\newcommand{\shape}{{\rm{shape}}}
\newcommand\ddom{{d_{\rm{dom}}}}
\newcommand\dXn{{d_{X_n}}}
\newcommand{\bbz}{\mathbb{Z}}
\newcommand{\tC}{{\widetilde{C}}}
\newcommand{\D}{{\rm{Des}}}
\newcommand{\invol}{\varphi}
\newcommand{\WW}{{\mathcal W}}
\newcommand{\ww}{\mathbf{w}}
\newcommand{\enc}{\nu}
\newcommand{\ZZ}{\mathbb{Z}}
\newcommand{\CC}{\mathbb{C}}
\newcommand{\inv}{\operatorname{inv}}
\newtheorem{theorem}{Theorem}[section]
\newtheorem{proposition}[theorem]{Proposition}
\newtheorem{lemma}[theorem]{Lemma}
\newtheorem{corollary}[theorem]{Corollary}
\newtheorem{definition}[theorem]{Definition}
\newenvironment{ex}[1][Example.]{\begin{trivlist}
\item[\hskip \labelsep {\bfseries
#1}]}{\end{trivlist}}
\newtheorem{question}[theorem]{Question}
\newtheorem{remark}[theorem]{Remark}
\newtheorem{claim}[theorem]{Claim}
\numberwithin{figure}{section}
\title{Arc Permutations}
\author{Sergi Elizalde~\thanks{Department of Mathematics, Dartmouth College, Hanover, NH 03755, USA. {\tt sergi.elizalde@dartmouth.edu}.
Partially supported by NSF grant DMS-1001046.} \and Yuval
Roichman~\thanks{Department of Mathematics, Bar-Ilan University,
 Ramat-Gan 52900, Israel.  {\tt yuvalr@math.biu.ac.il}. Partially supported
 by Bar-Ilan Rector Internal Research Grant.}}
\date{}
\begin{document}

\maketitle

\begin{abstract}
Arc permutations and unimodal permutations were introduced in the
study of triangulations and characters. This paper studies combinatorial properties
and structures on these permutations. First, both sets are characterized by pattern avoidance.
It is also shown that arc permutations carry a natural affine Weyl
group action, and that the number of geodesics between a
distinguished pair of antipodes  in the associated Schreier graph,
as well as the number of maximal chains in the weak order on
unimodal permutations, are both equal to twice the number of
standard Young tableaux of shifted staircase shape. Finally, a
bijection from non-unimodal arc permutations to Young tableaux of
certain shapes, which preserves the descent set, is described and
applied to deduce a conjectured character formula of Regev.
%\keywords{Arc permutation \and Unimodal permutation \and Pattern avoidance \and Affine Weyl group \and Descent set \and Shifted staircase \and Weak order}

%\subclass{05A05 \and 05E18 \and 05E10 \and 05A19 \and 05A15}
\end{abstract}

\tableofcontents

\section{Introduction}

A permutation in the symmetric group $\S_n$ is an {\em arc
permutation} if every prefix forms an interval in $\bbz_n$. It was
found recently that arc permutations play an important role in the
study of graphs of triangulations of a polygon~\cite{TFT2}.
A familiar subset of arc permutations is that of unimodal arc permutations,
which are the permutations whose inverses have one local
maximum or one local minimum.
These permutations appear in the study of Hecke algebra
characters~\cite{Gelfand,RIMS}. Their cycle structure was studied by Thibon~\cite{Thibon} and others.

In this paper we study
combinatorial properties and structures on these sets of
permutations.

In Section~\ref{pattern-section} it is shown that both arc and
unimodal permutations may be characterized by pattern avoidance,
as described in Theorem~\ref{arc-pattern-thm} and
Proposition~\ref{unimodal-pattern}.

In Section~\ref{section-prefixes} we describe a bijection between unimodal permutations and certain shifted shapes.
The shifted shape corresponding to a unimodal permutation $\pi$ has the property that standard Young tableaux of that shape encode
all reduced words of $\pi$. It follows that
\begin{itemize}
\item Domination in the weak order  on unimodal permutations is
characterized by inclusion of the corresponding shapes
(Theorem~\ref{weak_domination}). Hence, this partially ordered set is a modular
lattice (Proposition~\ref{weak-properties}). \item The number of
maximal chains in this order is equal to twice the number of
staircase shifted Young tableaux, that is, $2\binom{n}{2}! \cdot
\prod_{i=0}^{n-2} \frac{i!}{(2i+1)!}$
(Corollary~\ref{mc_enumeration}).
\end{itemize}

The above formula is analogous to a well-known result of Richard
Stanley~\cite{Stanley-mc}, stating that the number of maximal
chains in the weak order on $\S_n$ is equal to the number of
standard Young tableaux of triangular shape.

In Section~\ref{X-graph} we study a graph on arc permutations,
where adjacency is defined by multiplication by a simple
reflection. It is shown that this graph has the following
property: an arc permutation is unimodal if and only if it
appears in a geodesic between two distinguished antipodes. Hence
the number of geodesics between these antipodes is, again,
$2\binom{n}{2}! \cdot \prod_{i=0}^{n-2} \frac{i!}{(2i+1)!}$. This
result is analogous to \cite[Theorem 9.9]{TFT2}, and related
to~\cite[Theorem 2]{Panova}.

The set of non-unimodal arc permutations is not a union of Knuth
classes. However, it carries surprising Knuth-like properties,
which are described in Section~\ref{equid}. A bijection between
non-unimodal arc permutations and standard Young tableaux of hook
shapes plus one box is presented, and shown to preserve the
descent set. This implies that for $n\ge4$,
$$\sum_{T\in\T_n}\x^{\D(T)}=\sum_{\pi\in\Z_n}\x^{\D(\pi)},$$
where $\Z_n$ denotes the set of non-unimodal arc permutations in
$\S_n$, $\T_n$ denotes the set of standard Young tableaux of shape
$(k,2,1^{n-k-2})$ for some $2\le k\le n-2$, and $\D(\pi)$ is the
descent set of $\pi$ (see Theorem~\ref{thm:TZ}).
Further enumerative results on arc permutations by descent sets appear in Section~\ref{sec:encoding}.
These enumerative results are then applied to prove a conjectured
character formula of Amitai Regev in Section~\ref{sec:regev}.

Interactions with other mathematical objects are discussed in the last
two sections: close relations to shuffle permutations are pointed
out in Section~\ref{sec:appendix}; further representation theoretic
aspects are discussed in Section~\ref{sec:remarks}.
In particular, Section~\ref{section-action} studies
a transitive affine Weyl group action on the set of arc
permutations, whose resulting Schreier graph is the graph studied in Section~\ref{X-graph}.

\section{Basic concepts}

In the following definitions, an interval in $\bbz$ is a subset $[a,b]=\{a,a+1,\dots,b\}$ for some $a\le b$,
and an interval in $\bbz_n$ is a subset of the form $[a,b]$ or $[b,n]\cup[1,a]$ for some $1\le a\le b\le n$.

\subsection{Unimodal permutations}

\begin{definition} A permutation $\pi\in \S_n$ is  {\em left-unimodal}
if, for every $1\le j\le n$, the first $j$ letters in $\pi$ form
an interval in $\bbz$. Denote by $\LL_n$ the set of left-unimodal permutations in $\S_n$.
\end{definition}

\begin{ex} The permutation $342561$ is left-unimodal, but $3412$ is not. \end{ex}

\begin{claim}\label{claimL}
$|\LL_n|=2^{n-1}$.
\end{claim}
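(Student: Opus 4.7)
The plan is to set up a bijection $\Phi: \LL_n \to \{L,R\}^{n-1}$ and conclude from $|\{L,R\}^{n-1}| = 2^{n-1}$.

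The key structural observation is that if $\pi \in \LL_n$ and the length-$j$ prefix $\pi_1\cdots\pi_j$ is the interval $[a,b] \subseteq [1,n]$, then the next letter $\pi_{j+1}$, which must again extend the prefix to an interval in $\bbz$, is forced to lie in $\{a-1,\,b+1\}$. I would encode the choice at step $j+1$ by a symbol $s_{j+1} \in \{L,R\}$ and define $\Phi(\pi) := (s_2,\ldots,s_n)$. The map $\Phi$ is manifestly well-defined and injective, since the entire permutation is determined once $\pi_1$ and the extension sequence are known, and $\pi_1$ is in turn recoverable from the requirement that the final interval equal $[1,n]$.

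To show $\Phi$ is surjective, I would exhibit its inverse. Given $s \in \{L,R\}^{n-1}$ with $\ell$ letters equal to $L$ and $r = n-1-\ell$ letters equal to $R$, set $\pi_1 := \ell + 1$ and construct $\pi_2,\ldots,\pi_n$ by applying the left/right extension rule dictated by $s$. A short induction shows that after $j$ such steps the current prefix is the interval $[\ell+1-\ell_j,\; \ell+1+r_j]$, where $\ell_j \le \ell$ and $r_j \le r$ are the partial letter counts; these two inequalities keep the interval inside $[1,n]$ at every stage and force it to equal $[1,n]$ when $j = n-1$. Thus the construction produces a genuine element of $\LL_n$ and is inverse to $\Phi$, giving $|\LL_n| = 2^{n-1}$.

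The only mildly nontrivial point is the choice $\pi_1 = \ell + 1$: it is forced by the requirement that the final interval be $[1,n]$, and one must verify it keeps every intermediate prefix inside $[1,n]$. The two inequalities above handle this at once, so there is no real obstacle; the argument is entirely routine.
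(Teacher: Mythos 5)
Your bijection is correct and is essentially the paper's own argument in different clothing: your symbol $s_i$ equals $R$ precisely when $\pi(i)>\pi(1)$, so your map $\Phi$ is the same encoding the paper uses (a left-unimodal permutation is determined by the set of positions $i\ge 2$ with $\pi(i)>\pi(1)$). The extra verification you supply — that $\pi_1=\ell+1$ is forced and that every intermediate prefix stays inside $[1,n]$ — is a fine elaboration of the surjectivity the paper leaves implicit.
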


\begin{proof}
A left-unimodal permutation $\pi$ is uniquely determined by the subset of values $i\in\{2,\dots,n\}$ such that $\pi(i)>\pi(1)$. There are $2^{n-1}$ such subsets.
\end{proof}

We denote by $\D(\pi)$ the descent set of a permutation $\pi$, and
by $\RSK(\pi)=(P,Q)$ the pair of standard Young tableaux
associated to $\pi$ by the RSK correspondence. For a standard Young tableau $T$, its descent set $\D(T)$ is defined as the set of entries $i$ that lie strictly above the row where $i+1$ lies.
It is well known that if $\RSK(\pi)=(P,Q)$, then $\D(\pi)=\D(Q)$ and $\D(\pi^{-1})=\D(P)$.

\begin{remark}\label{knuth1}\rm
A permutation $\pi$ is left-unimodal if and only if
$\D(\pi^{-1})=\{1,2,\dots,i\}$ for some $0\le i\le n-1$.
In other words $\pi\in \LL_n$ if and only if
$\RSK(\pi)=(P,Q)$, where $P$ is a hook with entries 
$1,2,\dots,i+1$ in the first column, and $Q$ is any hook with the
same shape as $P$.
It follows that left-unimodal permutations are a union of Knuth
classes.
\end{remark}

\begin{definition} A permutation $\pi\in \S_n$ is  {\em unimodal}
if one of the following holds: \begin{itemize} \item[(i)] every
prefix forms an interval in $\bbz$; or \item[(ii)]every suffix forms
an interval in $\bbz$.
\end{itemize}
Denote by $\U_n$ the set of unimodal permutations in $\S_n$.
\end{definition}

We remark that our definition of unimodal permutations is slightly different from the one given in~\cite{Gelfand,RIMS}, where unimodal permutations are those whose inverse is left-unimodal in this paper, and in~\cite{Thibon}, where unimodal permutations are those whose inverse is right-unimodal in our terminology.

\begin{ex} The permutation $165243$ is unimodal. \end{ex}

\begin{claim}\label{claimU}
For $n\ge2$, $|\U_n|=2^{n}-2$.
\end{claim}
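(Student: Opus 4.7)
The plan is to prove the claim by inclusion--exclusion applied to the two alternatives in the definition of unimodal. Let $A\subseteq\S_n$ be the set of permutations for which every prefix is an interval of $\bbz$, i.e., $A=\LL_n$, and let $B\subseteq\S_n$ be the set for which every suffix is an interval of $\bbz$. By definition $\U_n=A\cup B$, so
$$|\U_n|=|A|+|B|-|A\cap B|.$$
Claim~\ref{claimL} already gives $|A|=2^{n-1}$. For $|B|$, I would use the reversal map $\pi\mapsto\pi^R$ defined by $\pi^R(i)=\pi(n+1-i)$; as a set, the length-$j$ suffix of $\pi$ coincides with the length-$j$ prefix of $\pi^R$, so this map restricts to a bijection $B\to A$, yielding $|B|=2^{n-1}$.

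The core of the argument is showing $|A\cap B|=2$, with the only members being the identity and its reverse. The key observation is that if $\pi\in A\cap B$, then for every $1\le j\le n-1$ both $\{\pi(1),\dots,\pi(j)\}$ and $\{\pi(j+1),\dots,\pi(n)\}$ are intervals of $\bbz$ whose disjoint union is $\{1,\dots,n\}$; two complementary intervals partitioning $\{1,\dots,n\}$ must be $\{1,\dots,j\}$ and $\{j+1,\dots,n\}$ in some order. Hence each prefix of $\pi$ must equal either $\{1,\dots,j\}$ or $\{n-j+1,\dots,n\}$. In particular $\pi(1)\in\{1,n\}$. For $n\ge3$, a short induction finishes the job: if $\pi(1)=1$, the size-$2$ prefix cannot be $\{n-1,n\}$, forcing $\pi(2)=2$, and continuing in this way $\pi$ is the identity; symmetrically $\pi(1)=n$ forces $\pi$ to be the reverse. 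For $n=2$ both permutations in $\S_2$ satisfy both conditions, so $|A\cap B|=2$ still.

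Combining the three counts gives $|\U_n|=2^{n-1}+2^{n-1}-2=2^n-2$. I do not anticipate a real obstacle; the only points to be careful about are the bijective justification of $|B|=2^{n-1}$ and the handling of $n=2$ in the intersection argument.
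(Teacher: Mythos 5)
Your proof is correct and follows essentially the same route as the paper: both decompose $\U_n$ as the union of the left-unimodal permutations and their reversals, observe that the intersection consists only of $12\dots n$ and $n\dots 21$, and conclude by inclusion--exclusion using $|\LL_n|=2^{n-1}$. Your write-up just supplies more detail on why the intersection has exactly two elements.
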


\begin{proof}
A permutation $\pi\in\S_n$ is unimodal if either $\pi$ or its
reversal $\pi^R=\pi(n)\dots\pi(2)\pi(1)$ is left-unimodal. The
only permutations for which both $\pi$ and $\pi^R$ are
left-unimodal are $12\dots n$ and $n\dots 21$. The formula now
follows from Claim~\ref{claimL}.
\end{proof}

\begin{remark}\label{knuth2}\rm
 A permutation $\pi\in\S_n$ is unimodal if and only if
$$\D(\pi^{-1})=\begin{cases} \{1,2,\dots,i\} \mbox{ or}\\ \{i+1,i+2,\dots,n-1\} \end{cases}$$
for some $1\le i\le n-1$. This happens if and only if
$\RSK(\pi)=(P,Q)$, where $P$ is a hook with entries
$1,2,\dots,i+1$ in the first column or in the first row, and $Q$
is any hook with the same shape as $P$. Thus unimodal permutations
are a union of Knuth classes.
\end{remark}

\subsection{Arc permutations}

\begin{definition} A permutation $\pi\in \S_n$ is an {\em arc permutation}
if, for every $1\le j\le n$, the first $j$ letters in $\pi$ form
an interval in $\bbz_n$ (where
the letter $n$ is identified with zero). Denote by $\A_n$ the set of arc permutations in $\S_n$.
\end{definition}

\begin{ex} The permutation $12543$ is an arc permutation
in $\S_5$, but $125436$ is not an arc permutation in
$\S_6$, since $\{1,2,5\}$ is an interval in $\bbz_5$ but not in
$\bbz_6$.
\end{ex}

\begin{claim}\label{arc-enumeration}
For $n\ge2$, $|\A_n|=n2^{n-2}$.
\end{claim}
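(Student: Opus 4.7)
The plan is to enumerate $\A_n$ by building an arc permutation $\pi$ one letter at a time, using the defining property that every prefix is a cyclic interval in $\bbz_n$. I would argue that there are $n$ choices for the initial letter $\pi(1)$ (any element of $\bbz_n$ is itself a valid one-element interval), exactly two choices for $\pi(j)$ at each intermediate step $2\le j\le n-1$, and no choice at all at $j=n$. Multiplying gives $n\cdot 2^{n-2}\cdot 1=n\cdot 2^{n-2}$.

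The key structural observation underlying the count is the following: if $\{\pi(1),\ldots,\pi(j-1)\}$ is a cyclic interval $[a,b]\subset\bbz_n$ of size $j-1<n$, then $\pi(j)$ must be either $a-1$ or $b+1$ (mod $n$) in order for the new prefix to remain an interval, and these two candidate values are distinct precisely when the current interval has size at most $n-2$, i.e.\ when $j\le n-1$. When $j=n$, the two candidates coincide with the unique remaining element, so $\pi(n)$ is forced. This inductive step is really the whole content of the proof.

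There is no serious obstacle here; the argument is essentially a direct multiplicative enumeration once the ``proper cyclic interval has exactly two cyclic neighbors'' observation is isolated. The only bookkeeping is the boundary case $n=2$, where the intermediate range $2\le j\le n-1$ is empty; the formula still correctly yields $2\cdot 2^{0}=2$, matching both permutations of $\S_2$. A small sanity check against $n=3$ (giving $6$, hence all of $\S_3$) and $n=4$ (giving $16$) further confirms the count.
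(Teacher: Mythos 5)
Your proposal is correct and follows exactly the same multiplicative construction as the paper's proof ($n$ choices for the first letter, two for each intermediate letter, the last forced), just with the "two cyclic neighbors" observation and the $n=2$ boundary case spelled out more explicitly.
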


\begin{proof}
To build $\pi\in\A_n$, there are $n$ choices for $\pi(1)$ and two choices for every other
letter except the last one.
\end{proof}

\begin{remark}\label{knuth3}\rm
Arc permutations are not a union of Knuth classes. Note, however,
that arc permutations may be characterized in terms of descent
sets as follows. A permutation $\pi\in\S_n$ is an arc permutation
if and only if
$$\D(\pi^{-1})=\begin{cases} \{1,2,\dots,i,j+1,j+2,\dots,n-1\} \mbox{ and }\pi^{-1}(1)<\pi^{-1}(n), \quad \mbox{or}\\ \{i+1,i+2,\dots,j\} \mbox{ and }\pi^{-1}(1)>\pi^{-1}(n)\end{cases}$$
for some $i\le j$.
\end{remark}

It is clear from the definition that the sets of left-unimodal,
unimodal and arc permutations satisfy $\LL_n\subset \U_n\subset
\A_n$. We denote by $\Z_n=\A_n\setminus \U_n$ the set of
non-unimodal arc permutations. It follows from
Remarks~\ref{knuth2} and~\ref{knuth3} that $\Z_n$ is not a union
of Knuth classes. However, $\Z_n$ has some surprising Knuth-like
properties, which will be described in Section~\ref{equid}.

\section{Characterization by pattern
avoidance}\label{pattern-section}

In this section the sets of left-unimodal permutations, arc permutations, and unimodal permutations are characterized in
terms of pattern avoidance. Given a set of patterns $\tau_1,\tau_2,\dots$, denote by $\S_n(\tau_1,\tau_2,\dots)$ the set of permutations in $\S_n$ that avoid
all of the $\tau_i$, that is, that do not contain a subsequence whose entries are in the same relative order as those of $\tau_i$.  Define $\A_n(\tau_1,\tau_2,\dots)$ analogously.

\subsection{Left-unimodal permutations}

It will be convenient to use terminology from {\em geometric
grid classes}. Studied by Albert et al.~\cite{AABRV}, a geometric
grid class consists of those permutations that can be drawn on a
specified set of line segments of slope $\pm1$, whose locations are
determined by the positions of the corresponding entries in a matrix $M$ with entries in $\{0,1,-1\}$. 
More precisely, $\G(M)$ is the set of permutations that can be obtained by
placing $n$ dots on the segments in such a way that there are no two dots on the same vertical or horizontal line, labeling the dots with $1,2,\dots,n$ by increasing $y$-coordinate, and then reading
them by increasing $x$-coordinate.  All the geometric
grid classes that we consider in this paper are also profile classes in the sense of Murphy and Vatter~\cite{MV}.

Left-unimodal permutations are those that can be drawn on the picture on the left of Figure~\ref{fig:grid_staircase}, which consists of a segment of slope $1$ above a segment of slope $-1$.
The picture on the right shows a drawing of the permutation $32415$.  The grid class of permutations that can be drawn on this picture is denoted by $$\G\left(\ba{c} 1\\ -1 \ea\right),$$
so we have that $$\LL_n=\G_n\left(\ba{c} 1\\ -1 \ea\right)=\G\left(\ba{c} 1\\ -1 \ea\right)\cap\S_n.$$

\begin{figure}[htb]
\begin{center}
\includegraphics[width=3cm,angle=-90]{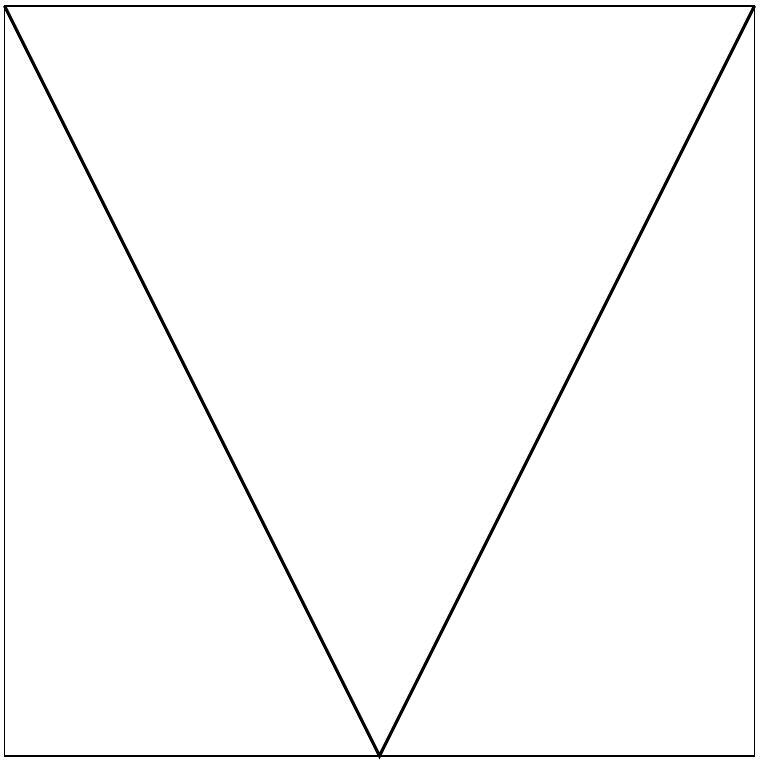}\hspace{2cm}\includegraphics[width=3cm,angle=-90]{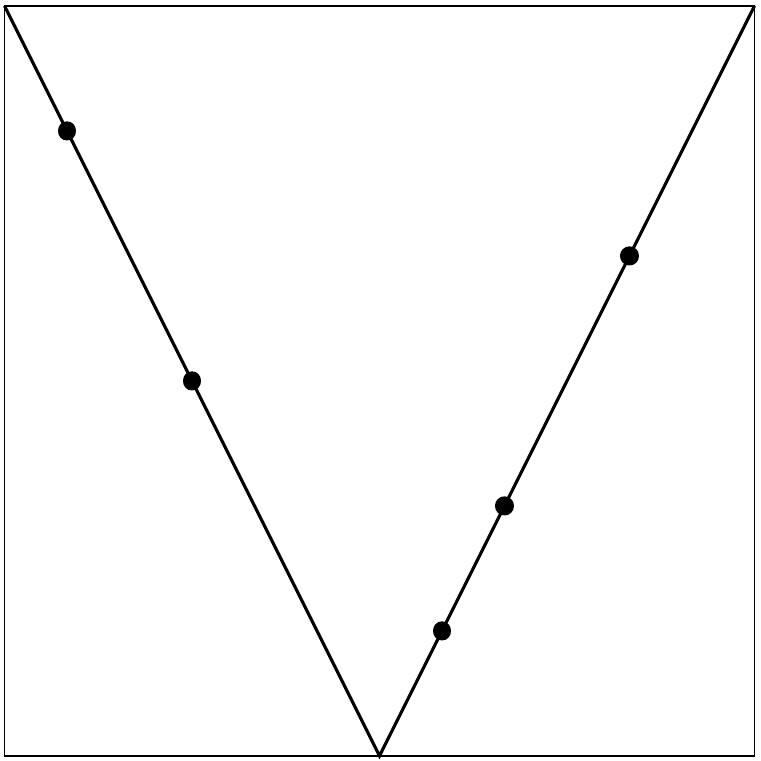}
\caption{\label{fig:grid_staircase} The grid for left-unimodal permutations, and a drawing of the permutation $32415$.}
\end{center}
\end{figure}

It is clear from the description that geometric grid
classes are always closed under pattern containment, so they are characterized by the set of minimal forbidden patterns. In the case of left-unimodal permutations, we get the following description.

\begin{claim}
$\LL_n=\S_n(132,312)$.
\end{claim}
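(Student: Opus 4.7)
The plan is to prove both containments directly from the definition of left-unimodality, using the observation that a prefix $\pi(1),\ldots,\pi(j)$ fails to be an interval in $\bbz$ precisely when there is some value $v$ strictly between $\min_{i\le j}\pi(i)$ and $\max_{i\le j}\pi(i)$ that appears only in a later position of $\pi$.

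For the forward inclusion $\LL_n\subseteq\S_n(132,312)$, I would argue by contrapositive. Suppose $\pi$ contains a $132$ pattern in positions $i_1<i_2<i_3$, with values $\pi(i_1)<\pi(i_3)<\pi(i_2)$. Then the prefix $\pi(1),\ldots,\pi(i_2)$ contains both $\pi(i_1)$ and $\pi(i_2)$, so if it were an interval in $\bbz$ it would also contain the intermediate value $\pi(i_3)$; but $\pi(i_3)$ only appears at position $i_3>i_2$, a contradiction. The case of a $312$ pattern is symmetric: now $\pi(i_2)<\pi(i_3)<\pi(i_1)$, and the same prefix reaches from $\pi(i_2)$ up to $\pi(i_1)$ but misses $\pi(i_3)$.

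For the reverse inclusion $\S_n(132,312)\subseteq\LL_n$, I would take $\pi\notin\LL_n$ and produce one of the forbidden patterns. Choose the smallest $j$ such that $\{\pi(1),\ldots,\pi(j)\}$ is not an interval in $\bbz$, and let $m=\min_{i\le j}\pi(i)=\pi(a)$ and $M=\max_{i\le j}\pi(i)=\pi(b)$. There is some $v$ with $m<v<M$ that does not appear in the prefix, so $v=\pi(i_3)$ for some $i_3>j$. If $a<b$ then $(\pi(a),\pi(b),\pi(i_3))$ is a $132$ pattern, and if $b<a$ then $(\pi(b),\pi(a),\pi(i_3))$ is a $312$ pattern.

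I expect no real obstacle here; the only thing to watch is the case split on whether the minimum or the maximum of the prefix comes first, which determines which of the two patterns $132$ or $312$ is produced. (Alternatively, one could appeal to the geometric grid picture together with the Albert--Atkinson--Bouvel--Ruskuc--Vatter characterization of grid classes by finite bases, but the direct argument above is shorter and self-contained.)
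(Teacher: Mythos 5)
Your proof is correct and is essentially the paper's argument, just written out in full: the paper states in one line that a prefix fails to be an interval exactly when some later value falls strictly between two earlier ones, which is precisely the occurrence of $132$ or $312$ that you construct explicitly in both directions. No gaps; the case split on whether the prefix minimum or maximum occurs first is handled correctly.
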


\begin{proof}
The condition that every prefix of $\pi$ is an interval in $\bbz$ is equivalent to the condition that there is no
pattern $\pi(i)\pi(j)\pi(k)$ (with $i<j<k$) where the value of $\pi(k)$ is between
$\pi(i)$ and $\pi(j)$, that is, $\pi$ avoids $132$ and
$312$.
\end{proof}

\subsection{Arc permutations}

Arc permutations can be characterized in terms of pattern avoidance, as those permutations avoiding the eight patterns $\tau\in\S_4$
with $|\tau(1)-\tau(2)|=2$.

\begin{theorem}\label{arc-pattern-thm}
$$\A_n=\S_n(1324,1342,2413,2431,3124,3142,4213,4231).$$
\end{theorem}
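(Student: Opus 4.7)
The plan is to prove both inclusions separately. For the easier inclusion $\A_n \subseteq \S_n(1324,1342,2413,2431,3124,3142,4213,4231)$, I will argue the contrapositive: if $\pi$ contains any of the eight patterns at positions $i_1 < i_2 < i_3 < i_4$, then the prefix $P_{i_2} := \{\pi(1),\ldots,\pi(i_2)\}$ fails to be an interval in $\bbz_n$. The hypothesis $|\tau(1)-\tau(2)|=2$ guarantees that among the four values of the occurrence there is a ``middle'' value $v$ strictly between $\pi(i_1)$ and $\pi(i_2)$, and an ``outside'' value $u$ lying either below $\min(\pi(i_1),\pi(i_2))$ or above $\max(\pi(i_1),\pi(i_2))$; moreover both $u,v$ occur at positions strictly greater than $i_2$. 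Since $P_{i_2}$ contains $\pi(i_1),\pi(i_2)$ but not $v$, it cannot be a linear interval in $\bbz_n$, so it must be a wrap-around interval; but any such wrap-around interval necessarily contains every value outside $[\min,\max]$---in particular $u$---contradicting $u \notin P_{i_2}$.

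For the reverse inclusion, I will assume $\pi \notin \A_n$ and let $j$ be the smallest index such that $P_j$ is not an interval. Then $P_{j-1}$ is an interval, and $\pi(j)$ is neither in $P_{j-1}$ nor cyclically adjacent to it. Writing $v_1,v_2$ for the two cyclic neighbors of $P_{j-1}$ in $\bbz_n$ (they occur in $\pi$ at positions $k_1,k_2>j$ since $\pi(j)\notin\{v_1,v_2\}$) and $e_1,e_2$ for the corresponding cyclic endpoints of $P_{j-1}$ (at positions $p_1,p_2<j$), I plan to produce the forbidden pattern from a suitable four-element subset of $\{p_1,p_2,j,k_1,k_2\}$. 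The case analysis is on the shape of $P_{j-1}$: either a linear interval $[m,M]$ with subcases $m=1$, $M=n$, or $1<m\le M<n$, and further on whether $\pi(j)<m-1$ or $\pi(j)>M+1$; or a proper wrap-around interval $[b,n]\cup[1,a]$. For example, in the generic linear case with $\pi(j)<m-1$ the positions $p_1,j,k_1,k_2$ carry the values $m,\pi(j),m-1,M+1$ with ranks $3,1,2,4$, yielding $3124$ or $3142$ depending on the order of $k_1,k_2$; the symmetric sub-case $\pi(j)>M+1$ produces $2413$ or $2431$; the edge cases $m=1$ and $M=n$ produce $\{1324,1342\}$ and $\{4213,4231\}$ respectively; and the wrap-around case produces $\{1324,1342\}$ (or $\{4213,4231\}$, by choosing the opposite endpoint).

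The main obstacle I anticipate is ensuring that each case of the converse produces a pattern from the forbidden list and not some innocuous four-pattern such as $3412$ or $1423$. The key point is that one must pair $\pi(j)$ with the endpoint of $P_{j-1}$ on the side where $\pi(j)$ actually sits; this choice is precisely what forces $|\tau(1)-\tau(2)|=2$ in the resulting pattern, and then the two possible orderings of $k_1,k_2$ deterministically pick out the two patterns in the corresponding family. The degenerate sub-cases---$j=2$ (where $p_1=p_2=1$) and the wrap-around boundary cases $a=1$ or $b=n$---require no essential modification: the same four-tuple still produces the same patterns via the same rank computation.
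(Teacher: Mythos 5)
Your proof is correct and follows essentially the same two-step strategy as the paper: the forward inclusion via the observation that $|\tau(1)-\tau(2)|=2$ forces one of the later values to lie strictly between the first two and another to lie outside (so a prefix containing the first two but neither of the others can contain neither the inner nor the outer cyclic arc), and the converse via the minimal non-interval prefix. The only real difference is the witness quadruple in the converse: the paper takes $\pi(i-1)$, $\pi(i)$, and the positions of the two values $\ol{\pi(i)-1},\ol{\pi(i)+1}$ (three consecutive values plus one more), which identifies the forbidden pattern uniformly with no case split on the shape of the prefix interval, whereas your choice of an endpoint of $P_{j-1}$ together with the two cyclic neighbors of $P_{j-1}$ necessitates the case analysis you outline---which does check out in every case.
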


\begin{proof}
For an integer $m$, denote by $\ol{m}$ the element of
$\{1,2,\dots,n\}$ that is congruent with $m$ mod $n$.
Let $\pi\in\S_n$, and suppose that $\pi\notin\A_n$. Let $i>1$ be
the smallest number with the property that $\{\pi(1),\pi(2),\dots,\pi(i)\}$
is not an interval in $\bbz_n$. By minimality of $i$, the set
$\{\pi(1),\pi(2),\dots,\pi(i-1)\}$ contains neither $\ol{\pi(i)+1}$
nor $\ol{\pi(i)-1}$. Letting $j<k$ be such that
$\{\pi(j),\pi(k)\}=\{\ol{\pi(i)+1},\ol{\pi(i)-1}\}$, it follows
that $\pi(i-1)\pi(i)\pi(j)\pi(k)$ is an occurrence of one of the
eight patterns above.

Conversely, if $\pi\in\S_n$ contains one of the eight patterns,
let $\pi(h)\pi(i)\pi(j)\pi(k)$ be such an occurrence, where $h<i<j<k$.
Then $\{\pi(1),\pi(2),\dots,\pi(i)\}$ is not an interval in $\bbz_n$.

\end{proof}

\begin{corollary}
 $|\S_n(1324,1342,2413,2431,3124,3142,4213,4231)|=n2^{n-2}$ for $n\ge2$.
\end{corollary}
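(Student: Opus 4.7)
The plan is simply to combine the two preceding results. Theorem~\ref{arc-pattern-thm} identifies the set $\S_n(1324,1342,2413,2431,3124,3142,4213,4231)$ with the set $\A_n$ of arc permutations, and Claim~\ref{arc-enumeration} already establishes that $|\A_n|=n2^{n-2}$ for $n\ge 2$. So the corollary follows immediately by chaining the two equalities
\[
|\S_n(1324,1342,2413,2431,3124,3142,4213,4231)| \;=\; |\A_n| \;=\; n2^{n-2}.
\]

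Since both inputs are already proved, there is no real obstacle; the only thing worth noting is that Claim~\ref{arc-enumeration}'s counting argument is entirely self-contained and bijective (a free choice of $\pi(1)\in\{1,\dots,n\}$, followed by two choices — extend on the left or right end of the current cyclic interval — for each of the $n-2$ positions $\pi(2),\dots,\pi(n-1)$, with $\pi(n)$ forced). I would simply record this as a one-line proof, for instance: \emph{Combine Theorem~\ref{arc-pattern-thm} with Claim~\ref{arc-enumeration}.}
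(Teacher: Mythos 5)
Your proof is correct and matches the paper's (implicit) argument exactly: the corollary is stated without proof precisely because it is the immediate combination of Theorem~\ref{arc-pattern-thm} and Claim~\ref{arc-enumeration}. Your added remark that the count in Claim~\ref{arc-enumeration} comes from choosing $\pi(1)$ freely and then extending the cyclic interval on either end is also the paper's counting argument.
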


Arc permutations can also be described in terms of grid classes, as those permutations that can be
drawn on one of the two pictures in Figure~\ref{fig:grid_arc}. We write
$$\A_n=\G_n\left(\begin{array}{cc}
           1 & 0 \\
           -1 & 0 \\
           0 & -1 \\
           0 & 1 \\
         \end{array}
       \right)\ \cup\
\G_n\left(\begin{array}{cc}
           0 & -1 \\
           0 & 1 \\
           1 & 0 \\
           -1 & 0 \\
         \end{array}
       \right).$$

\begin{figure}[htb]
\begin{center}
\includegraphics[width=4cm,angle=-90]{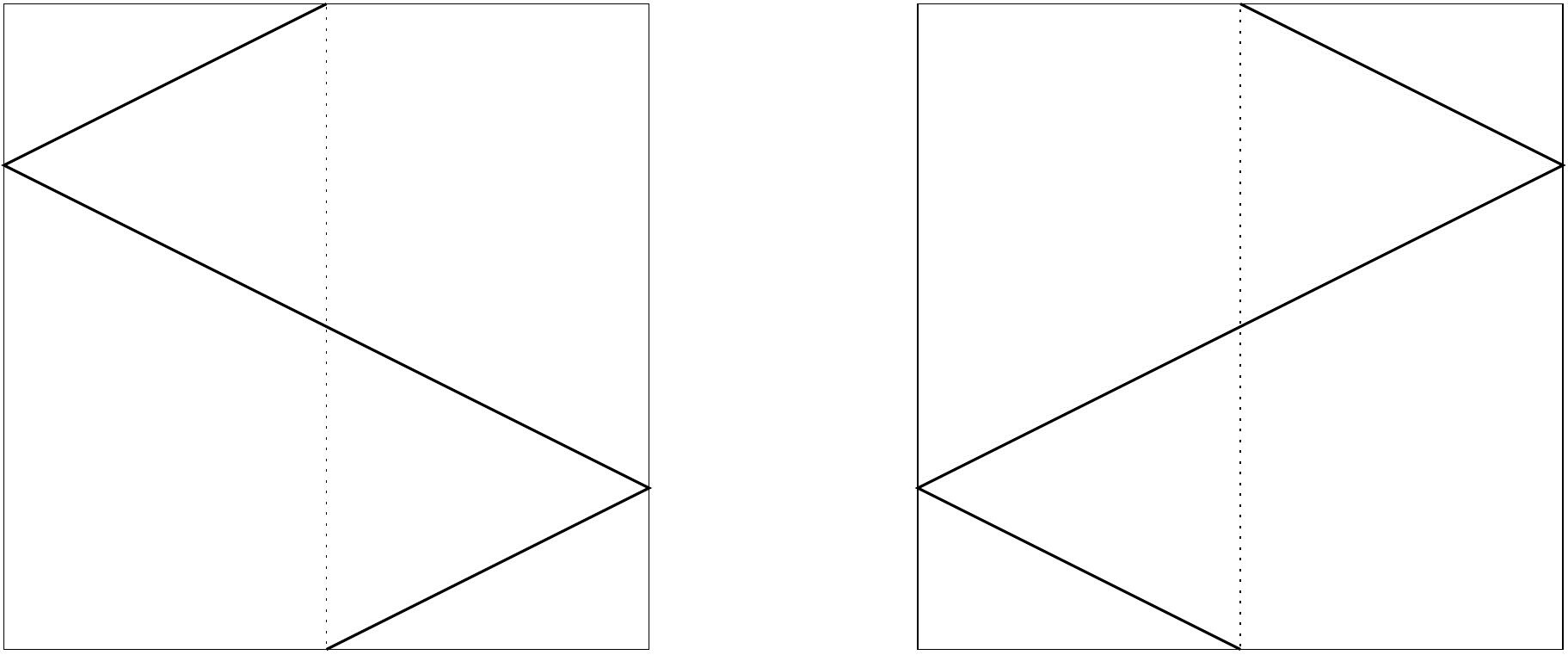}
\caption{\label{fig:grid_arc} Grids for arc permutations.}
\end{center}
\end{figure}

\subsection{Unimodal permutations}

In terms of grid classes, unimodal permutations are those that can be drawn on
one of the two pictures in Figure~\ref{fig:grid_arc_geo}, that is,
$$\U_n=\G_n\left(\begin{array}{c}
           1 \\
           -1 \\
         \end{array}
\right)\ \cup \ \G_n\left(\begin{array}{c}
           -1 \\
           1 \\
         \end{array}
\right).$$
\begin{figure}[htb]
\begin{center}
\includegraphics[width=3cm,angle=-90]{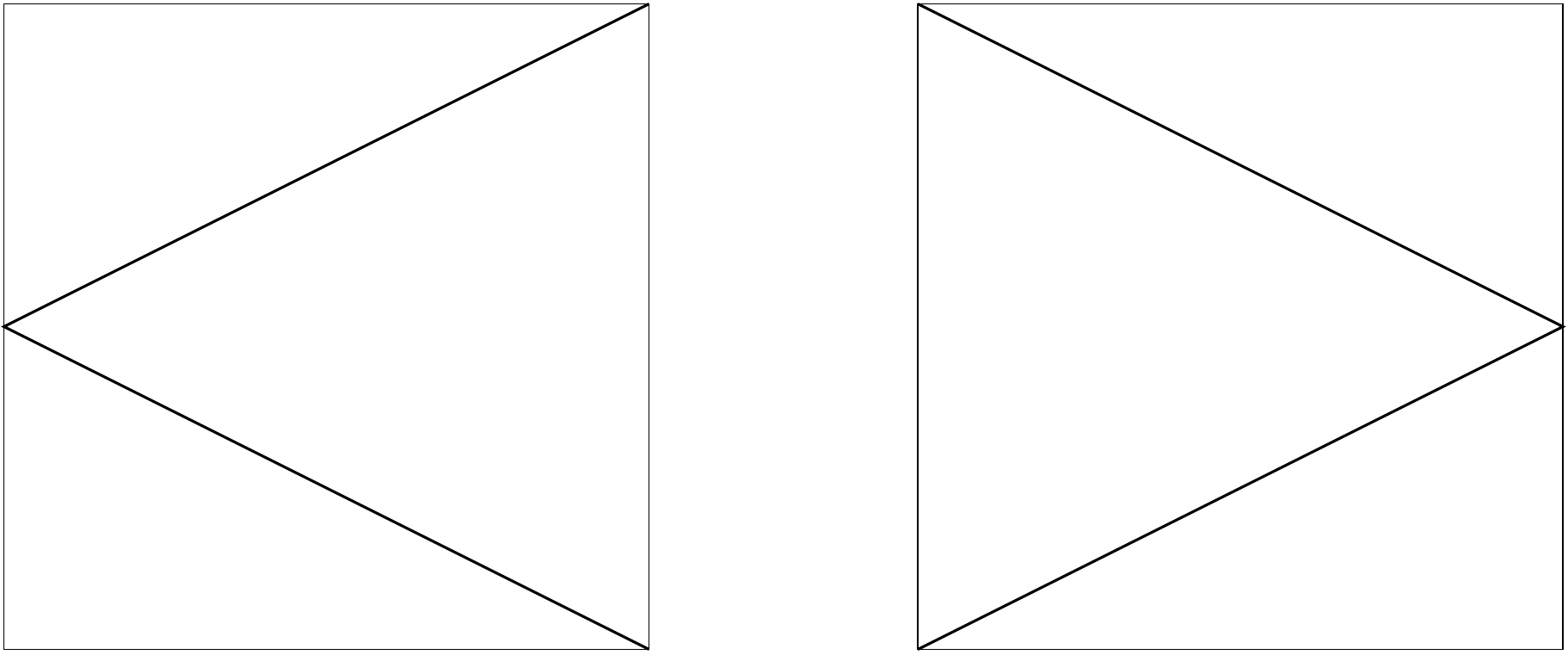}
\caption{\label{fig:grid_arc_geo} Grids for unimodal arc
permutations.}
\end{center}
\end{figure}

Next we characterize unimodal permutations in terms of pattern avoidance.

\begin{proposition}\label{unimodal-pattern}
$$\U_n=\A_n(2143,3412)
=\S_n(1324,1342,2143,2413,2431,3124,3142,3412,4213,4231).
$$
\end{proposition}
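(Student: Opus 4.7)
The second equality follows at once from the first combined with Theorem~\ref{arc-pattern-thm}, which identifies $\A_n$ with $\S_n(1324,1342,2413,2431,3124,3142,4213,4231)$; the content is therefore the first equality $\U_n=\A_n(2143,3412)$.

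For the inclusion $\U_n\subseteq\A_n(2143,3412)$, since $\U_n\subseteq\A_n$ has already been noted, it suffices to check that no unimodal permutation contains $2143$ or $3412$. In a left-unimodal $\pi$, each prefix is an interval in $\bbz$, so consecutive prefixes differ by extending the current interval by one unit at its minimum or at its maximum; this implies that the entries greater than $\pi(1)$ appear in increasing order and those smaller than $\pi(1)$ in decreasing order. A hypothetical occurrence of $2143$ or $3412$ would therefore have to split its four entries between one increasing chain and one decreasing chain, which a short case check—stratifying according to the position of $\pi(1)$ among the four values—rules out. The right-unimodal case follows from the fact that reversal interchanges the patterns $2143$ and $3412$.

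For the reverse inclusion I would argue the contrapositive: every $\pi\in\A_n\setminus\U_n$ contains $2143$ or $3412$. Set $p=\pi^{-1}(1)$ and $q=\pi^{-1}(n)$, and suppose first that $p<q$. The key structural observation is that the prefix $\{\pi(1),\dots,\pi(p)\}$, being an arc of $\bbz_n$ that contains $1$ but not $n$, must equal $\{1,2,\dots,p\}$; in particular $\pi(p)=1$, and iterating the arc-extension argument forces $\pi(p+j)=p+j$ for $1\le j<q-p$ together with $\pi(q)=n$. The degenerate endpoints are handled by a companion observation: if $p=1$ then every suffix of $\pi$ omits $1$, and an arc of $\bbz_n$ omitting $1$ is automatically an interval in $\bbz$, making $\pi$ right-unimodal; dually $q=n$ forces left-unimodality. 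The non-unimodality hypothesis therefore gives $2\le p$ and $q\le n-1$, and the four positions $1<p<q<q+1$ carry values $\pi(1)\in\{2,\dots,p\}$, $1$, $n$, and $\pi(q+1)\in\{q,n-1\}$, which satisfy $1<\pi(1)\le p<q\le\pi(q+1)<n$; their relative order is $2143$. The case $q<p$ reduces to this one via the complement $\pi\mapsto n+1-\pi$, which preserves both $\A_n$ and $\U_n$ and sends $2143$ to $3412$.

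I expect the main obstacle to be the structural lemma that pins down the entries $\pi(p),\dots,\pi(q)$ in an arc permutation with $p<q$, together with the degenerate-case reductions at $p\in\{1,n\}$ or $q\in\{1,n\}$; once these are in place, the explicit pattern witness at positions $1,p,q,q+1$ and the symmetry reduction under complement make both inclusions essentially routine.
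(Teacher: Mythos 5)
Your proof is correct, and it takes a genuinely different route from the paper's. The paper proves the hard inclusion by appealing to the grid-class picture of $\A_n$ (Figure~\ref{fig:grid_arc}): a non-unimodal arc permutation drawn on one of the two grids must have dots on both "extra" slopes, and the witness occurrence is read off at positions $1,i,j,n$ using the first and last entries. You instead work directly from the interval condition: you pin down the structure of $\pi$ between the positions $p=\pi^{-1}(1)$ and $q=\pi^{-1}(n)$ (the prefix up to $p$ is $\{1,\dots,p\}$, then $\pi(p+j)=p+j$ until $n$ appears), dispose of the degenerate cases $p=1$ and $q=n$ by showing they force unimodality, and exhibit the witness at positions $1,p,q,q+1$; the second case is handled by complementation rather than by the paper's "analogous argument." Your treatment of the easy inclusion (decomposing a left-unimodal permutation as a merge of an increasing and a decreasing chain and checking that neither $2143$ nor $3412$ admits such a merge) is also more explicit than the paper's one-line "it is clear." Both arguments share the same underlying dichotomy ($\pi^{-1}(1)$ versus $\pi^{-1}(n)$, equivalently the two grids), but yours is self-contained and does not rely on the geometric drawing of arc permutations, at the cost of the extra structural lemma and the endpoint case analysis; all of those steps check out.
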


\begin{proof} If $\pi$ contains $2143$ or $3412$, then it is clear
that $\pi$ is not unimodal. For the converse, we show that
every arc permutation $\pi\in\A_n$ that is not unimodal must
contain one of the patterns $2143$ or $3412$. Since $\pi\in\A_n$,
it can be drawn on one of the two pictures in
Fig.~\ref{fig:grid_arc}. Suppose it can be drawn on the left
picture. Since $\pi$ is not unimodal, any drawing of $\pi$ on the
left picture requires some element $\pi(i)$ with $i>1$ to be on the
first increasing slope, and some
element $\pi(j)$ with $j<n$ to be on the second increasing slope.
Then $\pi(1)\pi(i)\pi(j)\pi(n)$ is an occurrence of $3412$. An
analogous argument shows that if $\pi$ can be drawn on the right
picture in Fig.~\ref{fig:grid_arc} but it is not unimodal, then it
contains $2143$.
\end{proof}

\begin{corollary}\label{unimodal-number} $|\S_n(1324,1342,2143,2413,2431,3124,3142,3412,4213,4231)|=2^n-2$ for $n\ge2$.
\end{corollary}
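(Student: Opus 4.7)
The plan is essentially to observe that this corollary is an immediate bookkeeping consequence of two results already established in the paper, so no new argument is needed beyond citing them and combining their conclusions.

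First, I would invoke Proposition~\ref{unimodal-pattern}, which identifies the set $\U_n$ of unimodal permutations with $\S_n(1324,1342,2143,2413,2431,3124,3142,3412,4213,4231)$. In particular, the two sets have the same cardinality for every $n$. Second, I would invoke Claim~\ref{claimU}, which states that $|\U_n|=2^n-2$ for $n\ge 2$. Chaining these two equalities gives the desired count.

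The only thing worth spelling out is that Proposition~\ref{unimodal-pattern} is stated as a set equality, not merely as an enumerative equivalence, so the cardinalities of the two sides automatically agree for each fixed~$n$; there is nothing to verify about sizes separately. The bound $n\ge 2$ in the claim simply transfers to the corollary (for $n=1$ the set $\S_1$ is a single permutation, avoiding every pattern vacuously, while $2^1-2=0$).

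There is no genuine obstacle here, since both ingredients have already been proved. If anything, the only pedagogical choice is whether to write the proof as a single sentence (\textquotedblleft Combine Proposition~\ref{unimodal-pattern} with Claim~\ref{claimU}\textquotedblright) or to restate the chain of equalities
\[
|\S_n(1324,1342,2143,2413,2431,3124,3142,3412,4213,4231)| \;=\; |\U_n| \;=\; 2^n-2
\]
for the reader's convenience; I would opt for the latter.
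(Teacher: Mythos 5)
Your proposal is correct and matches the paper's (implicit) argument exactly: the corollary follows by combining the set equality of Proposition~\ref{unimodal-pattern} with the count $|\U_n|=2^n-2$ from Claim~\ref{claimU}. Nothing further is needed.
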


\section{Prefixes associated to the shifted staircase shape}\label{section-prefixes}

Consider the shifted staircase shape $\Delta_n$ with rows labeled
$1,2,\dots,n-1$ from top to bottom, and columns labeled
$2,3,\dots,n$ from left to right. Given a filling with the numbers
from $1$ to $n(n-1)/2$, with increasing entries in each row and
column, erase the numbers greater than $k$, for some $k$,
obtaining a {\em partial filling} of $\Delta_n$. For each of the
remaining entries $1\le r\le k$, if $r$ lies in row $i$ and
column $j$, let $t_r$ be the transposition $(i,j)$. Associate
to the partial filling the permutation $\pi=t_1t_2\dots t_k$, with multiplication from the right.

\begin{ex} The partial filling
$$\bt{rl} {\small\bt{c}\vspace{2.3mm}\\1\\ 2\\ 3\\ 4\\ 5\\ 6\et}\hspace*{-4mm}& \bt{c}\hspace{1.7mm}{\small2\hs 3\hs 4\hs 5\hs 6\hs 7} \vspace{1.5mm} \\
\young(12368\blank,:459\ten\blank,::7\blank\blank\blank,:::\blank\blank\blank,::::\blank\blank,:::::\blank)\et\et$$
corresponds to the product of transpositions
$$(1,2)(1,3)(1,4)(2,3)(2,4)(1,5)(3,4)(1,6)(2,5)(2,6)=4356217.$$
\end{ex}

\begin{theorem}\label{prefixes-theorem}
The set of permutations obtained as products of transpositions
associated to a partial filling of the shifted staircase shape
$\Delta_n$ is exactly $\LL_n$.
\end{theorem}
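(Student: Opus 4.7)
The plan is to interpret each partial filling as an order ideal $C$ of the poset $\Delta_n$ (where $(i,j)\le(i',j')$ iff $i\le i'$ and $j\le j'$) equipped with a linear extension, and to prove by induction on $|C|$ that for every order ideal $C$ the product $\pi_C:=\prod_{(i,j)\in C}(i,j)$, taken in any linear extension order, is a well-defined element of $\LL_n$ whose value-inversion set $\mathrm{Inv}(\pi_C):=\{(a,b):a<b,\ \pi_C^{-1}(a)>\pi_C^{-1}(b)\}$ equals $C$. Well-definedness holds because two incomparable cells of $\Delta_n$ have disjoint index sets and so their transpositions commute, and any two linear extensions of a finite poset are connected by swaps of adjacent incomparable elements.

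For the inductive step, take a maximal element $(i,j)$ of $C$ and set $C':=C\setminus\{(i,j)\}$; by induction $\sigma:=\pi_{C'}\in\LL_n$ satisfies $\mathrm{Inv}(\sigma)=C'$. The heart of the argument is the structural claim that the values $i$ and $j$ occupy consecutive positions in the one-line notation of $\sigma$, with $i$ immediately preceding $j$. Granted this, right-multiplication by $(i,j)$ swaps the values $i$ and $j$, which sit at two adjacent positions, so only the pair $(i,j)$ changes inversion status and exactly one prefix of the one-line notation is altered as a set: the prefix containing $i$ but not $j$ now contains $j$ but not $i$, so its underlying set changes from $\{i,i+1,\dots,j-1\}$ to $\{i+1,\dots,j\}$. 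Both are intervals, and all other prefixes are unaffected as sets, so $\pi_C\in\LL_n$ and $\mathrm{Inv}(\pi_C)=C$.

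To prove the adjacency claim, encode $\sigma\in\LL_n$ by $m:=\sigma(1)$ together with the increasing sequences $D_1<D_2<\cdots$ and $U_1<U_2<\cdots$ of positions at which the prefix of $\sigma$ is extended downward and upward, so that $\sigma(D_r)=m-r$ and $\sigma(U_s)=m+s$. A direct check shows $\mathrm{Inv}(\sigma)$ consists of all pairs $(a,b)$ with $b\le m$ together with those pairs $a<m<b$ for which $D_{m-a}>U_{b-m}$. Translating the hypothesis that $(i,j)$ is addable to $C'$ but not yet in $C'$---after separately handling the boundary case $i=m,\ j=m+1$, where the adjacency reduces to $\sigma(1)=m,\ \sigma(2)=m+1$---yields the chain $U_{j-m-1}<D_{m-i}<U_{j-m}<D_{m-i+1}$, which forces $D_{m-i}+1=U_{j-m}$. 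Hence $\sigma(D_{m-i})=i$ and $\sigma(D_{m-i}+1)=j$ sit at consecutive positions.

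Finally, the same case analysis applied to an arbitrary $\pi\in\LL_n$ shows that $\mathrm{Inv}(\pi)$ is always an order ideal of $\Delta_n$, and hence $\pi=\pi_{\mathrm{Inv}(\pi)}$ is realized by a partial filling; this gives the reverse inclusion $\LL_n\subseteq\{\pi_C : C\text{ an order ideal}\}$ and completes the proof. The main obstacle is the bookkeeping in the structural claim: one must keep careful track of the boundary cases where $(i,j-1)$ or $(i-1,j)$ fall outside $\Delta_n$ or lie in the region $\{a\ge m\}$ where no inversions reside, and verify in each case that the conclusion of adjacency of $i$ and $j$ in $\sigma$ still holds.
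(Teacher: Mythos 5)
Your proof is correct. It shares the skeleton of the paper's argument---first reduce to the underlying set of filled boxes, an order ideal of $\Delta_n$, using that transpositions attached to incomparable cells have disjoint supports and hence commute; then induct on the number of boxes---but the key lemma is different. The paper's induction tracks the reading word of the lattice path separating filled from unfilled boxes: adding a box flips a corner of the path, transposing two adjacent letters of the reading word, and the reading words of such paths are visibly the left-unimodal permutations. You instead track the inversion set, proving $\operatorname{Inv}(\pi_C)=C$ by showing that the appended transposition $(i,j)$ always acts as a swap of two values sitting in adjacent positions. The two invariants are equivalent (the path determines the shape, which is the inversion set), but your route makes the induction step heavier: the adjacency of $i$ and $j$ must be extracted from the chain $U_{j-m-1}<D_{m-i}<U_{j-m}<D_{m-i+1}$, and you should say explicitly that this same chain pins down the critical prefix as exactly $\{i,\dots,j-1\}$ rather than some larger interval $[a,j-1]$ with $a<i$ (for $i\ge 2$ this also follows from $(i-1,j)\in C'$, which forces $i-1$ to appear after $j$); without that, the swapped prefix would fail to be an interval and the inductive conclusion $\pi_C\in\LL_n$ would not follow from adjacency alone. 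What your version buys is the clean identity $\operatorname{Inv}(\pi)=\shape(\pi)$ for $\pi\in\LL_n$, which makes the reverse inclusion a statement about inversion sets of left-unimodal permutations being order ideals and immediately gives $\ell(\pi)=|\shape(\pi)|$; the paper's path description delivers both inclusions more transparently but leaves the inductive verification to the reader.
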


\begin{proof} The first observation is that if two boxes in the tableau
are in different rows and columns, the associated transpositions
commute. It follows that the resulting permutation depends only on
what boxes of the tableaux are filled, but not on the order in
which they were filled. For example, the partial filling
$$\bt{rl} {\small\bt{c}\vspace{2.3mm}\\1\\ 2\\ 3\\ 4\\ 5\\ 6\et}\hspace*{-4mm}& \bt{c}\hspace{1.7mm}{\small2\hs 3\hs 4\hs 5\hs 6\hs 7} \vspace{1.5mm} \\
\young(12345\blank,:6789\blank,::\ten\blank\blank\blank,:::\blank\blank\blank,::::\blank\blank,:::::\blank)\et\et$$
yields again the permutation $4356217$, just as the partial
filling in the above example, since both have the same set of
filled boxes.

We claim that, from the set of filled boxes, the corresponding
permutation can be read as follows. Let $i$ be the largest such
that the box $(i,i+1)$ is filled. Then, starting at the
bottom-left corner of that box, consider the path with north and
east steps (along the edges of the boxes of the tableau) that
separates the filled and unfilled boxes, ending at the top-right
corner. At each east step, read the label of the corresponding
column, and at each north step, read the label of the
corresponding row. This claim can be easily proved by induction on
the number of filled boxes. The permutations obtained by reading
the labels of  such paths are precisely the left-unimodal
permutations.
\end{proof}

The above proof gives a bijection between $\LL_n$ and the set of
shifted shapes of size at most $\binom{n}{2}$, which consist of the filled boxes in partial fillings.

\begin{definition}\label{def:shape} For $\pi\in \LL_n$, denote by $\shape(\pi)$ the shifted shape corresponding to any partial filling of $\Delta_n$ associated to $\pi$.
\end{definition}

\section{The weak order on $\U_n$}\label{section-order}

\subsection{A criterion for domination}

Let $\ell(\cdot)$ be the length function on the symmetric group
$\S_n$ with respect to the Coxeter generating set
$\s:=\{\sigma_i:\ 1\le i\le n-1\}$, where $\sigma_i$ is identified
with the adjacent transposition $(i,i+1)$.
Recall the
definition of the (right) weak order on
$\S_n$: for every pair $\pi,\tau\in \S_n$, $\pi\le \tau$ if
and only if $\ell(\pi)+\ell(\pi^{-1}\tau)=  \ell(\tau)$.
Denote this poset by $\W(\S_n)$. Recall that $\W(\S_n)$ is a
lattice, which is not modular. First, we give a combinatorial
criterion for weak domination of unimodal permutations.

The concept of shifted shape from Definition~\ref{def:shape} can
be extended to all unimodal permutations as follows: for
$\pi\in \U_n\setminus\LL_n$ let $\shape(\pi):=\shape(w_0\pi w_0)$,
where $w_0$ denotes the longest permutation $n\dots 21$, which is
the maximum in $\W(\U_n)$.  Denote by
$e=12\dots n$ the identity permutation, which is the minimum in $\W(\U_n)$.
Note that $w_0 \LL_n w_0=\U_n\setminus \LL_n\cup \{e, w_0\}$.

\begin{theorem}\label{weak_domination}
For every pair $\pi, \tau\in \U_n$, $\pi\le \tau$ in
$\W(\S_n)$ if and only if
\begin{itemize}
\item[(i)] either $\pi,\tau\in \LL_n$ or $\pi,\tau\in w_0 \LL_n w_0$, and
\item[(ii)] $\shape(\pi)\subseteq \shape(\tau)$.
\end{itemize}
\end{theorem}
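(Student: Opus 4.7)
The plan is to prove a Key Lemma identifying the left inversion set of each $\pi\in\LL_n$ with $\shape(\pi)$, and then to transfer the weak-order comparison to shape containment via the standard characterization of right weak order by inversion-set inclusion. I identify the index set $\{(a,b):1\le a<b\le n\}$ of left inversions with the set of boxes of $\Delta_n$, so that left inversions and shape boxes live in the same ambient set.

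The Key Lemma states that for every $\pi\in\LL_n$, $\{(a,b):a<b,\ \pi^{-1}(a)>\pi^{-1}(b)\}=\shape(\pi)$. I will prove it by induction on $|\shape(\pi)|$. For the inductive step, pick a corner box $(i,j)$ of $\shape(\pi)$ and let $\sigma\in\LL_n$ be the permutation with $\shape(\sigma)=\shape(\pi)\setminus\{(i,j)\}$, so that $\pi=\sigma\cdot(i,j)$; under the convention of Theorem~\ref{prefixes-theorem}, this multiplication swaps the values $i$ and $j$ in the one-line notation of $\sigma$. The crux is that these two values are \emph{adjacent} in $\sigma$, with $i$ immediately preceding $j$. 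This is read off directly from the path-reading description in the proof of Theorem~\ref{prefixes-theorem}: addability of $(i,j)$ forces $(i-1,j)$ and $(i,j-1)$ to be filled whenever they lie in $\Delta_n$, so the path for $\sigma$ must traverse the NW corner of box $(i,j)$ via two consecutive steps ``N at row $i$, then E at column $j$'' (reading exactly $i$ followed by $j$). Adding the box replaces this by ``E at column $j$, then N at row $i$'' (reading $j,i$), and swapping an adjacent ascent into a descent changes the left inversion set by exactly the pair $(i,j)$. A short case check handles the boundary cases $i=1$, $j=n$, and the diagonal $j=i+1$ (in which the starting point of the path also shifts).

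With the lemma, the $\LL_n$ case of the theorem is immediate: $\pi\le\tau$ iff the two left inversion sets nest iff the two shapes nest. The $w_0\LL_n w_0$ case reduces to it through the weak-order automorphism $\pi\mapsto w_0\pi w_0$ (a composition of the standard anti-automorphisms $\pi\mapsto w_0\pi$ and $\pi\mapsto\pi w_0$), which is precisely why $\shape$ was extended to $w_0\LL_n w_0$ by conjugation. For the ``side'' condition (i), suppose $\pi\in\LL_n\setminus\{e,w_0\}$ and $\tau\in w_0\LL_n w_0\setminus\{e,w_0\}$: the Key Lemma puts the minimum box $(1,2)$ of $\Delta_n$ in the left inversion set of $\pi$ but leaves the maximum box $(n-1,n)$ out, while the conjugation formula $(a,b)\in{\rm Inv}(\sigma^{-1})\Leftrightarrow(n{+}1{-}b,n{+}1{-}a)\in{\rm Inv}((w_0\sigma w_0)^{-1})$ yields the opposite for $\tau$. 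Hence neither inversion set contains the other, so $\pi$ and $\tau$ are weak-order incomparable.

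The main obstacle is the Key Lemma: once one accepts the geometric observation ``addable box corresponds to an adjacent value swap'', the rest is essentially bookkeeping, but verifying that observation rigorously in the boundary cases mentioned above requires a patient unpacking of the path construction from the proof of Theorem~\ref{prefixes-theorem}.
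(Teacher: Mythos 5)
Your proof is correct, and it takes a somewhat different route from the paper's even though both rest on the same geometric observation: that adding a corner box $(i,j)$ to $\shape(\sigma)$ flips the two consecutive steps ``N at row $i$, E at column $j$'' of the separating lattice path into ``E at column $j$, N at row $i$,'' i.e.\ swaps the adjacent values $i$ and $j$ in the one-line notation. The paper exploits this only at the level of covering relations (box addition $=$ cover in $\W(\U_n)$) and obtains condition (i) separately, from the descent-set monotonicity of weak order (\cite[Prop.~3.1.3]{BB}) combined with Remarks~\ref{knuth1} and~\ref{knuth2}. You instead upgrade the observation, by induction on the number of boxes, to the full identity $\{(a,b):a<b,\ \pi^{-1}(a)>\pi^{-1}(b)\}=\shape(\pi)$ for $\pi\in\LL_n$, after which everything — both directions of (ii) and the incomparability behind (i) — follows from the single standard fact that right weak order is containment of left inversion sets. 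This buys you two things: the direction ``domination $\Rightarrow$ shape inclusion'' is immediate from your Key Lemma, whereas the paper's deduction of order equivalence from cover equivalence in the induced subposet is terser and more delicate; and your derivation of (i) from the boxes $(1,2)$ and $(n-1,n)$ (every nonempty shifted shape contains the former, only $\Delta_n$ contains the latter, and conjugation by $w_0$ reflects boxes via $(a,b)\mapsto(n{+}1{-}b,n{+}1{-}a)$) avoids invoking descent sets altogether. The price is the inductive verification of the Key Lemma, including the boundary case $j=i+1$ where the path's starting corner shifts; you flag this correctly, and the check does go through (the old path begins with ``E at column $i$, E at column $i+1$'' and the new one with ``E at column $i+1$, N at row $i$,'' again an adjacent swap of the values $i$ and $i+1$).
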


\begin{proof}
By~\cite[Cor. 1.5.2, Prop. 3.1.3]{BB}, if $\pi\le \tau$ in
$\W(\S_n)$, then the corresponding descent sets satisfy
$\D(\pi^{-1})\subseteq \D(\tau^{-1})$. Combining this with the
characterizations of left-unimodal and unimodal permutations by
descent sets, given in Remarks~\ref{knuth1} and~\ref{knuth2},
condition $(i)$ follows.

Now we may assume, without loss of generality, that $\pi,
\tau\in \LL_n$ (for $\pi, \tau\in w_0 \LL_n w_0$, the same
proof holds by symmetry, by conjugation by $w_0$). To
complete the proof it suffices to show that for two left-unimodal
permutations, domination in weak order is equivalent to inclusion
of the corresponding shapes. Indeed, recall the bijection from $\LL_n$
to the set of shifted shapes of size at most $\binom{n}{2}$,
described in Section~\ref{section-prefixes}.
By this bijection, for any $\pi\in \LL_n\setminus\{w_0\}$, the addition
of a box in the border of $\shape(\pi)$ corresponds to a switch of two adjacent increasing letters in
$\pi$ giving a permutation in $\LL_n$. This is precisely the covering relation in
$\W(\U_n)$. Thus, for two left-unimodal permutations, the covering
relation in $\W(\U_n)$ is equivalent to the covering relation in the
poset of shifted shapes inside $\Delta_n$ ordered by inclusion,
and hence domination is equivalent.
\end{proof}

\begin{corollary}
For every $\pi\in \U_n$
$$
\ell(\pi)=|\shape(\pi)|,
$$
where $|\shape(\pi)|$ denotes the size of the shape.
\end{corollary}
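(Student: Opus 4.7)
The plan is to split on whether $\pi\in\LL_n$ or $\pi\in\U_n\setminus\LL_n$, handle the left-unimodal case directly from the bijection of Section~\ref{section-prefixes}, and reduce the other case to this one by conjugation by $w_0$.

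First I would treat the case $\pi\in\LL_n$. The identity $e$ corresponds to the empty shape, so the claim holds at the base with $\ell(e)=0=|\emptyset|$. Now I would induct on $|\shape(\pi)|$. The key ingredient, already extracted inside the proof of Theorem~\ref{weak_domination}, is that adding a single border box to $\shape(\pi)$ corresponds to swapping two adjacent increasing letters of $\pi$, i.e.\ to right-multiplication by some simple reflection $\sigma_i$ that is \emph{not} a right descent of $\pi$. Hence this operation is precisely a covering relation in the right weak order, and so increases $\ell$ by exactly $1$. Iterating from $e$ along any linear extension of the shifted-inclusion order on the filled boxes produces a sequence of covering relations reaching $\pi$, of length $|\shape(\pi)|$, which is therefore also $\ell(\pi)$.

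Next I would dispose of the case $\pi\in\U_n\setminus\LL_n$ by a symmetry argument. By the remark preceding Theorem~\ref{weak_domination}, $w_0\pi w_0\in\LL_n$, and by the very definition of $\shape$ on $\U_n\setminus\LL_n$ we have $\shape(\pi)=\shape(w_0\pi w_0)$. Since conjugation by $w_0$ is the Coxeter diagram automorphism $\sigma_i\mapsto\sigma_{n-i}$ of $\S_n$, it preserves length, so $\ell(\pi)=\ell(w_0\pi w_0)$. Applying the left-unimodal case to $w_0\pi w_0$ then gives $\ell(\pi)=\ell(w_0\pi w_0)=|\shape(w_0\pi w_0)|=|\shape(\pi)|$, completing the proof.

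There is no real obstacle here; the only subtlety worth flagging is that the transpositions $t_r=(i,j)$ appearing in Theorem~\ref{prefixes-theorem} are \emph{not} simple reflections in general, so one cannot read $\ell(\pi)$ off the number of transpositions directly. That is why I rely instead on the border-box covering description carried out in the proof of Theorem~\ref{weak_domination}, which stays inside $\LL_n$ at every step and produces genuine simple-reflection covers in the weak order.
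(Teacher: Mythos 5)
Your proof is correct and follows essentially the route the paper intends: the corollary is left without an explicit proof because it is immediate from the covering-relation correspondence established in the proof of Theorem~\ref{weak_domination} (each added border box is a weak-order cover, hence raises $\ell$ by one), combined with the length-preserving conjugation by $w_0$ for the case $\pi\in\U_n\setminus\LL_n$ — exactly the two steps you carry out. Your closing remark that the transpositions $t_r=(i,j)$ are not simple reflections, so one cannot count them directly, is a worthwhile clarification.
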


\subsection{Enumeration of maximal chains}

Denote by $\W(\U_n)$ the subposet of $\W(\S_n)$ which is induced by
$\U_n$. Theorem~\ref{weak_domination} implies the following nice properties of this poset.

\begin{corollary}\label{weak-properties}
$\W(\U_n)$ is a graded self-dual modular lattice.
\end{corollary}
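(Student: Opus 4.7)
The plan is to apply Theorem~\ref{weak_domination}, which via $\shape$ identifies each of the two ``chambers'' $\LL_n$ and $w_0\LL_n w_0$ of $\U_n$ with the poset of shifted shapes contained in $\Delta_n$ under inclusion; these chambers share only $\{e,w_0\}$, and by condition (i) of that theorem no further comparabilities arise between them.

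The first step is to verify that the shifted-shape poset is the distributive lattice $J(P)$ of order ideals of the cell poset $P$ of $\Delta_n$ under the coordinate order $(i,j)\le(i',j')\iff i\le i'$ and $j\le j'$. A direct check shows that the order ideals of $P$ are exactly the shifted subdiagrams: if $(i',j)$ lies in an ideal $I$ and $i<i'$, then every $(i,j')\in P$ with $j'\le j$ also lies in $I$, which forces the row lengths $\mu_i$ to strictly decrease in $i$; conversely every shifted shape is closed under taking cells weakly northwest. Hence each chamber is a distributive (so modular and graded) lattice of rank $\binom{n}{2}$, with rank function $|\shape(\cdot)|=\ell$.

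I would then assemble the two chambers. Inside a chamber, meets and joins are the shape-lattice ones. For $\pi$ and $\tau$ in distinct chambers with neither equal to $e$ nor $w_0$, Theorem~\ref{weak_domination}(i) forces any common lower (resp. upper) bound to lie in $\LL_n\cap w_0\LL_n w_0=\{e,w_0\}$, so $\pi\wedge\tau=e$ and $\pi\vee\tau=w_0$. This gives $\W(\U_n)$ a well-defined lattice structure; gradedness of rank $\binom{n}{2}$ follows since the two chambers share extrema and the rank function $\ell$. For self-duality I would use the involution $\pi\mapsto w_0\pi$: it is order-reversing on $\W(\S_n)$ (since $\ell(w_0\pi)=\binom{n}{2}-\ell(\pi)$ and $(w_0\tau)^{-1}(w_0\pi)=\tau^{-1}\pi$), and because complementing values preserves intervals in $\bbz$ it satisfies $w_0\LL_n=\LL_n$; the identity $w_0(w_0\LL_n w_0)=\LL_n w_0=w_0\LL_n w_0$ then shows the other chamber is stabilized as well.

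The main obstacle I expect is modularity of the full poset. Inside each chamber it is immediate from distributivity. For the remaining cross-chamber instances of the identity $x\vee(y\wedge z)=(x\vee y)\wedge z$ with $x\le z$, I would perform a case analysis on which chambers contain $x,y,z$, using the extremal behavior of meets and joins at $\{e,w_0\}$ together with condition (i) of Theorem~\ref{weak_domination} to reduce both sides to explicit elements and verify equality.
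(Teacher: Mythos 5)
Your treatment of the lattice structure, gradedness, and self-duality is correct, and since the paper gives no argument for this corollary beyond citing Theorem~\ref{weak_domination}, your details are exactly the ones that need to be supplied: the identification of each chamber with the distributive lattice of shifted shapes contained in $\Delta_n$ (order ideals of the cell poset), the observation that cross-chamber meets and joins collapse to $e$ and $w_0$, and the order-reversing involution $\pi\mapsto w_0\pi$, which stabilizes both chambers because value-complementation preserves intervals in $\bbz$. All of that is sound.

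The gap is precisely the step you flagged and deferred: modularity. The case analysis you propose cannot be completed, because the cross-chamber behavior you correctly established already rules modularity out for $n\ge 3$. Take $x=\sigma_1=2134\dots n$ and $z=\sigma_1\sigma_2=2314\dots n$, both in $\LL_n$ with $x<z<w_0$, and $y=\sigma_{n-1}=12\dots(n-2)\,n\,(n-1)\in w_0\LL_n w_0\setminus\LL_n$. By condition (i) of Theorem~\ref{weak_domination}, $y$ is incomparable to $x$ and to $z$, and by your own computation $x\vee y=z\vee y=w_0$ and $x\wedge y=z\wedge y=e$. Hence $\{e,x,z,y,w_0\}$ is a sublattice isomorphic to the pentagon $N_5$, so $\W(\U_n)$ is not modular; equivalently, the rank identity required of a graded modular lattice fails, since $\ell(x)+\ell(y)=2$ while $\ell(x\vee y)+\ell(x\wedge y)=\binom{n}{2}$. (Already for $n=3$ one has $\W(\U_3)=\W(\S_3)$, the hexagon, which the paper itself notes is not modular.) So the corollary as stated is too strong: what is true, and what your argument actually proves, is that $\W(\U_n)$ is a graded self-dual lattice whose two chambers $\LL_n$ and $w_0\LL_n w_0$ are distributive, hence modular; this chamberwise statement is also what is really needed downstream, e.g.\ to compute distances from $e$ and from $w_0$ as rank differences in Proposition~\ref{U-Hasse}.
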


\begin{corollary}\label{mc_enumeration_1}
For every $\pi\in\U_n\setminus \{w_0\}$, the number of maximal
chains in the interval $[e,\ \pi]$ is equal to the number of
standard Young tableaux of shifted shape $\shape(\pi)$, hence given by a
hook formula.
\end{corollary}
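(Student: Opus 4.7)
The plan is to deduce this directly from Theorem~\ref{weak_domination}, using the standard correspondence between saturated chains in a shape poset and standard tableaux. First I would reduce to the case $\pi\in\LL_n$. The map $\tau\mapsto w_0\tau w_0$ permutes the Coxeter generators $\sigma_i\leftrightarrow\sigma_{n-i}$ and hence is a length-preserving automorphism of $\W(\S_n)$. It sends $[e,\pi]$ to $[e,w_0\pi w_0]$, and by the very definition of $\shape$ on $\U_n\setminus\LL_n$ the shifted shape is unchanged. So one may assume $\pi\in\LL_n$.

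For $\pi\in\LL_n$, I would first verify that $[e,\pi]$ computed in $\W(\S_n)$ is already contained in $\LL_n$: if $\tau\le\pi$, then $\D(\tau^{-1})\subseteq\D(\pi^{-1})=\{1,\dots,i\}$ is of the form $\{1,\dots,j\}$, and Remark~\ref{knuth1} yields $\tau\in\LL_n$. Hence Theorem~\ref{weak_domination} applies throughout the interval and identifies it, as a poset, with the set of shifted shapes $\mu\subseteq\shape(\pi)$ ordered by inclusion.

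The proof of Theorem~\ref{weak_domination} already established that a covering relation in this shape poset corresponds to adjoining a single box. Therefore a maximal chain $e=\tau_0\lessdot\cdots\lessdot\tau_k=\pi$ is the same data as a sequence $\emptyset=\mu_0\subset\mu_1\subset\cdots\subset\mu_k=\shape(\pi)$ with $|\mu_r|=r$, and labeling the box added at step $r$ by $r$ produces a standard Young tableau of shifted shape $\shape(\pi)$. This bijection is visibly reversible, so the number of maximal chains equals the number of standard Young tableaux of that shifted shape, which is in turn given by the (shifted) hook length formula. No serious obstacle arises: the whole argument is a direct translation of Theorem~\ref{weak_domination} via the standard bijection between saturated chains and SYT, together with a brief verification that the $w_0$-conjugation reduction respects both the interval and the shape.
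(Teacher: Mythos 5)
Your overall strategy is the paper's: reduce to $\LL_n$ by conjugating with $w_0$, identify the interval with the poset of shifted shapes contained in $\shape(\pi)$ via Theorem~\ref{weak_domination} (together with the bijection of Theorem~\ref{prefixes-theorem}), and read off maximal chains as standard Young tableaux. However, one step as written is false: you claim that $[e,\pi]$ computed in $\W(\S_n)$ is already contained in $\LL_n$ because $\D(\tau^{-1})\subseteq\D(\pi^{-1})=\{1,\dots,i\}$ ``is of the form $\{1,\dots,j\}$''. A subset of an initial segment need not be an initial segment, and the full weak-order interval genuinely escapes $\LL_n$: for $n=4$ and $\pi=3214\in\LL_4$ one has $1324=\sigma_2\le \sigma_2\sigma_1\sigma_2=3214$ in $\W(\S_4)$, yet $\D(1324^{-1})=\{2\}$ and $1324\notin\LL_4$. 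Worse, if the interval really were taken in $\W(\S_n)$, the corollary itself would fail: $[e,3241]$ in $\W(\S_4)$ has three maximal chains (the three reduced words of $3241$), while $\shape(3241)$ is the shifted shape $(3,1)$, which has only two standard Young tableaux.

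The statement must be read with $[e,\pi]$ taken in the induced subposet $\W(\U_n)$ --- this is how it is used in the proof of Corollary~\ref{mc_enumeration} --- and with that reading your argument goes through without the offending step: for $\tau\in\U_n$ with $\tau\le\pi$ and $\pi\in\LL_n\setminus\{e,w_0\}$, condition (i) of Theorem~\ref{weak_domination} already forces $\tau\in\LL_n$, and the rest of your identification of the interval with the poset of shifted shapes under inclusion, and of its maximal chains with standard Young tableaux of shape $\shape(\pi)$ via the box-by-box labeling, is exactly what the paper does. So the gap is localized: delete the false containment claim, state explicitly which poset the interval lives in, and invoke condition (i) instead.
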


\begin{proof}
By Theorem~\ref{prefixes-theorem} together with
Theorem~\ref{weak_domination}, the statement holds for every
$\pi\in \LL_n\setminus\{w_0\}$. By conjugation by $w_0$, it holds
for all elements in $\U_n\setminus \LL_n$ as well.
\end{proof}

\begin{corollary}\label{mc_enumeration}
For $n>2$, the number of maximal chains in $\W(\U_n)$ is equal to
twice the number of standard Young tableaux of shifted staircase
shape, hence equal to
$$
2\binom{n}{2}! \cdot \prod_{i=0}^{n-2} \frac{i!}{(2i+1)!}.
$$
\end{corollary}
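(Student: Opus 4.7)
The plan is to use Theorem~\ref{weak_domination} to decompose $\W(\U_n)$ into two copies of the shifted-shape lattice inside $\Delta_n$ and then invoke the bijection of Section~\ref{section-prefixes} together with the shifted hook length formula.

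First, I would observe that $\W(\U_n)$ is the union of the two sublattices $\W(\LL_n)$ and $\W(w_0 \LL_n w_0)$, whose intersection as a set of elements is exactly $\{e, w_0\}$ (using the identity $w_0 \LL_n w_0 = (\U_n \setminus \LL_n) \cup \{e, w_0\}$ recorded just before Theorem~\ref{weak_domination}). Condition~(i) of Theorem~\ref{weak_domination} forces any two comparable elements of $\U_n$ to belong to the same sublattice, so every covering relation -- and hence every maximal chain from $e$ to $w_0$ in $\W(\U_n)$ -- lies entirely inside one of them.

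Second, Theorem~\ref{prefixes-theorem} combined with the shape-containment criterion of Theorem~\ref{weak_domination}(ii) identifies $\W(\LL_n)$ with the poset of shifted shapes contained in $\Delta_n$ under inclusion, where covers correspond to the addition of a single box. Under this identification $e$ maps to the empty shape and $w_0$ maps to the full staircase $\Delta_n$, so maximal chains from $e$ to $w_0$ in $\W(\LL_n)$ are in bijection with standard Young tableaux of shifted shape $\Delta_n$. Conjugation by $w_0$ gives the same count for $\W(w_0 \LL_n w_0)$.

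Finally, for $n > 2$ the two families of chains are disjoint, because a maximal chain has $\binom{n}{2} + 1 \ge 4$ elements while the overlap $\LL_n \cap w_0 \LL_n w_0 = \{e, w_0\}$ has only two elements. The total count is therefore $2\,|\SYT(\Delta_n)|$, and the stated product is obtained by applying the shifted hook length formula to $\Delta_n$. The only delicate point is the rigidity of the decomposition in the first step -- the fact that the induced order on $\U_n$ never ``mixes'' the two sublattices -- and this is precisely what Theorem~\ref{weak_domination}(i) supplies; everything else is assembly and a standard hook-length evaluation.
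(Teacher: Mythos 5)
Your proof is correct and follows essentially the same route as the paper: both arguments rest on Theorems~\ref{prefixes-theorem} and~\ref{weak_domination} to identify maximal chains in each of the two halves of $\W(\U_n)$ with standard Young tableaux of the shifted staircase, and your split into the sublattices on $\LL_n$ and $w_0\LL_n w_0$ partitions the chains exactly as the paper's split according to which of the two coatoms $w_0\sigma_1$, $w_0\sigma_{n-1}$ a chain passes through. The only cosmetic difference is that the paper counts tableaux of $\Delta_n$ with its bottom box removed (via Corollary~\ref{mc_enumeration_1}), a number that coincides with $|\SYT(\Delta_n)|$ because that box must contain the largest entry, whereas you count tableaux of the full staircase directly.
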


\begin{proof}
The maximum $w_0$ covers the two elements $w_0\sigma_1$ and $w_0\sigma_{n-1}$.
Thus the number of maximal chains in $\W(\U_n)$ is the sum of the numbers of maximal
chains in $[e,w_0\sigma_1]$ and $[e,w_0\sigma_{n-1}]$. By
Corollary~\ref{mc_enumeration_1}, this equals the number of
standard Young tableaux of shape $\shape(w_0\sigma_1)$ plus number of
standard Young tableaux of shape $\shape(w_0\sigma_{n-1})$. Since
$w_0(w_0\sigma_{n-1})w_0= w_0\sigma_1$, these two shapes are the same, namely
$\Delta_n$ with the box in row $n-1$ (the bottommost row) removed. By Schur's Formula~\cite{Schur}\cite[p.
267 (2)]{Md}, the number of standard Young tableaux of this shape
is $\binom{n}{2}! \cdot \prod_{i=0}^{n-2} \frac{i!}{(2i+1)!}$, completing the proof.
\end{proof}

\subsection{The Hasse diagram}

Let $\Gamma_n$ be the undirected Hasse diagram of $\W(\U_n)$. A drawing of $\Gamma_4$ is given by the black vertices and solid edges in Figure~\ref{fig:X4}.

\begin{proposition}\label{U-Hasse}
\begin{itemize}
\item[(i)] The diameter of $\Gamma_n$ is $\binom{n}{2}$.
\item[(ii)] The vertices $e$ and $w_0$ are antipodes in
$\Gamma_n$. \item[(iii)] The number of geodesics between $e$ and $w_0$ is $2\binom{n}{2}! \cdot \prod_{i=0}^{n-2}
\frac{i!}{(2i+1)!}$.
\end{itemize}
\end{proposition}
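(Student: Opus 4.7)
The plan is to derive all three parts directly from the graded modular lattice structure of $\W(\U_n)$ established in Corollary~\ref{weak-properties}, using the rank formula $\ell(\pi)=|\shape(\pi)|$ together with the fact that $\shape(w_0)=\Delta_n$, so that the rank of $w_0$ in $\W(\U_n)$ equals $|\Delta_n|=\binom{n}{2}$.

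For the distance $d(e,w_0)$ in $\Gamma_n$, I would invoke the general fact that in the Hasse diagram of any graded poset each edge changes the rank by $\pm 1$, so a walk of length $k$ between elements of ranks $r$ and $r'$ must satisfy $k\ge|r-r'|$. Conversely, any saturated chain from $e$ to $w_0$ yields a path of length $\binom{n}{2}$ in $\Gamma_n$. Hence $d(e,w_0)=\binom{n}{2}$, and in particular $\diam(\Gamma_n)\ge\binom{n}{2}$. Parts (i) and (ii) will therefore follow once we establish the reverse inequality $\diam(\Gamma_n)\le\binom{n}{2}$. For this I would use that $e$ and $w_0$ are the bottom and top of $\W(\U_n)$: given $\pi,\tau\in\U_n$ of ranks $r,r'$, concatenating geodesics through $e$ and through $w_0$ (each of which exists by the argument just given, applied to the comparable pairs $e\le\pi$, $e\le\tau$, $\pi\le w_0$, $\tau\le w_0$) gives
\[
d(\pi,\tau)\;\le\;\min\!\left\{\,r+r',\ 2\binom{n}{2}-r-r'\,\right\}\;\le\;\binom{n}{2}.
\]

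For part (iii), every geodesic from $e$ to $w_0$ in $\Gamma_n$ has length exactly $\binom{n}{2}$, and if such a geodesic uses $u$ up-edges and $d$ down-edges then the rank change and the length together give $u-d=\binom{n}{2}=u+d$, forcing $d=0$. Consequently the geodesics from $e$ to $w_0$ are in natural bijection with the saturated chains from $e$ to $w_0$, which are precisely the maximal chains of $\W(\U_n)$; the count is then delivered by Corollary~\ref{mc_enumeration}.

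I do not expect any serious obstacle: once the rank function is in hand from Section~\ref{section-order}, the whole proposition reduces to standard graded-poset bookkeeping. The only step that genuinely requires a line of justification (rather than a citation) is the claim in the last paragraph that no geodesic from $e$ to $w_0$ can sneak in by mixing up- and down-steps; that is precisely the parity/rank argument just outlined.
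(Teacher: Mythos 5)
Your proof is correct, and it takes a slightly but genuinely different route from the paper's. The paper's entire argument rests on the modular-lattice distance formula: in the undirected Hasse diagram of a modular lattice, the distance between two elements equals the difference between the ranks of their join and their meet (citing \cite[Lemma 5.2]{TFT}, via Corollary~\ref{weak-properties}); taking meet $=e$ and join $=w_0$ immediately gives parts (i) and (ii), and (iii) is then read off from Corollary~\ref{mc_enumeration}. You avoid modularity altogether: you use only that $\W(\U_n)$ is graded with minimum $e$ and maximum $w_0$ of rank $\binom{n}{2}$, getting the lower bound $d(e,w_0)\ge\binom{n}{2}$ from the $\pm1$ rank change along Hasse edges, and the upper bound $\diam(\Gamma_n)\le\binom{n}{2}$ by routing an arbitrary pair through $e$ or through $w_0$, whichever is cheaper. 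What your approach buys is economy of hypotheses (gradedness and boundedness suffice, so the argument would survive even without the lattice structure); what the paper's approach buys is an exact distance formula for \emph{every} pair of vertices, not just a diameter bound. Your explicit parity argument in part (iii) --- that $u+d=u-d=\binom{n}{2}$ forces $d=0$, so geodesics between $e$ and $w_0$ are exactly the maximal chains --- is a step the paper leaves implicit, and it is worth spelling out as you did.
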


\begin{proof} Since $\W(\U_n)$ is a modular lattice, the distance
between any two vertices is equal to the difference between the
ranks of their join and their meet (see~\cite[Lemma 5.2]{TFT}).
Hence, the diameter is equal to the maximum rank.
This proves $(i)$ and $(ii)$.
Part $(iii)$ then follows from Corollary~\ref{mc_enumeration}.
\end{proof}

\section{A graph structure on arc permutations}\label{X-graph}

\subsection{The graph $X_n$}

Let $X_n$ be the subgraph of the Cayley graph $X(\S_n,\s)$ induced
by $\A_n$. In other words, the vertex set of $X_n$ is $\A_n$, and
two elements $u,v\in \A_n$ are adjacent if and only if there
exists a simple reflection $\sigma_i\in\s$, such that
$u=v\sigma_i$. The graph $X_4$ is drawn in Figure~\ref{fig:X4}. The following theorem shows that $X_n$ and
$\Gamma_n$ share similar properties.

\begin{theorem}\label{A-graph}
\begin{itemize}
\item[(i)] The diameter of $X_n$ is $\binom{n}{2}$.
\item[(ii)] The vertices $e$ and $w_0$ are antipodes in $X_n$.
\item[(iii)] The number of vertices in geodesics between $e$ and $w_0$ is $2^n-2$.
\item[(iv)] The number of geodesics between $e$ and $w_0$ is $2\binom{n}{2}! \cdot \prod_{i=0}^{n-2} \frac{i!}{(2i+1)!}$.
\end{itemize}
\end{theorem}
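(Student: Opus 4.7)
The plan is to prove all four parts by identifying the vertices of $X_n$ lying on geodesics from $e$ to $w_0$ as precisely the unimodal arc permutations. Since $X_n$ is an induced subgraph of the Cayley graph of $\S_n$, distances in $X_n$ dominate Coxeter lengths, so $d_{X_n}(e,w_0)\ge\ell(w_0)=\binom{n}{2}$. Conversely, by Corollary~\ref{mc_enumeration} any maximal chain of $\W(\U_n)$ consists of unimodal (and therefore arc) permutations linked by simple reflections, giving a path of length $\binom{n}{2}$ in $X_n$ from $e$ to $w_0$. This settles $(ii)$ and yields the lower bound in $(i)$; the matching upper bound will follow from the characterization of geodesics below, combined with the vertex-transitivity of $X_n$ as a Schreier graph of the affine Weyl group action of Section~\ref{section-action}.

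Let $\A_n^+=\{v\in\A_n:d_{X_n}(e,v)=\ell(v)\}$ and $\A_n^-=\{v\in\A_n:d_{X_n}(v,w_0)=\binom{n}{2}-\ell(v)\}$, so a vertex lies on some geodesic from $e$ to $w_0$ exactly when it belongs to $\A_n^+\cap\A_n^-$. The inclusion $\U_n\subseteq\A_n^+\cap\A_n^-$ is straightforward: by Theorem~\ref{weak_domination}, any $v\in\U_n$ satisfies $e\le v\le w_0$ in $\W(\U_n)$, and concatenating a maximal chain of $[e,v]$ with one of $[v,w_0]$ (Corollary~\ref{mc_enumeration_1}) produces a geodesic through $v$. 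The real content is the reverse inclusion $\Z_n\cap\A_n^+\cap\A_n^-=\emptyset$.

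For $\pi\in\Z_n$, let $j_1$ be the first index such that $\{\pi(1),\dots,\pi(j_1)\}$ contains both $1$ and $n$. Then either \emph{Case A} holds, with $\pi(j_1)=1$ and $\{\pi(1),\dots,\pi(j_1-1)\}=[a,n]$ for some $3\le a\le n-1$, or \emph{Case B} holds, with $\pi(j_1)=n$ and $\{\pi(1),\dots,\pi(j_1-1)\}=[1,b]$ for some $2\le b\le n-2$. The involution $\bar\pi(k):=n+1-\pi(k)$ preserves $\A_n$, swaps the two cases, and exchanges ascents with descents. The central claim is:
\textbf{Key Claim.} If $\pi\in\Z_n$ is in Case A, $i$ is a descent of $\pi$, and $\tau=\pi\sigma_i\in\A_n$, then $\tau\in\Z_n$ is also in Case A.
Granting this, strong induction on $\ell(\pi)$ shows that no $\pi\in\Z_n$ in Case A belongs to $\A_n^+$: if $\pi\in\A_n^+$, the last step of a monotone arc path to $\pi$ comes from an arc predecessor $v_{k-1}$, which would lie in $\Z_n$ Case A by the Key Claim, be in $\A_n^+$, and have strictly smaller length than $\pi$, contradicting the inductive hypothesis. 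By the symmetry $\pi\mapsto\bar\pi$, no $\pi\in\Z_n$ in Case B belongs to $\A_n^-$, so $\Z_n\cap\A_n^+\cap\A_n^-=\emptyset$. Part $(iii)$ now follows with $|\U_n|=2^n-2$ from Claim~\ref{claimU}, and $(iv)$ because the geodesics from $e$ to $w_0$ in $X_n$ are exactly the maximal chains of $\W(\U_n)$, enumerated by Corollary~\ref{mc_enumeration}.

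The main obstacle is the Key Claim itself, which we verify by case analysis on the position of $i$ relative to $j_1$. If $i\notin\{j_1-1,j_1\}$, the set $\{\tau(1),\dots,\tau(j_1)\}$ equals $\{\pi(1),\dots,\pi(j_1)\}$, so $\tau$ still wraps at step $j_1$ by adding $1$ and is in Case A; here the condition that $\{\tau(1),\dots,\tau(i)\}$ be an arc, combined with the descent at $i$, forces a ``right-then-left'' extension pattern at positions $i,i+1$ (any other pattern either fails to be a descent or creates a gap in the prefix). The position $i=j_1$ is never a descent, since $\pi(j_1)=1$ is smaller than either possible value $\pi(j_1+1)\in\{a-1,2\}$. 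The delicate case is $i=j_1-1$: here $\pi(j_1-1)$ must be an endpoint of $[a,n]$, so is either $a$ or $n$; a direct check in $\bbz_n$ shows that $\{\pi(1),\dots,\pi(j_1-2)\}\cup\{1\}$ is an arc only when $\pi(j_1-1)=a$, in which case $\tau$ wraps at step $j_1-1$ by adding $1$ with new interval $[a+1,n]$, still in Case A. A short calculation using $a\ge 3$ confirms $\tau(1)\in\{2,\dots,n-2\}$, so $\tau\in\Z_n$.
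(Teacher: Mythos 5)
Your reduction of parts (iii) and (iv) to the geodesic characterization is sound, and your route to that characterization — the ``Key Claim'' that removing an inversion at an adjacent descent keeps a Case-A permutation in Case A, followed by induction on length — is a legitimate alternative to the paper's argument (the paper instead observes that in any monotone path from $e$ to $\pi$ the letters $1$ and $n$ must at some point be switched while adjacent, which creates an occurrence of the forbidden pattern $3142$, contradicting Theorem~\ref{arc-pattern-thm}). I checked your Key Claim by splitting on $i<j_1-1$, $i=j_1-1$, $i=j_1$, $i>j_1$ and it does hold; the only blemish is your criterion ``$\tau(1)\in\{2,\dots,n-2\}$'', which is not the right condition for non-unimodality — what you actually need is that a Case-A arc permutation is unimodal if and only if its first letter is $n$, and $\tau(1)=\pi(1)\ne n$ takes care of it.

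The genuine gap is in part (i). The diameter statement requires $\dXn(\pi,\tau)\le\binom{n}{2}$ for \emph{every} pair of vertices, and your proposal only establishes $\dXn(e,w_0)=\binom{n}{2}$. You claim the general upper bound ``will follow from the characterization of geodesics below, combined with the vertex-transitivity of $X_n$ as a Schreier graph.'' This fails on both counts: the geodesic characterization concerns only the pair $(e,w_0)$ and says nothing about other pairs, and Schreier graphs are not vertex-transitive in general — $X_n$ in particular is not, since already in $X_4$ the vertex $1234$ has degree $2$ while $2134$ has degree $3$. (Even granting some transitivity, you would still need to know that $w_0$ realizes the maximum distance from $e$.) This upper bound is in fact the hardest part of the theorem: the paper proves it in Subsection~\ref{X-diameter} by encoding $\A_n$ into $\{0,\dots,n-1\}\times\{0,1\}^{n-2}$, identifying $X_n$ with the Hasse diagram of a dominance order (a modular lattice, where distance equals rank of join minus rank of meet), and then minimizing $\sum_j|x_j|$ against $\sum_j|n-x_j|$ using the automorphism of $X_n$ given by left multiplication by the $n$-cycle. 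Nothing in your proposal substitutes for this, and part (ii) (that $e$ and $w_0$ are antipodes) also depends on it.
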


This theorem will be proved Subsections~\ref{X-diameter} and~\ref{sec:Agraph}.

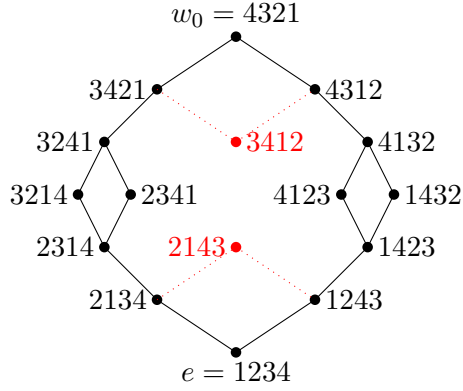
\begin{figure}[hbt]
\begin{center}

\begin{tikzpicture}[scale=0.7]
\fill (3,6) circle (0.1) node[above]{$w_0=4321$};
\fill (1.5,5) circle (0.1) node[left]{$3421$};
\fill (0.5,4) circle (0.1) node[left]{$3241$};
\fill (0,3) circle (0.1) node[left]{$3214$};
\fill (1,3) circle (0.1) node[right]{$2341$};
\fill (0.5,2) circle (0.1) node[left]{$2314$};
\fill (1.5,1) circle (0.1) node[left]{$2134$};
\fill (3,0) circle (0.1) node[below]{$e=1234$};

\fill (4.5,5) circle (0.1) node[right]{$4312$};
\fill (5.5,4) circle (0.1) node[right]{$4132$};
\fill (6,3) circle (0.1) node[right]{$1432$};
\fill (5,3) circle (0.1) node[left]{$4123$};
\fill (5.5,2) circle (0.1) node[right]{$1423$};
\fill (4.5,1) circle (0.1) node[right]{$1243$};

\fill[red] (3,4) circle (0.1) node[right]{$3412$};
\fill[red] (3,2) circle (0.1) node[left]{$2143$};

\draw (3,0)--(1.5,1)--(0.5,2)--(0,3)--(0.5,4)--(1.5,5)--(3,6);
\draw (3,0)--(4.5,1)--(5.5,2)--(6,3)--(5.5,4)--(4.5,5)--(3,6);
\draw[red,dotted] (1.5,1)--(3,2)--(4.5,1);
\draw (0.5,2)--(1,3)--(0.5,4);
\draw (5.5,2)--(5,3)--(5.5,4);
\draw[red,dotted] (1.5,5)--(3,4)--(4.5,5);
\end{tikzpicture}
\caption{\label{fig:X4} The graph $X_4$. The vertices not lying in a geodesic between $e$ and $w_0$ are drawn in red with dotted edges, and they correspond to non-unimodal permutations by Lemma~\ref{lemma-unimodal}.}
\end{center}
\end{figure}

\subsection{The diameter of $X_n$}\label{X-diameter}

In this subsection we show that the diameter of $X_n$ is $\binom{n}{2}$, proving Theorem~\ref{A-graph}(i).
To see that this is a lower bound, note that the inversion number does not change by more than 1 along
each edge of $X_n$. It follows that the diameter of $X_n$ is at least $\inv(w_0)-\inv(e)=\binom{n}{2}$.
This argument also shows that part (ii) of Theorem~\ref{A-graph} will follow once we prove part $(i)$, since the distance between $e$ and $w_0$ is at least $\binom{n}{2}$.

\ms

The proof that this is also an upper bound on the diameter is more involved, and it is similar to
the proof in~\cite[Theorem 5.1]{TFT}.
Consider the encoding $\psi:\A_n \rightarrow \{0,1,\dots,n-1\}\times \{0,1\}^{n-2}$ given by
$\psi(\pi)=(\psi_0,\psi_1,\dots,\psi_{n-2})$, where
$$
\psi_0:=\pi(1)-1
$$
and, for $1\le i \le n-2$,
$$
\psi_i:=\begin{cases} 1 & \text{if } \ol{\pi(i+1)-1}\in\{\pi(1),\pi(2),\dots,\pi(i)\},\\
0 &  \text{if } \ol{\pi(i+1)+1}\in\{\pi(1),\pi(2),\dots,\pi(i)\},
\end{cases}
$$
where $\ol{m}$ denotes the element of $\{1,2,\dots,n\}$ that is congruent with $m$ mod $n$.
Note that exactly one of the two above conditions holds, because $\{\pi(1),\pi(2),\dots,\pi(i)\}$ forms an interval in $\ZZ_n$.

\begin{ex} For $\pi=4352176\in \S_7$, $\psi(\pi)=(3,0,1,0,0,0)$.
\end{ex}

The encoding of the vertices of $X_4$ is given in Figure~\ref{fig:X4-encoded}. The following observation is clear from the definition of $X_n$ and the encoding $\psi$.

\begin{lemma}\label{adjacency-encoding}
Two arc permutations $\pi,\tau\in\A_n$ with $\pi\neq\tau$ are adjacent in $X_n$ if
and only if exactly one of the following holds:
\begin{itemize}
\item[(i)] $\psi(\tau)$ is obtained from $\psi(\pi)$ by switching
two adjacent entries $\psi_i$ and $\psi_{i+1}$ for some $1\le i<n-2$;

\item[(ii)]
$\psi(\pi)_i=\psi(\tau)_i$ for all $0\le i<n-2$;

\item[(iii)]
$\psi(\pi)_{0}+\psi(\pi)_1=\psi(\tau)_{0}+\psi(\tau)_1$ mod $n$, and $\psi(\tau)_i=\psi(\pi)_i$ for all $2\le i\le n-2$.
\end{itemize}
\end{lemma}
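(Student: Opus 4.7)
The plan is to trace, for each simple reflection $\sigma_j$ with $1\le j\le n-1$, how right multiplication $\tau=\pi\sigma_j$ transforms the encoding $\psi$, while simultaneously recording the constraint imposed by requiring $\tau\in\A_n$. The key observation is that $\tau=\pi\sigma_j$ exchanges the values in positions $j$ and $j+1$, so for every $k\neq j$ the prefix set $\{\tau(1),\dots,\tau(k)\}$ agrees with $\{\pi(1),\dots,\pi(k)\}$ as a set. In particular, only the length-$j$ prefix is altered, changing from $A\cup\{\pi(j)\}$ to $A\cup\{\pi(j+1)\}$, where $A=\{\pi(1),\dots,\pi(j-1)\}$.

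I would then dispatch the two boundary values of $j$. For $j=n-1$ the length-$(n-1)$ prefix is always $\bbz_n$ minus a singleton, hence automatically an interval, so $\tau\in\A_n$ unconditionally; the elements $\pi(n-1)$ and $\pi(n)$ fill the two-element complement of the interval $\{\pi(1),\dots,\pi(n-2)\}$ and therefore lie on its two opposite sides, so $\psi_{n-2}$ flips and all other entries are fixed, producing condition (ii). For $j=1$ the length-$1$ prefix is a singleton and the length-$2$ prefix is unchanged as a set, so $\tau\in\A_n$ again always; using that $\pi(2)\in\{\overline{\pi(1)+1},\overline{\pi(1)-1}\}$, a short calculation shows that $\psi_1$ flips and that $\psi_0$ satisfies $\psi_0'\equiv\psi_0+(2\psi_1-1)\pmod n$, so the sum $\psi_0+\psi_1$ is preserved modulo $n$, which is condition (iii).

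The main body of the proof handles the middle range $2\le j\le n-2$, which should yield condition (i). Since $\pi\in\A_n$, the three sets $A$, $A\cup\{\pi(j)\}$ and $A\cup\{\pi(j),\pi(j+1)\}$ are all intervals in $\bbz_n$, and $\tau\in\A_n$ additionally requires $A\cup\{\pi(j+1)\}$ to be an interval. This forces $\pi(j+1)$ to extend $A$ on the side opposite to $\pi(j)$, equivalently $\psi_{j-1}\neq\psi_j$; under this hypothesis the swap simply interchanges $\psi_{j-1}$ with $\psi_j$ and fixes every other entry, giving condition (i) after the re-indexing $i=j-1$. Putting the cases together, the three families of transformations exhaust the $n-1$ possibilities $\tau=\pi\sigma_j$, so the converse is automatic: any $\tau\in\A_n$ whose encoding is related to $\psi(\pi)$ as in (i)--(iii) must equal $\pi\sigma_j$ for the corresponding $j$. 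The step I expect to be most delicate is the modular bookkeeping for $j=1$, where one has to track carefully the wrap-around when $\pi(1)\in\{1,n\}$; every other verification reduces to the elementary fact that a proper interval in $\bbz_n$ can be enlarged by exactly one element on each of its two boundary sides.
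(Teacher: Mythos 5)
Your case analysis is correct: the paper offers no proof of this lemma (it is stated as "clear from the definition"), and your direct verification by tracing the effect of $\tau=\pi\sigma_j$ on the prefix intervals for $j=1$, $2\le j\le n-2$, and $j=n-1$ is exactly the intended argument, including the correct criterion $\psi_{j-1}\neq\psi_j$ for $\pi\sigma_j\in\A_n$ in the middle range and the preservation of $\psi_0+\psi_1$ modulo $n$ for $j=1$. The only implicit ingredient worth making explicit in the converse is the injectivity of $\psi$ (which follows since $\psi$ is a bijection onto $\{0,\dots,n-1\}\times\{0,1\}^{n-2}$ by the cardinality count $|\A_n|=n2^{n-2}$), so that a $\tau$ realizing one of the three encodings must equal the corresponding $\pi\sigma_j$.
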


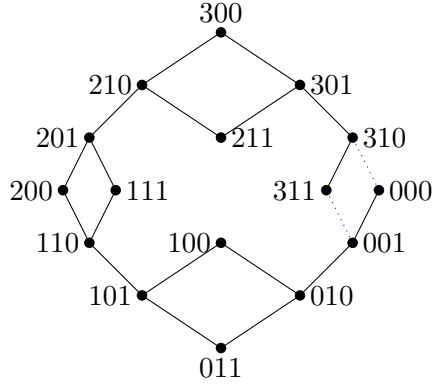
\begin{figure}[htb]
\begin{center}

\begin{tikzpicture}[scale=0.7]
\fill (3,6) circle (0.1) node[above]{$300$};
\fill (1.5,5) circle (0.1) node[left]{$210$};
\fill (0.5,4) circle (0.1) node[left]{$201$};
\fill (0,3) circle (0.1) node[left]{$200$};
\fill (1,3) circle (0.1) node[right]{$111$};
\fill (0.5,2) circle (0.1) node[left]{$110$};
\fill (1.5,1) circle (0.1) node[left]{$101$};
\fill (3,0) circle (0.1) node[below]{$011$};
\fill (4.5,5) circle (0.1) node[right]{$301$};
\fill (5.5,4) circle (0.1) node[right]{$310$};
\fill (6,3) circle (0.1) node[right]{$000$};
\fill (5,3) circle (0.1) node[left]{$311$};
\fill (5.5,2) circle (0.1) node[right]{$001$};
\fill (4.5,1) circle (0.1) node[right]{$010$};
\fill (3,4) circle (0.1) node[right]{$211$};
\fill (3,2) circle (0.1) node[left]{$100$};

\draw (3,0)--(1.5,1)--(0.5,2)--(0,3)--(0.5,4)--(1.5,5)--(3,6);
\draw (3,0)--(4.5,1)--(5.5,2)--(6,3); \draw
(5.5,4)--(4.5,5)--(3,6); \draw[blue,dotted] (6,3)--(5.5,4); \draw
(1.5,1)--(3,2)--(4.5,1); \draw (0.5,2)--(1,3)--(0.5,4);
\draw[blue,dotted] (5.5,2)--(5,3); \draw (5,3)--(5.5,4); \draw
(1.5,5)--(3,4)--(4.5,5);
\end{tikzpicture}
\caption{\label{fig:X4-encoded} The graph $X_4$ with its vertices encoded by $\psi$ (commas and parenthesis have been removed).
Deleting the two dotted blue edges gives the undirected Hasse diagram of
the dominance order on $\{0,1,2,3\}\times\{0,1\}^2\subset\ZZ^3$.}
\end{center}
\end{figure}

The set
$\{0,1,\dots,n-1\}\times \{0,1\}^{n-2}$
of possible encodings inherits the dominance order from $\ZZ^{n-1}$, that is, $v\le u$ if and
only if for every $1\le i\le n-1$, $$\sum\limits_{j=1}^i v_j\le \sum\limits_{j=1}^i u_j.$$
The covering relations in this poset are almost identical to those described by Lemma~\ref{adjacency-encoding}. More precisely, we have the following result.

\begin{proposition}\label{iso}
Through the encoding $\psi$, the graph $X_n$ is isomorphic to the undirected Hasse diagram of the
dominance order on $\{0,1,\dots,n-1\}\times \{0,1\}^{n-2}$ with the $2^{n-3}$ additional edges arising from
Lemma~\ref{adjacency-encoding}(iii) with $\{\psi(\tau)_{0},\psi(\pi)_{0}\}=\{n-1,0\}$.
\end{proposition}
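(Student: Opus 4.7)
The approach is to (a) verify that $\psi$ is a bijection onto $\{0,1,\dots,n-1\}\times\{0,1\}^{n-2}$, (b) classify the covering relations of the dominance order on this set, and (c) match each adjacency type of Lemma~\ref{adjacency-encoding} with either a cover or one of the claimed $2^{n-3}$ exceptional edges.

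Bijectivity follows by an explicit inverse construction: set $\pi(1)=\psi_0+1$, and recursively determine $\pi(i+1)$ from $\psi_i$ as the unique element of $\bbz_n$ lying just above or just below the current interval $\{\pi(1),\dots,\pi(i)\}$, according to whether $\psi_i=0$ or $\psi_i=1$. Since $|\A_n|=n\cdot 2^{n-2}$ equals the size of the target by Claim~\ref{arc-enumeration}, this is a bijection.

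For the covering relations, I would set $D_i=S_i(u)-S_i(v)\ge 0$ and perform a short case analysis on the sequence of differences to determine when no element can be interposed between $v$ and $u$. The boundedness of positions $1,\dots,n-2$ by $1$ and of position $0$ by $n-1$ restricts the covers to exactly the adjacent swaps $u-v=e_i-e_{i+1}$ with $v_i<v_{i+1}$ (for $0\le i\le n-3$) and the last-coordinate bump $u-v=e_{n-2}$ with $v_{n-2}=0$. Every other natural candidate—most importantly $u-v=e_0$, or any single-coordinate bump $e_i$ with $1\le i<n-2$—is ruled out by producing an explicit intermediate element obtained by flipping an appropriate later binary coordinate. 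This exploitation of the binary slack in positions $\ge 1$ is the crux of the argument, and I expect it to be the main technical step.

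Finally, I would match the three cases of Lemma~\ref{adjacency-encoding} with these covers together with the exceptional edges: case (i) corresponds to the swaps at positions $1\le i\le n-3$, case (ii) to the last-coordinate bump, and case (iii) splits according to whether $\psi(\pi)_0+\psi(\pi)_1$ equals $\psi(\tau)_0+\psi(\tau)_1$ as integers or only modulo $n$. The former subfamily is precisely the swap at $i=0$. For the latter, since both sums lie in $\{0,1,\dots,n\}$, their difference must be exactly $n$, forcing $(\psi_0,\psi_1)$ to equal $(0,0)$ on one side and $(n-1,1)$ on the other; the remaining coordinates $\psi_2,\dots,\psi_{n-2}$ are an arbitrary binary string of length $n-3$, yielding exactly $2^{n-3}$ exceptional edges and completing the isomorphism.
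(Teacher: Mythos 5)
Your proposal is correct and takes essentially the route the paper intends: the paper gives no written proof, presenting the proposition as a direct consequence of Lemma~\ref{adjacency-encoding} together with the (graded modular lattice) structure of the dominance order on the box, which is exactly what you spell out. One small slip worth fixing: for the cover $u-v=e_0-e_1$ the admissibility condition is $v_1=1$ and $v_0\le n-2$, not $v_0<v_1$, since the $0$th coordinate is not binary — this does not affect your final matching, where these covers are correctly identified with the instances of case (iii) having $\psi(\pi)_0+\psi(\pi)_1=\psi(\tau)_0+\psi(\tau)_1$ as integers.
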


Denote by $\ddom$ the distance function in the undirected Hasse diagram of the dominance order.
To compute $\ddom(\psi(\pi),\psi(\tau))$, let us first recall some
basic facts. The dominance order on $\ZZ^{n-1}$
is a ranked poset where
$$
{\rm rank} (v_1,\dots,v_{n-1})= \sum\limits_{j=1}^{n-1}
(n-j)v_j=\sum\limits_{j=1}^{n-1} \sum\limits_{k=1}^j v_k.
$$
This poset is a modular lattice, with
$$
(v_1,\dots,v_{n-1})\wedge
(u_1,\dots,u_{n-1})=(\alpha_1,\dots,\alpha_{n-1})
$$
where $\alpha_k=\min\{\sum\limits_{i=1}^k v_i,
\sum\limits_{i=1}^k u_i\}-\min\{\sum\limits_{i=1}^{k-1} v_i,
\sum\limits_{i=1}^{k-1} u_i\}$ for every $1\le k\le n-1$, and
$$
(v_1,\dots,v_{n-1})\vee
(u_1,\dots,u_{n-1})=(\beta_1,\dots,\beta_{n-1})
$$
where $\beta_k=\max\{\sum\limits_{i=1}^k v_i, \sum\limits_{i=1}^k
u_i\}-\max\{\sum\limits_{i=1}^{k-1} v_i, \sum\limits_{i=1}^{k-1}
u_i\}$ for every $1\le k\le n-1$.
 Finally, recall that the distance between two elements in the
undirected Hasse diagram of a modular lattice is equal to the
difference between the ranks of their join and their meet, see
e.g.~\cite[Lemma 5.2]{TFT}.

Combining these facts implies that
\begin{multline}
\ddom(\psi(\pi),\psi(\tau))={\rm rank}(\psi(\pi)\vee \psi(\tau))-{\rm rank}(\psi(\pi)\wedge
\psi(\tau))\\
= \sum\limits_{j=0}^{n-2} \sum\limits_{k=0}^j
(\max\{\sum\limits_{i=0}^k \psi(\pi)_i, \sum\limits_{i=0}^k
\psi(\tau)_i\}-\max\{\sum\limits_{i=0}^{k-1} \psi(\pi)_i,
\sum\limits_{i=0}^{k-1} \psi(\tau)_i\})
\\
-\sum\limits_{j=0}^{n-2} \sum\limits_{k=0}^j
(\min\{\sum\limits_{i=0}^k \psi(\pi)_i, \sum\limits_{i=0}^k
\psi(\tau)_i\}-\min\{\sum\limits_{i=0}^{k-1} \psi(\pi)_i,
\sum\limits_{i=0}^{k-1} \psi(\tau)_i\})\\
\label{eq1}
= \sum\limits_{j=0}^{n-2}|\sum\limits_{i=0}^j
(\psi(\pi)_i-\psi(\tau)_i)|.
\end{multline}

Now we are ready to prove the upper bound
on diameter of $X_n$. Denoting by $\dXn$ the distance function in $X_n$,
we will show that for any $\pi,\tau\in\A_n$, $\dXn(\pi,\tau)\le\binom{n}{2}$.

Let $\gamma$ be the $n$-cycle $(1,\dots,n)$. Clearly, $\A_n$ is
invariant under left multiplication by $\gamma$. Moreover, left
multiplication by $\gamma$ is an automorphism of $X_n$. Thus, for
any integer $k$,
\beq\label{eq:dpitau}\dXn(\pi,\tau)=\dXn(\gamma^k \pi,\gamma^k \tau)\le \ddom(\psi(\gamma^k \pi),\psi(\gamma^k \tau)),\eeq
where the last inequality follows from Proposition~\ref{iso}.
Let $x_0=\psi(\pi)_0-\psi(\tau)_0$.
By equation~\eqref{eq1},
\beq\label{eq:dpitau0}\ddom(\psi(\pi),\psi(\tau))=\sum_{j=0}^{n-2}|x_0+\sum_{i=1}^j
(\psi(\pi)_i-\psi(\tau)_i)|=\sum_{j=0}^{n-2}|x_j|,\eeq
where $x_j=x_0+\sum_{i=1}^j (\psi(\pi)_i-\psi(\tau)_i)$ for $1\le j\le n-2$. Note that $|x_{j}-x_{j-1}|\le 1$ for every $j$.
If $x_0=0$, then
$$\dXn(\pi,\tau)\le\ddom(\psi(\pi),\psi(\tau))=\sum_{j=0}^{n-2}|x_j|\le 0+1+\dots+(n-2)=\binom{n-1}{2}$$ and we are done.

Otherwise, we can assume without loss of generality that $1\le x_0\le n-1$. Let $k=-\tau(1)$, so that $\psi(\gamma^k\pi)_0-\psi(\gamma^k\tau)_0=x_0-n$.
Note that for $1\le i\le n-2$, we have $\psi(\gamma^k \pi)_i=\psi(\pi)_i$ and $\psi(\gamma^k \tau)_i=\psi(\tau)_i$. Thus, by equation~\eqref{eq1},
$$\ddom(\psi(\gamma^k \pi),\psi(\gamma^k \tau))=\sum_{j=0}^{n-2}|x_0-n+\sum_{i=1}^j
(\psi(\pi)_i-\psi(\tau)_i)|=\sum_{j=0}^{n-2}|x_j-n|.$$
Combining this formula with equations~\eqref{eq:dpitau} and~\eqref{eq:dpitau0}, we get
$$\dXn(\pi,\tau)\le \min\{\sum_{j=0}^{n-2}|x_j|,\sum_{j=0}^{n-2}|n-x_j|\}.$$
If $0\le x_j\le n$ for all $j$, then
$$\dXn(\pi,\tau)\le \frac{1}{2}\left(\sum_{j=0}^{n-2}|x_j|+\sum_{j=0}^{n-2}|n-x_j|\right)=\frac{1}{2}\sum_{j=0}^{n-2}(x_j+n-x_j)=\binom{n}{2}.$$
Otherwise, since $|x_{j}-x_{j-1}|\le 1$ for all $j$, there must be some $i$ such that $x_i=0$ or $x_i=n$. If $x_i=0$ for some $i$, then $|x_j|\le|j-i|$ for all $j$, so $\sum_{j=0}^{n-2}|x_j|\le\binom{n}{2}$.
Similarly, if $x_j=n$ for some $j$, then $\sum_{j=0}^{n-2}|n-x_j|\le\binom{n}{2}$, completing the proof of Theorem~\ref{A-graph}(i).

\subsection{Geodesics of $X_n$}\label{sec:Agraph}

To prove parts (ii) and (iii) of Theorem~\ref{A-graph} we need the
following lemma.

\begin{lemma}\label{lemma-unimodal}
A permutation in $\A_n$ lies in a geodesic between $e$ and $w_0$ if and only if it is unimodal.
\end{lemma}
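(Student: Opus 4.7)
My plan is as follows. For the sufficiency direction, I will exploit the modular lattice structure of $\W(\U_n)$ given by Corollary~\ref{weak-properties}. The Hasse diagram $\Gamma_n$ has diameter $\binom{n}{2}$ with $e, w_0$ as antipodes by Proposition~\ref{U-Hasse}(i,ii), and every $\pi \in \U_n$ lies on some maximal chain from $e$ to $w_0$ because $\W(\U_n)$ is a graded lattice. Such a chain is a path of length $\binom{n}{2}$ in $\Gamma_n$; since $\Gamma_n$ is a subgraph of $X_n$ (covers in $\W(\U_n)$ are multiplications by simple reflections and $\U_n \subseteq \A_n$), it is also a path in $X_n$, and as its length equals the diameter of $X_n$ from Theorem~\ref{A-graph}(i), it is a geodesic through $\pi$.

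For the necessity direction, suppose $\pi \in \A_n$ lies on a geodesic $e = \pi_0, \pi_1, \dots, \pi_N = w_0$ of length $N = \binom{n}{2}$ in $X_n$. Since $|\inv(\pi_{k+1}) - \inv(\pi_k)| = 1$ while the total change is $\binom{n}{2}$, every step strictly increases $\inv$ by one, so the word $\sigma_{i_1} \cdots \sigma_{i_N}$ with $\pi_{k+1} = \pi_k \sigma_{i_{k+1}}$ is a reduced expression for $w_0$ whose partial products all lie in $\A_n$. I will prove by induction on $k$ that $\pi_k \in \U_n$. The conjugation $\tau \mapsto w_0 \tau w_0$ is an $X_n$-automorphism fixing $e$ and $w_0$ and preserving $\A_n$, $\U_n$, and $\inv$, so after the first step I may assume $\pi_k \in \LL_n$. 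Via Theorem~\ref{prefixes-theorem}, $\pi_k$ corresponds to a shifted partial filling of $\Delta_n$; the size-$i_{k+1}$ prefix of $\pi_k$ is an interval $[a,b] \subseteq \{1,\dots,n\}$ with $\pi_k(i_{k+1}) \in \{a,b\}$ and $\pi_k(i_{k+1}+1) \in \{a-1, b+1\}$. The two inversion-increasing choices are $(a, b+1)$ and $(b, b+1)$; in the first, $\pi_{k+1} \in \LL_n$ and $\shape(\pi_{k+1})$ is obtained from $\shape(\pi_k)$ by adding a single box, so the induction proceeds.

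The main obstacle is the second subcase $(b, b+1)$. Here the new size-$i_{k+1}$ prefix $\{a, \dots, b-1, b+1\}$ is not an interval in $\bbz$, so $\pi_{k+1} \notin \LL_n$, and it is a cyclic interval in $\bbz_n$ only when $i_{k+1} = n-1$ and $b = n-1$, giving $\pi_{k+1} \in \A_n \setminus \LL_n$. I will handle this by analyzing $\psi(\pi_{k+1})$ and invoking Lemma~\ref{adjacency-encoding}: when the first $n-2$ letters of $\pi_k$ are compatible with right-unimodality, $\pi_{k+1}$ ends up in $w_0 \LL_n w_0 \subseteq \U_n$ and the induction continues after switching to the right-unimodal type; otherwise $\pi_{k+1} \in \Z_n$, and a case analysis on the encoding shows that every subsequent inversion-increasing simple reflection from $\pi_{k+1}$ either sends the permutation out of $\A_n$---owing to equal adjacent middle coordinates in $\psi$ that block the swaps of type (i) in Lemma~\ref{adjacency-encoding}---or keeps the permutation inside $\Z_n$ in such a way that it ultimately dead-ends before reaching $w_0$. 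This dead-end propagation, yielding a contradiction with the geodesic assumption, is the most delicate step of the argument.
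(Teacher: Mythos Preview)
Your sufficiency direction is correct and matches the paper's argument essentially verbatim.

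For necessity, your plan diverges from the paper and has a genuine gap. You propose to follow the geodesic step by step from $e$, maintaining the invariant $\pi_k\in\U_n$, and you correctly identify that the only way to leave $\LL_n$ along an inversion-increasing step is the boundary swap at positions $n-1,n$ when the length-$(n-1)$ prefix is $[1,n-1]$ with $\pi_k(n-1)=n-1$. But the case $\pi_{k+1}\in\Z_n$ is precisely the content of the lemma, and you defer it to an unproved ``dead-end propagation'' via the encoding $\psi$ and Lemma~\ref{adjacency-encoding}. That is not a detail: once $\pi_{k+1}\in\Z_n$, showing that no inversion-increasing path in $X_n$ can reach $w_0$ is exactly what must be established, and nothing in your outline explains why the relevant swaps are blocked at every subsequent step. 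The claim that ``equal adjacent middle coordinates in $\psi$ block the swaps of type~(i)'' does not by itself yield a contradiction, since types~(ii) and~(iii) of Lemma~\ref{adjacency-encoding} remain available and can keep increasing inversions inside $\A_n$.

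The paper's argument avoids this entirely by working directly with a fixed $\pi\in\Z_n$ and the pattern characterization of $\A_n$. If $\pi^{-1}(1)>\pi^{-1}(n)$ then, because $\pi$ is not unimodal, one has $\pi^{-1}(n-1)<\pi^{-1}(n)<\pi^{-1}(1)<\pi^{-1}(2)$. Any length-$\ell(\pi)$ path from $\pi$ to $e$ in $X_n$ consists of inversion-decreasing adjacent swaps, so along it $n-1$ stays left of $n$ and $2$ stays right of $1$; but at the step where the adjacent pair $n,1$ is swapped, the resulting permutation contains $n{-}1,\,1,\,n,\,2$ as a $3142$ pattern and hence leaves $\A_n$ by Theorem~\ref{arc-pattern-thm}. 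The case $\pi^{-1}(1)<\pi^{-1}(n)$ is symmetric, producing a $2413$ pattern on the way to $w_0$. This single-obstruction argument replaces your open-ended induction with a two-line contradiction; I recommend adopting it in place of the dead-end analysis.
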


\begin{proof}
By Corollary~\ref{weak-properties}, all unimodal permutations lie in
geodesics between $e$ and $w_0$ in the undirected Hasse diagram of $\W(\U_n)$. By
Proposition~\ref{U-Hasse} and Theorem~\ref{A-graph}(i), the
distance between $e$ and $w_0$ in this Hasse diagram is the same
as in $X_n$, thus the geodesics between these vertices in this
Hasse diagram are also geodesics in $X_n$.

It remains to show that for every non-unimodal arc permutations $\pi\in\Z_n$,
$\pi$ is not in a geodesic between $e$ and $w_0$.
It suffices to prove that for every such $\pi$, either $\dXn(e,\pi)>\ell(\pi)$,
or $\dXn(w_0,\pi)>\binom{n}{2}-\ell(\pi)$. These two cases are analogous to
the dichotomy in Remark~\ref{knuth3} and Figure~\ref{fig:grid_arc}.

If $\pi^{-1}(1)>\pi^{-1}(n)$, then $\pi^{-1}(n-1)<\pi^{-1}(n)<\pi^{-1}(1)<\pi^{-1}(2)$, since otherwise $\pi$ would be unimodal.
Let $\ell=\ell(\pi)$, and suppose for contradiction that $\dXn(e,\pi)=\ell$.
Then there is a sequence of arc permutations $\pi=\pi_\ell,\pi_{\ell-1},\dots,\pi_1=e$ where each $\pi_i$ is obtained from $\pi_{i+1}$
by switching two adjacent letters at a descent, decreasing the number of inversions by one. In particular, in every $\pi_i$, the entry $n-1$ is to the left of $n$, and $2$ is to the right of $1$.
In order to remove the inversion created by the pair $(1,n)$ in $\pi$ we would have to switch $1$ and $n$, which would create a permutation containing $3142$, thus not in $\A_n$ by Theorem~\ref{arc-pattern-thm}.
This shows that $\dXn(e,\pi)>\ell(\pi)$.

Similarly, if $\pi^{-1}(1)<\pi^{-1}(n)$, then $\pi^{-1}(2)<\pi^{-1}(1)<\pi^{-1}(n)<\pi^{-1}(n-1)$.
Let $k=\binom{n}{2}-\ell(\pi)$, and suppose for contradiction that $\dXn(w_0,\pi)=k$.
Then there is a sequence of arc permutations
$\pi=\pi_k, \pi_{k-1},\dots,\pi_1=w_0$
where each $\pi_i$ is obtained from $\pi_{i+1}$
by switching two adjacent letters at an ascent, increasing the number of inversions by one.
Again, this is impossible because after switching the pair $(1,n)$, the entries $2n1(n-1)$ would form an occurrence of $2413$, so the permutation would not be in $\A_n$ by Theorem~\ref{arc-pattern-thm}.
We conclude that $\dXn(w_0,\pi)>\binom{n}{2}-\ell(\pi)$.
\end{proof}

\begin{proof}[Proof of Theorem~\ref{A-graph}] Parts $(i)$ and $(ii)$ were proved in
Subsection~\ref{X-diameter}. To prove (iii), combine Lemma~\ref{lemma-unimodal} with
Claim~\ref{claimU}.
Finally, (iv) follows from
Lemma~\ref{lemma-unimodal} together with
Corollary~\ref{mc_enumeration}.
\end{proof}

\section{Equidistribution}\label{equid}

In this section we show that the descent set is equidistributed on arc permutations that are not unimodal and on the set of standard Young tableaux
obtained from hooks by adding one box in position $(2,2)$.

\subsection{Enumeration of arc permutations by descent set}

For a set $D=\{i_1,\dots,i_k\}$, define $\x^{D}=x_{i_1}\dots
x_{i_k}$.

\begin{proposition}\label{prop:DUn} For $n\ge2$,
$$\sum_{\pi\in\A_n}\x^{\D(\pi)}=(1+x_1)\dots(1+x_{n-1})\left(1+\sum_{i=1}^{n-2}\frac{x_i+x_{i+1}}{(1+x_i)(1+x_{i+1})}\right).$$
\end{proposition}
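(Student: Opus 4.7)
The plan is to partition $\A_n$ by the step at which the prefix first wraps across the cyclic boundary between $n$ and $1$, then compute the descent set generating function of each piece separately. For $\pi \in \A_n$, define the wrap index $k(\pi)$ to be the smallest $k \ge 2$ such that $\{\pi(1),\dots,\pi(k)\}$ is a proper wrapping cyclic interval (contains both $1$ and $n$ but is not all of $\{1,\dots,n\}$), and set $k(\pi) = n$ if no such $k$ exists with $k \le n-1$. Then $k(\pi) = n$ if and only if every prefix is an interval in $\bbz$, i.e.\ $\pi \in \LL_n$, giving the partition $\A_n = \LL_n \sqcup \bigsqcup_{k=2}^{n-1} \A_n^{(k)}$ with $\A_n^{(k)} = \{\pi : k(\pi) = k\}$.

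For the left-unimodal piece I would first show that $\pi \mapsto \D(\pi)$ is a bijection $\LL_n \to 2^{\{1,\dots,n-1\}}$: a left-unimodal $\pi$ with $\pi(1) = a$ is uniquely determined by $D_\pi := \{j \ge 2 : \pi(j) < a\}$ (cf.\ the proof of Claim~\ref{claimL}), and because values below $a$ appear in decreasing order while values above $a$ appear in increasing order, a short case analysis yields $i \in \D(\pi) \iff i+1 \in D_\pi$. This immediately gives $\sum_{\pi \in \LL_n} \x^{\D(\pi)} = \prod_{j=1}^{n-1}(1+x_j)$, which is the first summand in the expansion of the right-hand side.

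For each $k \in \{2,\dots,n-1\}$ the prefix $P_{k-1}$ must be a non-wrapping interval containing exactly one of $\{1,n\}$, forcing $P_{k-1}=\{1,\dots,k-1\}$ with $\pi(k)=n$ (Type A) or $P_{k-1}=\{n-k+2,\dots,n\}$ with $\pi(k)=1$ (Type B). Hence every $\pi \in \A_n^{(k)}$ is a concatenation of a (shifted) left-unimodal arrangement $\tau$ of $P_{k-1}$, the extreme value $m=\pi(k) \in \{1,n\}$, and a right-unimodal arrangement $\sigma$ of the remaining linear interval of size $n-k$, where right-unimodality comes from the fact that each further extension of the wrapping cyclic interval $P_k$ removes an endpoint of the still-missing linear interval. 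For Type A the descent set decomposes as: descents inside $\tau$ range freely over all subsets of $\{1,\dots,k-2\}$; position $k-1$ is always an ascent (a value in $\{1,\dots,k-1\}$ precedes $n$); position $k$ is always a descent (from $n$ to a smaller value); descents inside $\sigma$ shift by $j \mapsto k+j$ into an arbitrary subset of $\{k+1,\dots,n-1\}$, using the right-unimodal descent bijection $\D(\sigma) = \{i : \sigma(i) > \sigma(n-k)\}$ dual to the left-unimodal case. This gives Type A contribution $x_k \prod_{j \ne k-1,k}(1+x_j)$, and Type B contributes $x_{k-1} \prod_{j \ne k-1,k}(1+x_j)$ by the symmetric analysis, summing to $(x_{k-1}+x_k)\prod_{j \ne k-1,k}(1+x_j)$. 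Summing over $k$ with the substitution $i = k-1$ and adding the $\LL_n$ term yields
$$
\prod_{j=1}^{n-1}(1+x_j) \;+\; \sum_{i=1}^{n-2}(x_i+x_{i+1}) \prod_{j \ne i,i+1}(1+x_j),
$$
which equals the right-hand side of the proposition after factoring out $\prod_{j=1}^{n-1}(1+x_j)$.

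The main obstacle will be the careful case analysis in the Type~A/Type~B decomposition together with the descent bookkeeping at the two boundary positions $k-1$ and $k$: one must verify that the cyclic-interval constraint forces $\pi(k-1)$ and $\pi(k+1)$ to lie on the correct side of the extreme value $\pi(k) \in \{1,n\}$ (so that the boundary ascent/descent pattern is forced), and one must establish the right-unimodal descent-set bijection parallel to the left-unimodal one. After those ingredients are in place, the remaining summation is routine bookkeeping.
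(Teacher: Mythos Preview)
Your proposal is correct and follows essentially the same approach as the paper. Your wrap index $k$ coincides with the paper's $i+1$, where $i=\max\{\pi^{-1}(1),\pi^{-1}(n)\}-1$, so the partition of $\A_n$ and the resulting case analysis (your Type~A/Type~B versus the paper's two cases $\pi(i+1)=n$ and $\pi(i+1)=1$) are identical; the only difference is presentational, in that the paper phrases the descent bookkeeping for the suffix via the grid pictures of Figure~\ref{fig:grid_arc}, whereas you argue directly that the suffix is right-unimodal.
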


\begin{proof}
Let $\pi\in\A_n$, and let $i=\max\{\pi^{-1}(1),\pi^{-1}(n)\}-1$.

If $i=n-1$, then $\pi$ can be drawn on the picture on the left of
Figure~\ref{fig:grid_arc_geo}.
The generating function for these permutations with respect to the
descent set is $(1+x_1)\dots(1+x_{n-1})$. Indeed, each $\pi(j)$ for
$2\le j\le n$ is either larger or smaller than all the previous
entries, and causes a descent with $\pi(j-1)$ only in the second
case (this is when $\pi(j)$ corresponds to a dot on the descending
slope in the picture). So, $\pi(j)$ contributes a factor
$1+x_{j-1}$ to the generating function.

Let us now consider permutations with fixed $i$, with $1\le i\le
n-2$. Since $\pi(1)\dots\pi(i)$ can be drawn on the picture on the
left of Figure~\ref{fig:grid_arc_geo}, the same reasoning as above
shows that the contribution of the descents of $\pi(1)\dots\pi(i)$
to the generating function is $(1+x_1)\dots(1+x_{i-1})$. Now we
have $\pi(i+1)\in\{1,n\}$, by the choice of $i$. If $\pi(i+1)=1$
(resp. $\pi(i+1)=n$), then we can draw $\pi$ on the picture on the
left (resp. right) of Figure~\ref{fig:grid_arc}, with $\pi(i+1)$
being the first entry to the right of the vertical dotted line. In
this case, the descent $\pi(i)\pi(i+1)$ contributes $x_{i}$
(resp. the descent $\pi(i+1)\pi(i+2)$ contributes $x_{i+1}$) to
the generating function. In both cases, each one of the entries
$\pi(j)$ with $i+2\le j\le n-1$ will produce a descent
$\pi(j)\pi(j+1)$ iff the corresponding dot is on the descending
slope to the right of the dotted line. Thus, $\pi(j)$ contributes a
factor $1+x_{j}$ for each $i+2\le j\le n-1$.

Combining all these contributions we get the generating function
\begin{eqnarray*}\sum_{\pi\in\A_n}\x^{\D(\pi)}&=&(1+x_1)\dots(1+x_{n-1})+\sum_{i=1}^{n-2}(1+x_1)\dots(1+x_{i-1})(x_i+x_{i+1})(1+x_{i+2})\dots(1+x_{n-1})\\
&=&(1+x_1)\dots(1+x_{n-1})\left(1+\sum_{i=1}^{n-2}\frac{x_i+x_{i+1}}{(1+x_i)(1+x_{i+1})}\right).
\end{eqnarray*}
\end{proof}

\begin{corollary}\label{prop:majUn} Let $\maj(\pi)$ denote the major index of $\pi$. For $n\ge2$,
$$\sum_{\pi\in\A_n}q^{\maj(\pi)}=(1+q)\dots(1+q^{n-2})[n]_q.$$
\end{corollary}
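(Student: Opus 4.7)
The plan is to derive this corollary from Proposition~\ref{prop:DUn} by the standard specialization $x_i \mapsto q^i$, which converts the descent-set generating function into the major-index generating function since $\maj(\pi) = \sum_{i \in \D(\pi)} i$. After this substitution, the statement reduces to the identity
\[
(1+q^{n-1})\!\left(1+\sum_{i=1}^{n-2}\frac{q^i+q^{i+1}}{(1+q^i)(1+q^{i+1})}\right) = [n]_q,
\]
obtained by dividing both sides of the desired formula by $(1+q)(1+q^2)\cdots(1+q^{n-2})$.

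The key algebraic observation is that the summand telescopes. I would verify by direct partial-fraction expansion that
\[
\frac{q^i+q^{i+1}}{(1+q^i)(1+q^{i+1})} = \frac{1+q}{1-q}\!\left(\frac{1}{1+q^{i+1}} - \frac{1}{1+q^i}\right),
\]
which follows immediately from putting the right-hand side over a common denominator and using $q^i - q^{i+1} = q^i(1-q)$. Summing this identity over $i=1,\dots,n-2$ collapses to
\[
\sum_{i=1}^{n-2}\frac{q^i+q^{i+1}}{(1+q^i)(1+q^{i+1})} = \frac{1+q}{1-q}\!\left(\frac{1}{1+q^{n-1}} - \frac{1}{1+q}\right) = \frac{q(1-q^{n-2})}{(1-q)(1+q^{n-1})}.
\]

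Adding $1$ and combining over the common denominator $(1-q)(1+q^{n-1})$, a short simplification — the numerator becomes $(1-q)(1+q^{n-1}) + q(1-q^{n-2}) = 1 - q^n$ — yields
\[
1+\sum_{i=1}^{n-2}\frac{q^i+q^{i+1}}{(1+q^i)(1+q^{i+1})} = \frac{1-q^n}{(1-q)(1+q^{n-1})} = \frac{[n]_q}{1+q^{n-1}}.
\]
Multiplying through by $(1+q)(1+q^2)\cdots(1+q^{n-1})$ then gives the claimed formula. There is no real obstacle here — the only non-obvious step is spotting the telescoping decomposition, and once that is in hand the rest is a routine simplification of rational functions in $q$.
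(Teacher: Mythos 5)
Your proof is correct and follows essentially the same route as the paper: specialize $x_i\mapsto q^i$ in Proposition~\ref{prop:DUn}, telescope the sum $\sum_i \frac{q^i+q^{i+1}}{(1+q^i)(1+q^{i+1})}$ via a partial-fraction decomposition, and simplify. The only difference is cosmetic — you absorb the factor $1+q$ into the telescoping identity rather than factoring it out first.
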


\begin{proof}
The generating function for the major index is obtained by
replacing $x_i$ with $q^i$ for each $1\le i\le n-1$ in the formula
from Proposition~\ref{prop:DUn}:
$$\sum_{\pi\in\A_n}q^{\maj(\pi)}=(1+q)\dots(1+q^{n-1})\left(1+(1+q)\sum_{i=1}^{n-2}\frac{q^i}{(1+q^i)(1+q^{i+1})}\right).$$
The summation inside the parentheses can be simplified as
\begin{eqnarray*}\sum_{i=1}^{n-2}\frac{q^i}{(1+q^i)(1+q^{i+1})}&=&
\frac{1}{q-1}\sum_{i=1}^{n-2}\left(\frac{1}{1+q^i}-\frac{1}{1+q^{i+1}}\right)\\
&=&\frac{1}{q-1}\left(\frac{1}{1+q}-\frac{1}{1+q^{n-1}}\right)=\frac{q-q^{n-1}}{(1-q)(1+q)(1+q^{n-1})}.
\end{eqnarray*}
Putting it back in the original equation,
\begin{eqnarray*}\sum_{\pi\in\A_n}q^{\maj(\pi)}&=&
(1+q)\dots(1+q^{n-1})\left(1+\frac{q-q^{n-1}}{(1-q)(1+q^{n-1})}\right)\\
&=&(1+q)\dots(1+q^{n-2})\left(1+q^{n-1}+q\frac{1-q^{n-2}}{1-q}\right)=(1+q)\dots(1+q^{n-2})[n]_q.
\end{eqnarray*}
\end{proof}

\subsection{Non-unimodal arc permutations}

Recall that $\Z_n$ denotes the set of arc permutations that are not unimodal. Let $f^\lambda$ denote the number of standard Young tableaux of shape $\lambda$.

\begin{proposition}\label{non-unimodal-enumeration}
For $n\ge4$,
$$
|\Z_n|=\sum\limits_{k=2}^{n-2} f^{(k,2,1^{n-k-2})}.
$$
\end{proposition}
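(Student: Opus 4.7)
The plan is to compute both sides explicitly and match them. First I would use Claims~\ref{arc-enumeration} and~\ref{claimU} to obtain
$$|\Z_n| = |\A_n| - |\U_n| = n\cdot 2^{n-2} - (2^n - 2) = (n-4)\cdot 2^{n-2} + 2.$$
So it suffices to show that the right-hand side of the asserted identity equals $(n-4)\cdot 2^{n-2} + 2$.

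Next I would apply the hook length formula to $\lambda=(k,2,1^{n-k-2})$, for $2\le k\le n-2$. A direct reading of arm and leg lengths gives hook $n-1$ at $(1,1)$; hook $k$ at $(1,2)$ and hooks $k-j+1$ at $(1,j)$ for $3\le j\le k$; hook $n-k$ at $(2,1)$, hook $1$ at $(2,2)$, and hooks $n-k-i+1$ at $(i,1)$ for $3\le i\le n-k$. The product of hooks is $(n-1)\,k\,(k-2)!\,(n-k)\,(n-k-2)!$, so
$$f^{(k,2,1^{n-k-2})} = \frac{n!}{(n-1)\,k\,(n-k)\,(k-2)!\,(n-k-2)!} = \frac{(k-1)(n-k-1)}{n-1}\binom{n}{k},$$
where the last equality uses $k(k-2)! = k!/(k-1)$ and $(n-k)(n-k-2)! = (n-k)!/(n-k-1)$.

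The remaining step is a binomial-sum calculation. Writing $(k-1)(n-k-1) = k(n-k) - (n-1)$ and using the standard identity $k(n-k)\binom{n}{k} = n(n-1)\binom{n-2}{k-1}$, one gets
$$\sum_{k=2}^{n-2} k(n-k)\binom{n}{k} = n(n-1)\left(2^{n-2}-2\right) \qquad\text{and}\qquad \sum_{k=2}^{n-2}\binom{n}{k} = 2^n - 2n - 2,$$
after stripping the boundary terms $k\in\{0,1,n-1,n\}$. Combining these,
$$\sum_{k=2}^{n-2}(k-1)(n-k-1)\binom{n}{k} = n(n-1)(2^{n-2}-2) - (n-1)(2^n-2n-2) = (n-1)\bigl[(n-4)2^{n-2} + 2\bigr].$$
Dividing by $n-1$ reproduces $(n-4)\cdot 2^{n-2} + 2 = |\Z_n|$, which finishes the proof.

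The only non-routine step is the bookkeeping in the hook length computation (particularly handling the boundary cases $k=2$ and $k=n-2$, where some of the index ranges are empty and one should confirm that the conventions $0!=1$ give the correct product). Once the closed form $f^{(k,2,1^{n-k-2})} = \frac{(k-1)(n-k-1)}{n-1}\binom{n}{k}$ is in hand, the identity reduces to elementary binomial manipulations. I would note, as a sanity check, the small cases $n=4$ (sum is $f^{(2,2)}=2$) and $n=5$ (sum is $f^{(3,2)}+f^{(2,2,1)}=5+5=10$), which agree with $(n-4)2^{n-2}+2$.
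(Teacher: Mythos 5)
Your proposal is correct and follows the same route as the paper: compute $|\Z_n|=|\A_n|-|\U_n|=(n-4)2^{n-2}+2$ from Claims~\ref{arc-enumeration} and~\ref{claimU}, evaluate $f^{(k,2,1^{n-k-2})}=\frac{(k-1)(n-k-1)}{n-1}\binom{n}{k}$ by the hook length formula, and finish with binomial manipulations. The only difference is that you carry out explicitly the binomial-coefficient computation that the paper leaves to the reader, and your hook bookkeeping and sanity checks are all accurate.
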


\begin{proof}
By Claims~\ref{claimU}
and~\ref{arc-enumeration},
it is clear that
$$|\Z_n|=n2^{n-2}-2^n+2=2^{n-2}(n-4)+2.$$
On the other hand, using the hook-length formula, we obtain
$$\sum\limits_{k=2}^{n-2}
f^{(k,2,1^{n-k-2})}=\sum\limits_{k=2}^{n-2}\frac{(k-1)(n-k-1)}{n-1}\binom{n}{k}=2^{n-2}(n-4)+2,$$
where the last step follows from easy manipulations of binomial coefficients.
\end{proof}

\begin{proposition}For $n\ge2$,
$$\sum_{\pi\in\Z_n}\x^{\D(\pi)}=\sum_{\pi\in\A_n}\x^{\D(\pi)}-2(1+x_1)\dots(1+x_{n-1})+1+x_1\dots x_{n-1}.$$
\end{proposition}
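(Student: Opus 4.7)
The plan is to exploit the disjoint decomposition $\A_n=\U_n\sqcup\Z_n$, which rewrites the desired identity as the equivalent assertion
$$
\sum_{\pi\in\U_n}\x^{\D(\pi)}=2\prod_{j=1}^{n-1}(1+x_j)-1-x_1x_2\cdots x_{n-1}.
$$
Thus the entire content reduces to computing the descent generating function over $\U_n$.

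By definition, $\U_n=\LL_n\cup(w_0\LL_n w_0)$, and, as used in the proof of Claim~\ref{claimU}, the two sets meet only in $e=12\cdots n$ and $w_0=n\cdots21$, whose descent monomials are $\x^{\D(e)}=1$ and $\x^{\D(w_0)}=x_1x_2\cdots x_{n-1}$. By inclusion-exclusion it thus suffices to establish that
$$
\sum_{\pi\in\LL_n}\x^{\D(\pi)}=\sum_{\pi\in w_0\LL_n w_0}\x^{\D(\pi)}=\prod_{j=1}^{n-1}(1+x_j).
$$

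For $\LL_n$ I would use the binary encoding underlying Claim~\ref{claimL}: each $\pi\in\LL_n$ is uniquely determined by bits $(b_2,\dots,b_n)\in\{0,1\}^{n-1}$, where $b_j=1$ if $\pi(j)=\max\{\pi(1),\dots,\pi(j-1)\}+1$ (a ``new top'') and $b_j=0$ if $\pi(j)=\min\{\pi(1),\dots,\pi(j-1)\}-1$ (a ``new bottom''). A direct check shows that $j-1\in\D(\pi)$ iff $b_j=0$: a new bottom is strictly smaller than every earlier entry (in particular smaller than $\pi(j-1)$, creating a descent at $j-1$), while a new top is strictly larger than every earlier entry (creating an ascent there). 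Since the $n-1$ bits are independent, the claimed product formula for $\LL_n$ follows immediately.

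For $w_0\LL_n w_0$, the reverse-complement involution $\pi\mapsto w_0\pi w_0$ bijects $\LL_n$ onto $w_0\LL_n w_0$ and transforms the descent set by $\D(w_0\pi w_0)=\{n-j:j\in\D(\pi)\}$; since the product $\prod_{j=1}^{n-1}(1+x_j)$ is invariant under the substitution $x_j\leftrightarrow x_{n-j}$, the same generating function results. Plugging both into the inclusion-exclusion formula above yields the displayed identity for $\U_n$, and hence the proposition. The only step requiring genuine verification is the descent characterization for left-unimodal permutations in terms of the binary encoding; this is essentially immediate from the top/bottom description, so I do not anticipate a real obstacle.
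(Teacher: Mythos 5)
Your proof is correct and follows essentially the same route as the paper: reduce to $\U_n$ via the disjoint union $\A_n=\U_n\sqcup\Z_n$, split $\U_n$ into two copies of a set with descent generating function $\prod_{j=1}^{n-1}(1+x_j)$, and correct for the overlap $\{e,w_0\}$. The only cosmetic difference is that the paper computes the second half directly from the grid picture, while you obtain it from the first half via the reverse-complement symmetry $\pi\mapsto w_0\pi w_0$; both are sound.
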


\begin{proof}
Since $\Z_n=\A_n\setminus\U_n$, the statement to be proved is equivalent to
$$\sum_{\pi\in\U_n}\x^{\D(\pi)}=2(1+x_1)\dots(1+x_{n-1})-1-x_1\dots x_{n-1}.$$
Unimodal arc permutations are those that can be drawn on one of
the pictures in Figure~\ref{fig:grid_arc_geo}. We have shown in the
proof of Proposition~\ref{prop:DUn} that for permutations that can
be drawn on the left picture, the generating function for the
descent set is $(1+x_1)\dots(1+x_{n-1})$. We obtain the same
generating function for permutations that can be drawn on the
right picture, since each $\pi(j)$ for $1\le j\le n-1$ causes a
descent with $\pi(j+1)$ iff it is drawn on the descending slope of
the grid, thus contributing a factor $1+x_{j}$. Finally, the we
have to subtract the contribution of the only two permutations
that can be drawn on both grids, which are $12\dots n$ and
$n\dots21$.
\end{proof}

\begin{corollary}For $n\ge2$,
$$\sum_{\pi\in\Z_n}q^{\maj(\pi)}=(1+q)\dots(1+q^{n-2})[n]_q-2(1+q)\dots(1+q^{n-1})+1+q^{\binom{n}{2}}.$$
\end{corollary}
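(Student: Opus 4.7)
The plan is to obtain this corollary as a direct specialization of the preceding proposition. Recall that the major index of $\pi$ is $\maj(\pi)=\sum_{i\in\D(\pi)}i$, so substituting $x_i\mapsto q^i$ in any descent-set generating function produces the corresponding major-index generating function. Thus I will simply apply the substitution $x_i=q^i$ (for $1\le i\le n-1$) to both sides of the identity
$$\sum_{\pi\in\Z_n}\x^{\D(\pi)}=\sum_{\pi\in\A_n}\x^{\D(\pi)}-2(1+x_1)\dots(1+x_{n-1})+1+x_1\dots x_{n-1}$$
proved in the previous proposition.

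After substitution, the first term on the right becomes $(1+q)(1+q^2)\cdots(1+q^{n-2})[n]_q$ by Corollary~\ref{prop:majUn}. The second term becomes $2(1+q)(1+q^2)\cdots(1+q^{n-1})$ verbatim. For the third term, the exponent of $q$ in $x_1x_2\cdots x_{n-1}$ specializes to $1+2+\cdots+(n-1)=\binom{n}{2}$, giving $1+q^{\binom{n}{2}}$. Assembling these three contributions yields the claimed formula.

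There is no real obstacle: the only thing to check is the compatibility of the substitution with $\maj$, which is standard, plus the arithmetic $\sum_{i=1}^{n-1}i=\binom{n}{2}$. Accordingly, the proof will be a two-line derivation invoking Corollary~\ref{prop:majUn} and the previous proposition.
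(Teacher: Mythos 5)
Your proposal is correct and is exactly how the paper obtains this corollary: it follows immediately from the preceding proposition by the substitution $x_i\mapsto q^i$, using Corollary~\ref{prop:majUn} for the arc-permutation term and the identity $\sum_{i=1}^{n-1}i=\binom{n}{2}$ for the last term.
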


\subsection{Standard Young tableaux of shape $(k,2,1^{n-k-2})$}

Let $\Hk_n$ be the set of standard Young tableaux of shape
$(k,1^{n-k})$ (a hook) for some $1\le k\le n$. Let $\T_n$ be the
set of standard Young tableaux of shape $(k,2,1^{n-k-2})$ for some
$2\le k\le n-2$.

\begin{lemma}
$$\sum_{T\in\Hk_n}\x^{\D(T)}=(1+x_1)\dots(1+x_{n-1}).$$
\end{lemma}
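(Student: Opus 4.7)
The plan is to give a bijection between $\Hk_n$ and the power set of $\{1,\dots,n-1\}$, under which a hook tableau is sent to its descent set; the generating function identity will then be immediate.

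First, I would parameterize $\Hk_n$ concretely. A standard Young tableau $T$ of hook shape $(k,1^{n-k})$ is completely determined by specifying which entries lie in the first row. Since $1$ must sit at the corner $(1,1)$, the tableau is encoded by a set $S\subseteq\{2,3,\dots,n\}$ consisting of the entries placed in the first row to the right of the corner; the remaining entries $\{2,\dots,n\}\setminus S$ fill the first column below the corner, and both arm and leg are filled in increasing order. Any $S$ produces a valid SYT of some hook shape, so this gives a bijection $\Hk_n\leftrightarrow 2^{\{2,\dots,n\}}$, confirming in passing that $|\Hk_n|=2^{n-1}$.

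Next I would analyze when $i\in\{1,\dots,n-1\}$ is a descent of $T$. By definition $i\in\D(T)$ iff $i+1$ lies strictly below the row containing $i$. A short case check on whether each of $i$, $i+1$ belongs to the arm or the leg covers four possibilities: if $i,i+1\in S$ both are in row $1$ and there is no descent; if $i\in S$, $i+1\notin S$ then $i+1$ is in the leg, below row $1$, giving a descent; if $i\notin S$, $i+1\in S$ then $i+1$ is up in row $1$, so not a descent; and if $i,i+1\notin S$ both are in the leg with $i+1$ directly below $i$, giving a descent. The boundary case $i=1$ works identically since $1$ sits in row~$1$. Uniformly, $i\in\D(T)$ if and only if $i+1\notin S$.

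Finally, since $i\mapsto i+1$ is a bijection $\{1,\dots,n-1\}\to\{2,\dots,n\}$, the correspondence $S\mapsto\D(T)=\{i:i+1\notin S\}$ shows that $T\mapsto\D(T)$ is a bijection from $\Hk_n$ onto the collection of all subsets of $\{1,\dots,n-1\}$. Therefore
$$\sum_{T\in\Hk_n}\x^{\D(T)}=\sum_{D\subseteq\{1,\dots,n-1\}}\x^D=\prod_{i=1}^{n-1}(1+x_i),$$
which is the claim. There is no serious obstacle here; the only step requiring care is the four-way case analysis in the previous paragraph, and in particular checking that the apparently asymmetric case $i\notin S$, $i+1\in S$ really does yield a non-descent (as it must, since $i+1$ ends up higher than $i$ rather than lower).
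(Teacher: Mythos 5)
Your proof is correct and is essentially the paper's argument: the paper encodes a hook tableau by the set of entries $j\ge 2$ lying in the first column and observes that $j-1$ is a descent iff $j$ is in that column, which is exactly your condition $i\in\D(T)\iff i+1\notin S$ stated via the complementary set. The only difference is that you spell out the four-case verification that the paper leaves implicit.
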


\begin{proof}
Any $T\in\Hk_n$ has a $1$ in the upper-left corner. $T$ is now
determined by the set of entries $j$ with $2\le j\le n$ that are in
the first column, since the rest have to be in the first row. Each such
$j$ creates a descent with $j-1$ iff it is in the first column,
which gives the contribution $1+x_{j-1}$ to the generating
function.
\end{proof}

\begin{theorem}\label{thm:TZ}
For $n\ge4$,
$$\sum_{T\in\T_n}\x^{\D(T)}=\sum_{\pi\in\Z_n}\x^{\D(\pi)}.$$
\end{theorem}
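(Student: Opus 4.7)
The plan is algebraic: I will compute $\sum_{T\in\T_n}\x^{\D(T)}$ directly and verify that it agrees with the explicit expression for $\sum_{\pi\in\Z_n}\x^{\D(\pi)}$ established in the previous proposition. The starting point is an encoding analogous to the one implicit in the hook lemma above. Each $T\in\T_n$ of shape $(k,2,1^{n-k-2})$ corresponds to a unique word $v=v(T)\in\{L,R,C\}^{n-1}$, where $v_i$ equals $R$, $L$, or $C$ according to whether the entry $i+1$ lies in row~$1$, in the first column below row~$1$, or in the corner box $(2,2)$, respectively. There is exactly one letter $C$, at position $p=T(2,2)-1$, and the requirements $T(1,2)<T(2,2)$ and $T(2,1)<T(2,2)$ translate to the condition that both $L$ and $R$ appear among $v_1,\dots,v_{p-1}$; in particular $p\ge 3$.

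Next I would describe the descent set of $T$ directly from $v(T)$. A short case analysis, tracking the rows of consecutive entries $i$ and $i+1$ as a function of $v_{i-1}$ and $v_i$, shows that $i\in\D(T)$ if and only if $v_i=L$, or $v_i=C$ with $v_{i-1}=R$. The subtle situation is when $v_{p-1}=L$: then the first $L$ and the corner $C$ both sit in row~$2$, so position $p$ contributes no descent.

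Splitting the sum over the position $p$ of the $C$, and further over $v_{p-1}\in\{L,R\}$, with an inclusion-exclusion to enforce the presence of the ``other'' letter among $v_1,\dots,v_{p-2}$, yields
\begin{equation*}
\sum_{T\in\T_n}\x^{\D(T)} \;=\; \sum_{p=3}^{n-1}\prod_{j=p+1}^{n-1}(1+x_j)\left[x_p\Bigl(\prod_{j=1}^{p-2}(1+x_j)-1\Bigr)+x_{p-1}\Bigl(\prod_{j=1}^{p-2}(1+x_j)-\prod_{j=1}^{p-2}x_j\Bigr)\right].
\end{equation*}

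Finally, I would expand the preceding proposition's formula for $\sum_{\pi\in\Z_n}\x^{\D(\pi)}$ and match term by term, using the telescoping identity $\sum_{p=a}^{b}x_p\prod_{j=p+1}^{b}(1+x_j)=\prod_{j=a}^{b}(1+x_j)-1$ and an analogous identity involving the monomial $x_1\cdots x_{p-2}$ to handle the negative correction terms. The main obstacle is the meticulous descent case analysis in the second step, together with the tedious-but-routine algebraic verification that the two generating functions coincide; the small-$n$ check $n=4$, where both sides equal $x_2+x_1x_3$, confirms the computation.
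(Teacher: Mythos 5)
Your proposal is correct and follows essentially the same route as the paper: conditioning on the position of the corner entry $(2,2)$ and on whether the immediately preceding entry sits in the first row or first column yields exactly the paper's intermediate sum, which is then matched to the known formula for $\sum_{\pi\in\Z_n}\x^{\D(\pi)}$ via the same telescoping identities. The only difference is cosmetic (your $\{L,R,C\}$ word encoding versus the paper's direct tableau reasoning), and your descent analysis and inclusion--exclusion for the constraint are sound.
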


\begin{proof}
Given $T\in\T_n$, let $i+2$ be the element in the box in the second
row and second column. Note that $2\le i\le n-2$. There are two
possibilities for $i+1$: it is either in the first row or in the
first column. \bit \item If $i+1$ is in the first row, the entries
$1,2,\dots,i$ form an arbitrary hook with more than one row. As in
the above lemma, the descents of these entries contribute
$(1+x_1)\dots(1+x_{i-1})-1$ to the generating function, with the
$-1$ corresponding to the invalid one-row tableau. Now $i$ is not
a descent of $T$ but $i+1$ is, producing a factor $x_{i+1}$. Each
one of the remaining entries $j$ with $i+3\le j\le n$ can be in
the first row or in the first column, and it creates a descent
with $j-1$ iff it is in the first column, which gives the
contribution $1+x_{j-1}$ to the generating function. Thus, this
case gives a summand
$$[(1+x_1)\dots(1+x_{i-1})-1]x_{i+1}(1+x_{i+2})\dots(1+x_{n-1}).$$
\item If $i+1$ is in the first column, the entries $1,2,\dots,i$
form an arbitrary hook with more than one column. The descents of
these entries contribute $(1+x_1)\dots(1+x_{i-1})-x_1\dots
x_{i-1}$ to the generating function, subtracting the invalid
one-column tableau. Now $i$ is a descent of $T$ but $i+1$ is not,
producing a factor $x_{i}$. As in the previous case, each one of
the remaining entries $j$ with $i+3\le j\le n$ contributes a
factor $1+x_{j-1}$ to the generating function. Thus, this case
gives a summand $$[(1+x_1)\dots(1+x_{i-1})-x_1\dots
x_{i-1}]x_{i}(1+x_{i+2})\dots(1+x_{n-1}).$$ \eit We have proved
that
\begin{eqnarray}\nn\sum_{T\in\T_n}\x^{\D(T)}&=&\sum_{i=2}^{n-2}[(1+x_1)\dots(1+x_{i-1})-1]x_{i+1}(1+x_{i+2})\dots(1+x_{n-1})\\
\nn&&+\sum_{i=2}^{n-2}[(1+x_1)\dots(1+x_{i-1})-x_1\dots x_{i-1}]x_{i}(1+x_{i+2})\dots(1+x_{n-1})\\
\nn&=&\sum_{i=1}^{n-2}(1+x_1)\dots(1+x_{i-1})(x_i+x_{i+1})(1+x_{i+2})\dots(1+x_{n-1})\\
\label{eq:Tnsum}&&-\sum_{i=1}^{n-2}(x_{i+1}+x_1\dots
x_{i})(1+x_{i+2})\dots(1+x_{n-1}).
\end{eqnarray}
The last sum above can be simplified using the following two telescopic sums.
\begin{multline*}\sum_{i=1}^{n-2}x^{i+1}(1+x^{i+2})\dots(1+x^{n-1})=
\sum_{i=1}^{n-2}(1+x^{i+1})(1+x^{i+2})\dots(1+x^{n-1})-(1+x^{i+2})\dots(1+x^{n-1})\\
=(1+x_2)\dots(1+x_{n-1})-1,
\end{multline*}
\begin{multline*}\sum_{i=1}^{n-2}x_1\dots x_{i}(1+x_{i+2})\dots(1+x_{n-1})\\
=\sum_{i=1}^{n-2}x_1\dots x_{i}(1+x_{i+1})(1+x_{i+2})\dots(1+x_{n-1})-x_1\dots x_{i}x_{i+1}(1+x_{i+2})\dots(1+x_{n-1})\\
=x_1(1+x_2)\dots(1+x_{n-1})-x_1\dots x_{n-1}.\end{multline*}

Plugging these formulas back into equation~(\ref{eq:Tnsum}) we get
\begin{eqnarray*}\sum_{T\in\T_n}\x^{\D(T)}&=&\sum_{i=1}^{n-2}(1+x_1)\dots(1+x_{i-1})(x_i+x_{i+1})(1+x_{i+2})\dots(1+x_{n-1})\\
&&\qquad -(1+x_1)(1+x_2)\dots(1+x_{n-1})+1+x_1\dots x_{n-1}\\
&=&\sum_{\pi\in\A_n}\x^{\D(\pi)}-2(1+x_1)(1+x_2)\dots(1+x_{n-1})+1+x_1\dots
x_{n-1}=\sum_{\pi\in\Z_n}\x^{\D(\pi)}.
\end{eqnarray*}
\end{proof}

\subsection{A bijective proof}\label{sec:bijZT}

We now give a bijection $\phi$ between
$\Z_n$ and $\T_n$ that preserves the descent set, providing an alternative proof of Theorem~\ref{thm:TZ}.

Given $\pi\in\Z_n$ with $n\ge4$, consider two cases. If
$\pi^{-1}(1)>\pi^{-1}(n)$ (this happens iff $\pi$ can be drawn on
the picture on the left of Figure~\ref{fig:grid_arc}, and also iff
$\pi(1)>\pi(n)$), let $j=\pi^{-1}(1)$, and let
$$I=\{i:\pi(i)\ge\pi(1)\}\cup\{i:i>j+1\mbox{ and }\pi(i-1)<\pi(n)\}.$$
Then $\phi(\pi)\in\T_n$ is the tableau having the elements of $I$
in the first row, having $j+1$ in the box in the second row and
column, and having the rest of the elements of $[n]=\{1,2,\dots,n\}$ in the first
column.

If $\pi^{-1}(1)<\pi^{-1}(n)$ (this happens iff $\pi$ can be drawn
on the picture on the right of Figure~\ref{fig:grid_arc}, and also
iff $\pi(1)<\pi(n)$), let $j=\pi^{-1}(n)$, and let
$$I=\{i:\pi(i)\le\pi(1)\}\cup\{i:i>j+1\mbox{ and }\pi(i-1)>\pi(n)\}.$$
Then $\phi(\pi)\in\T_n$ is the tableau having the elements of $I$
in the first column, having $j+1$ in the box in the second row and
column, and having the rest of the elements of $[n]$ in the first
row.

For example, if $$\pi=\left(\begin{array}{ccccccccccc}
                         1 & 2 & 3 & 4 & 5 & 6 & 7 & 8 & 9 & 10 & 11 \\
                         8 & 9 & 10 & 7 & 11 & 1 & 2 & 6 & 5 & 3 & 4 \\
                       \end{array}
                     \right),$$ then $j=6=\pi^{-1}(1)>\pi^{-1}(n)=5$,
$I=\{1,2,3,5,8,11\}$, and $\phi(\pi)$ is the tableau
$$\young(12358\eleven,47,6,9,\ten).$$ If
$$\pi=\left(\begin{array}{cccccccccccc}
                         1 & 2 & 3 & 4 & 5 & 6 & 7 & 8 & 9 & 10 & 11 & 12\\
                         2 & 3 & 1 & 4 & 5 & 6 & 12 & 7 & 11 & 10 & 8 & 9 \\
                       \end{array}
                     \right),$$ then $j=7=\pi^{-1}(n)>\pi^{-1}(1)=3$,
$I=\{1,3,10,11\}$, and $\phi(\pi)$ is the tableau
$$\young(1245679\twelve,38,\ten,\eleven).$$

In the case $\pi^{-1}(1)>\pi^{-1}(n)$, the descent set of both $\pi$ and $\phi(\pi)$ equals $\{i:i+1\notin I,\,i\ne j\}$.
In the case $\pi^{-1}(1)<\pi^{-1}(n)$, the descent set of $\pi$ and $\phi(\pi)$ equals $\{i:i+1\notin I\}\cup\{j\}$.
To see that $\phi$ is a bijection, note that given a tableau in $T\in\T_n$ with entry $j+1$ in the box in the second row and column, 
we can distinguish the two cases by checking whether $j$ is in the first row (case $\pi^{-1}(1)<\pi^{-1}(n)$) or not (case $\pi^{-1}(1)>\pi^{-1}(n)$).
In both cases, the set $I$ can be immediately recovered from the tableau, and together with the value of $j$, it uniquely determines the permutation $\phi^{-1}(T)$.

\subsection{A shape-preserving bijection}

Here we give another bijection $\psi$ between $\Z_n$ and $\T_n$.
It has the property that $\psi(\pi)$ has the same shape as $\phi(\pi)$,
although it does not preserve the descent set. Another property is that the entries in the tableau $\psi(\pi)$
that are weakly north and strictly east of $2$ are the elements in
the set $C(\pi)$, defined as follows.

\begin{definition}
For $\pi\in\A_n$, let $C(\pi)$ be the set of values $i\in\{3,4,\dots,n\}$ such that
$\ol{\pi(i-1)-1}\in\{\pi(1),\pi(2),\dots,\pi(i-2)\}$.
\end{definition}

Let $\pi\in\Z_n$ with $n\ge4$. If $\pi^{-1}(1)>\pi^{-1}(n)$, let
$j=\pi^{-1}(1)$, and let
$$S=\{1\}\cup\{i+1:\pi(1)\ge \pi(i)> \pi(n)\}=[n]\setminus C(\pi).$$
Then $\psi(\pi)$ is the tableau whose
first column is $S$, having $j+1$ in the box in the second row and
column, and having the rest of the elements in $[n]$ in the first
row.

If $\pi^{-1}(1)>\pi^{-1}(n)$, let $j=\pi^{-1}(n)$, and let
$$S=\{1\}\cup\{i+1:\pi(1)\le \pi(i)<\pi(n)\}=\{1,2\}\cup C(\pi).$$
Then $\psi(\pi)$ is the tableau
whose first row is $S$, having $j+1$ in the box in the second row
and column, and having the rest of the elements in $[n]$ in the
first column.

For example, if $$\pi=\left(\begin{array}{cccccccccccc}
                         1 & 2 & 3 & 4 & 5 & 6 & 7 & 8 & 9 & 10 & 11 & 12\\
                         10 & 9 & 11 & 8 & 7 & 12 & 1 & 2 & 6 & 5 & 3 & 4 \\
                       \end{array}
                     \right),$$ then $j=7=\pi^{-1}(1)$,
$S=\{1,2,3,5,6,10,11\}$, and $\psi(\pi)$ is the tableau
$$\young(1479\twelve,28,3,5,6,\ten,\eleven).$$ If
$$\pi=\left(\begin{array}{ccccccccccc}
                         1 & 2 & 3 & 4 & 5 & 6 & 7 & 8 & 9 & 10 & 11 \\
                         3 & 2 & 4 & 1 & 5 & 6 & 11 & 7 & 10 & 9 & 8 \\
                       \end{array}
                     \right),$$ then $j=7=\pi^{-1}(n)$,
$S=\{1,2,4,6,7,9\}$, and $\psi(\pi)$ is the tableau
$$\young(124679,38,5,\ten,\eleven).$$

The map $\psi$ is clearly invertible, since given a tableau $T\in\T_n$, the position of the entry $2$ determines which of the cases $\pi^{-1}(1)>\pi^{-1}(n)$ or $\pi^{-1}(1)>\pi^{-1}(n)$ we are in.
In both cases, the set $S$ and the value of $j$, which are immediately recovered from the tableau, uniquely determine the permutation $\psi^{-1}(T)$. The fact that $\phi(\pi)$ and $\psi(\pi)$ have the same shape
follows by noting that in both cases $|I|+|S|=n$, with $I$ as defined in Subsection~\ref{sec:bijZT}, and thus the tableaux $\phi(\pi)$ and $\psi(\pi)$, which consist of a hook plus a box, have the same number of rows.

\section{Encoding by descents}\label{sec:encoding}

In this section we present an encoding of arc permutations that is
different from the one used in Section~\ref{X-graph}. This
encoding, which keeps track of the positions of the descents, is applied to prove further enumerative results,
some of which will be used in Section~\ref{sec:regev}.

\subsection{The encoding}\label{sec:encoding-by-descents}

Let $\WW_n$ be the set of words $\ww=w_1w_2\dots w_{n-1}$ over the
alphabet $\{A,D\}$ where at most one adjacent pair $AD$ or $DA$
may be underlined. We encode arc permutations by words in $\WW_n$ via the following bijection $\enc$.

\begin{lemma}\label{descents-encoding}
There is a bijection $\enc: \A_n \longrightarrow \WW_n$ such that if $\enc(\pi)=\ww$, then
$$
\ww_i=D \ \Longleftrightarrow \ i\in \D(\pi).
$$
\end{lemma}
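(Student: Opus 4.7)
The plan is to match cardinalities first and then to construct an explicit descent-preserving bijection by tracking the unique step at which the prefix of $\pi$ first becomes a \emph{wrapping} cyclic interval (a proper cyclic interval in $\ZZ_n$ containing both $1$ and $n$).

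Any word $w \in \{A,D\}^{n-1}$ with $k$ adjacent alternations $w_i w_{i+1} \in \{AD, DA\}$ contributes $k+1$ elements to $\WW_n$. Since by symmetry each of the $n-2$ adjacency slots is an alternation in exactly half of the $2^{n-1}$ words, one obtains $|\WW_n| = 2^{n-1} + (n-2) \cdot 2^{n-2} = n \cdot 2^{n-2} = |\A_n|$ by Claim~\ref{arc-enumeration}.

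To build $\enc$, I would think of $\pi \in \A_n$ as produced one letter at a time, with $\pi(i+1)$ extending the previous prefix on its cyclic left or cyclic right. Define the wrap step $j(\pi) \in \{2,\dots,n-1\}$ to be the first step at which the prefix becomes wrapping, or $j(\pi) = \infty$ when no such step exists; this latter case happens exactly when $\pi \in \LL_n$. Then set $\enc(\pi) = \ww$ with $w_i = D \Leftrightarrow i \in \D(\pi)$, and underline the pair $(w_{j-1}, w_j)$ if $j < \infty$, leaving $\ww$ without underline if $j = \infty$. A short case analysis at the wrap step shows the underlined pair is always an alternation: if the wrap step adds the value $n$, then $\pi(j-1)$ lies in $\{1, b_{j-1}\} \subseteq [1, n-1]$, forcing $w_{j-1} = A$, while $\pi(j) = n$ becomes the value-largest element of the wrapping interval, forcing $w_j = D$; the case adding $1$ is symmetric and gives $(D, A)$.

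Bijectivity follows from an explicit inverse: given $\ww \in \WW_n$, the underline (if any) locates $j$, and its orientation ($AD$ vs.\ $DA$) tells us whether $\pi(j) = n$ or $1$. The descent letters determine the direction $s_i \in \{L, R\}$ of each extension via two rules that switch exactly at the underline: in the pre-wrap regime $s_{i+1} = L \Leftrightarrow w_i = D$, while in the post-wrap regime $s_i = L \Leftrightarrow w_i = D$; the direction sequence thus determined, together with the value of $\pi(j)$, recovers $\pi(1)$ and hence $\pi$. If no underline is present, the pre-wrap rule applies throughout and $\pi$ is the unique left-unimodal permutation with descent word $\ww$. The main obstacle will be verifying the index shift between the two direction-descent rules at the wrap and checking all boundary cases ($j = 2$, $j = n-1$, and $j = \infty$); once these are settled, injectivity of $\enc$ is immediate from the reconstruction, and combined with the matching cardinalities the map is a bijection.
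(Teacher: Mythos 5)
Your construction is the same as the paper's: encode the descent set letter by letter and underline the pair at the first position where the prefix becomes a wrapping cyclic interval, checking (as you do, correctly) that this pair must be an alternation, and then invert by reading off the left/right extension directions from the letters, with the rule shifting by one index at the underline. The only cosmetic difference is that you close the argument via injectivity plus the cardinality count $|\WW_n|=n2^{n-2}=|\A_n|$, whereas the paper writes down the two-sided inverse explicitly; both are fine.
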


\begin{proof}
Given $\pi\in\A_n$, first let $w_i=D$ if $i\in\D(\pi)$, and $w_i=A$
otherwise, for each $1\le i\le n-1$. If $\pi\in\LL_n$, then $\enc(\pi)$
is the word $\ww$ with no underlined pair.
Otherwise, let $k$ be the smallest such that
$\{\pi(1),\dots,\pi(k)\}$ is not an interval in $\bbz$, and note
that $k<n$. If $w_{k-1}=D$, then $\pi(k)=1<\pi(k+1)$, so $w_k=A$.
Similarly, if $w_{k}=A$, then $\pi(k)=n>\pi(k+1)$, so $w_k=D$. In
both cases, $\enc(\pi)$ is the word $\ww$ with the pair $w_{k-1}w_{k}$ underlined.

To show that $\enc$ is a bijection between $\A_n$ and
$\WW_n$, we describe the inverse map. Given $\ww\in\WW_n$, let
$w_{k-1} w_k$ be the underlined pair in $\ww$ if there is one, and let $k=n+1$ otherwise.
It is easy to verify that the unique $\pi\in\A_n$ with encoding
$\enc(\pi)=\ww$ can be recovered as follows.

If $w_{k-1} w_k=\ul{DA}$, let $\delta=n+1$, otherwise let $\delta=k$.
Then, for $1\le i< k$,
$$
\pi(i)=\begin{cases} \delta-1-|\{j\in[i,k-2]:\, w_j=A\}|& \text{ if $i=1$ or $w_{i-1}=A$},\\
\delta-k+1+|\{j\in[i,k-2]:\, w_j=D\}|& \text{ if $w_{i-1}=D$}.
\end{cases}
$$
Now let $\delta'=\delta\mod n$. For $k\le i\le n$,
$$
\pi(i)=\begin{cases} \delta'+|\{j\in[k,i-1]:\, w_j=A\}|& \text{ if $i=1$ or $w_{i}=A$},\\
\delta'+n-k-|\{j\in[k,i-1]:\, w_j=D\}|& \text{ if $w_{i}=D$}.
\end{cases}
$$
\end{proof}

For example, $\enc(342561)=ADAAD$, $\enc(12543)=A\ul{AD}D$, and $\enc(65781423)=DAA\ul{DA}DA$.
 The following is
an immediate consequence of our encoding of arc permutations.

\begin{proposition}
Let $B\subseteq [n-1]$. Then
$$|\{\pi\in\A_n : \D(\pi)=B\}|=1+|\{i\in[n-2]:\,|B\cap\{i,i+1\}|=1\}|.$$
\end{proposition}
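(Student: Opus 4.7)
The proof plan is a direct application of the bijection $\enc$ from Lemma~\ref{descents-encoding}. Since $\enc:\A_n\to\WW_n$ sends a permutation $\pi$ to a word $\ww$ with $w_i=D$ if and only if $i\in\D(\pi)$, the fiber $\{\pi\in\A_n:\D(\pi)=B\}$ is in bijection with the set of elements of $\WW_n$ whose underlying letter sequence is the word $\ww(B)$ defined by $w_i=D$ for $i\in B$ and $w_i=A$ for $i\in[n-1]\setminus B$.

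The first step is to observe that the underlying word $\ww(B)$ is completely determined by $B$, so the count reduces to enumerating the legal decorations of this fixed word. By the definition of $\WW_n$, a decoration is either (a) no underline, or (b) an underline of a single adjacent pair $w_i w_{i+1}$ with $i\in[n-2]$, subject to the requirement that $w_iw_{i+1}\in\{AD,DA\}$.

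The second step is to translate the decoration condition back to $B$: the pair $w_iw_{i+1}$ lies in $\{AD,DA\}$ precisely when exactly one of $i,i+1$ belongs to $B$, i.e.\ when $|B\cap\{i,i+1\}|=1$. Summing the undecorated option with the number of admissible underline positions yields
\[
1+|\{i\in[n-2]:|B\cap\{i,i+1\}|=1\}|,
\]
which is the claimed formula.

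There is no real obstacle here: once Lemma~\ref{descents-encoding} is in hand, the proposition is essentially a reformulation of the data encoded by $\WW_n$. The only thing to double-check is that every such decoration indeed corresponds to an arc permutation (not, say, one whose encoding would have the underlined pair forced to coincide with the boundary behavior described in the lemma's proof), but this is already guaranteed by the surjectivity of $\enc$ onto $\WW_n$ established in Lemma~\ref{descents-encoding}.
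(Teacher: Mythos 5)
Your proof is correct and follows the same route as the paper: both count, for the fixed underlying word determined by $B$, the single undecorated encoding plus one underlined encoding for each adjacent pair equal to $AD$ or $DA$, then invoke the bijection $\enc$ of Lemma~\ref{descents-encoding}. Your extra remark that surjectivity of $\enc$ guarantees every decoration is realized is a sensible sanity check, but the argument is otherwise identical.
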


\begin{proof}
Permutations in $\A_n$ with descent set $B$ correspond via $\enc$ to
encodings $\ww\in\WW_n$ such that $w_i=D$ if and only if $i\in B$.
There is one such encoding with no underlined pairs, and one
encoding where the pair $w_iw_{i+1}$ is underlined for each $i$
for which this pair equals $AD$ or $DA$.
\end{proof}

\subsection{$\mu$-left-unimodal permutations}\label{sec:muleft}

Here we define a generalization of left-unimodal permutations.
Let $\mu=(\mu_1,\dots,\mu_t)$ be a partition of $n$ with $t$
nonzero parts. For $1\le i\le t$, denote
$$
\mu_{(i)}:=\sum\limits_{j=1}^i \mu_j
$$
and
$$
S(\mu):=(\mu_{(1)},\dots,\mu_{(t)}).
$$
Let $\mu_{(0)}:=0$. A permutation $\pi \in \S_n$ is {\em
$\mu$-left-unimodal} if for every $0\le i< t$ there exists $0\le
j_i \le \mu_{i+1}$ such that
$$
\pi^{-1}(\mu_{(i)}+1)>\pi^{-1}(\mu_{(i)}+2)>\cdots >
\pi^{-1}(\mu_{(i)}+j_i)< \pi^{-1}(\mu_{(i)}+j_i+1)<\cdots<
\pi^{-1}(\mu_{(i+1)}).
$$
Denote the set of $\mu$-left-unimodal permutations by $\LL_\mu$.
Note that for the partition with one part $\mu=(n)$, we have
$\LL_{(n)}=\LL_n$, the set of left-unimodal permutations in
$\S_n$.

We will consider the set $\LL_\mu^{-1}$, consisting of those
permutations whose inverse is $\mu$-left-unimodal. Translating the
above definition, we see that $\pi\in\LL_\mu^{-1}$ if for every $0\le i< t$,
the sequence
$\pi(\mu_{(i)}+1),\pi(\mu_{(i)}+2),\dots,\pi(\mu_{(i+1)})$ first
decreases and then increases; we say that this sequence is {\em $V$-shaped}.

For example, if $\mu=(\mu_1,\mu_2,\mu_3)=(4,3,1)$, then
$S(\mu)=(\mu_{(1)},\mu_{(2)},\mu_{(3)})=(4,7,8)$. In this case
$53687142,35687412\in\LL_\mu^{-1}$ because the sequences $5368$,
$714$, $8$, $3568$, $741$, $2$  are $V$-shaped. However,
$53867142,53681742\notin\LL_\mu^{-1}$ because the sequences $5286$
and $174$ are not $V$-shaped.

\subsection{Enumeration of arc permutations whose
inverse is $\mu$-left-unimodal}\label{sec:regev-unsugned}

\begin{proposition}\label{prop:enumAL}
For every partition $\mu=(\mu_1,\ldots,\mu_r, 1^s)$ of $n$ with
$\mu_r>1$,
$$
|\A_n \cap
\LL_\mu^{-1}|=\mu_1\dots\mu_r\,2^{r+s}\left(r+\frac{s}{4}-\sum_{i=1}^r
\frac{1}{\mu_i}\right).
$$
\end{proposition}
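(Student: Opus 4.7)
The plan is to count $|\A_n \cap \LL_\mu^{-1}|$ via the descent-encoding $\enc : \A_n \to \WW_n$ of Lemma~\ref{descents-encoding}, by first translating $\mu$-left-unimodality into a condition on the descent set.

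First, I translate the V-shape condition into descent sets. By the discussion in Section~\ref{sec:muleft}, $\pi \in \LL_\mu^{-1}$ iff each sequence $\pi(\mu_{(i)}+1), \pi(\mu_{(i)}+2), \dots, \pi(\mu_{(i+1)})$ is V-shaped. Since $j \in \D(\pi)$ iff $\pi(j)>\pi(j+1)$, this is equivalent to: within each \emph{internal block} $\mathcal{I}_k := \{\mu_{(k-1)}+1, \dots, \mu_{(k)}-1\}$, the descents of $\pi$ form an initial segment. Hence, in the word $w_1\cdots w_{n-1}$ encoding $\D(\pi)$, the letters restricted to $\mathcal{I}_k$ read $D^{d_k}A^{\mu_k-1-d_k}$ for some $0\le d_k\le \mu_k-1$, while the $r+s-1$ boundary letters $w_{\mu_{(1)}}, \dots, w_{\mu_{(t-1)}}$ are unrestricted. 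Let $\mathcal{P}_\mu$ be the set of such descent sets $B\subseteq[n-1]$; counting block-by-block and boundary-by-boundary gives
$$|\mathcal{P}_\mu| \;=\; \mu_1\cdots\mu_r\cdot 2^{r+s-1}.$$

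Second, I invoke the formula $|\{\pi\in\A_n:\D(\pi)=B\}| = 1 + T(B)$ from Section~\ref{sec:encoding-by-descents}, where $T(B):=|\{i\in[n-2]:|B\cap\{i,i+1\}|=1\}|$. Summing gives $|\A_n\cap\LL_\mu^{-1}| = |\mathcal{P}_\mu| + \sum_{B\in\mathcal{P}_\mu}T(B)$. To evaluate the remaining sum I exchange summation and classify each pair $(i,i+1)$ with $i\in[n-2]$ as either \emph{internal} (both endpoints inside a common $\mathcal{I}_k$) or \emph{boundary-incident} (at least one endpoint is a boundary position $\mu_{(k)}$). For an internal pair in block $k$, exactly one of the $\mu_k$ values of $d_k$ places the $D\to A$ transition there, so its contribution is $|\mathcal{P}_\mu|/\mu_k$. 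For a boundary-incident pair the relevant letters $w_i, w_{i+1}$ vary independently over their available choices, so exactly half of $\mathcal{P}_\mu$ produces $w_i\ne w_{i+1}$, contributing $|\mathcal{P}_\mu|/2$.

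A direct count shows the number of boundary-incident pairs is $M = 2r+s-2$ uniformly for $s \ge 0$, giving
$$\sum_{B\in\mathcal{P}_\mu}T(B) \;=\; |\mathcal{P}_\mu|\left[\sum_{k=1}^r\frac{\mu_k-2}{\mu_k} + \frac{M}{2}\right] \;=\; |\mathcal{P}_\mu|\left[2r+\frac{s}{2}-1 - 2\sum_{k=1}^r\frac{1}{\mu_k}\right].$$
Adding $|\mathcal{P}_\mu|$, factoring out $2$, and substituting the formula for $|\mathcal{P}_\mu|$ yields the claimed identity.

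The main obstacle is the tally of boundary-incident pairs, which decomposes into subtypes depending on whether the boundary is immediately after a non-singleton block, immediately before one, or between two singleton parts of $\mu$. Each subtype must be counted separately, and one must verify that, despite their case-by-case dependence on $s$ (in particular, there is no boundary after the $t$th block, and the number of pairs of consecutive boundaries equals the number of non-final singleton parts), the totals combine uniformly to $M = 2r+s-2$---which is exactly what makes the parenthetical $r + s/4 - \sum 1/\mu_i$ emerge cleanly.
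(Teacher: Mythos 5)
Your proof is correct, and it is essentially the paper's argument in different packaging: the paper enumerates the encoding words $\ww\in\WW_n$ directly via four cases on the location of the underlined pair, whereas you sum the multiplicity $1+|\{i:|B\cap\{i,i+1\}|=1\}|$ over the $\mu_1\cdots\mu_r\,2^{r+s-1}$ admissible descent sets $B$. The ``$+1$'' corresponds to the paper's no-underline case, and your internal versus boundary-incident pairs (with counts $\sum_k(\mu_k-2)$ and $2r+s-2$) reproduce exactly the paper's cases (ii)--(iv), so the bookkeeping, including the uniform treatment of $s=0$, checks out.
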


Note that when $\mu=(1^n)$, Proposition~\ref{prop:enumAL} gives Claim~3. %\ref{arc-enumeration}.
Indeed, in this case $\LL_{(1^n)}^{-1}=\S_n$, $r=0$ and $s=n$, so
$$
|\A_n|=|\A_n \cap \LL_{(1^n)}^{-1}|=2^n\left(\frac{n}{4}\right) =
n2^{n-2}.
$$

\begin{proof}
We use the encoding $\enc$ of permutations in $\A_n$ by words in $\WW_n$,
defined in Lemma~\ref{descents-encoding}.
Recall that $\pi\in\A_n \cap \LL_\mu^{-1}$ if
$\pi(\mu_{(i-1)}+1),\pi(\mu_{(i-1)}+2),\dots,\pi(\mu_{(i)})$ is
$V$-shaped for all $1\le i\le r+s$. By Lemma~\ref{descents-encoding},
this property is equivalent to the fact that for all
$1\le i\le r$, the subword
$\ww^{(i)}:=w_{\mu_{(i-1)}+1}w_{\mu_{(i-1)}+2}\dots w_{\mu_{(i)}-1}$
has no $A$ followed by a $D$.

To find $|\A_n \cap \LL_\mu^{-1}|$ we will enumerate the words $\ww\in\WW_n$
where each block $\ww^{(i)}$ satisfies this condition. Note
that the length of $\ww^{(i)}$ is $\mu_i-1$, and that
$\ww$ has one letter between each block and the next, and $s$
letters to the right of the rightmost block $\ww^{(r)}$. Consider
four cases as follows.
\ben\renewcommand{\labelenumi}{(\roman{enumi})} \item If $\ww$ has
no underlined pair, then $\ww^{{(i)}}=D^kA^{\mu_i-k-1}$ for some
$0\le k\le \mu_i-1$, so there are $\mu_i$ choices for each
$\ww^{{(i)}}$, times $2^{r+s-1}$ choices for the remaining
letters, for a total of $\mu_1\mu_2\dots\mu_r2^{r+s-1}$ words of
this form. \item If $\ww$ has an underlined pair contained inside
a block, say $\ww^{{(i)}}$, then
$\ww^{{(i)}}=D^k\ul{DA}A^{\mu_i-k-3}$ for some $0\le k\le
\mu_i-3$, so there are $\mu_i-2$ choices for this block. The
remaining choices are as in case (i), so the number of words of
this form is
$$\sum_{i=1}^r \mu_1\dots\mu_{i-1}(\mu_i-2)\mu_{i+1}\dots\mu_r2^{r+s-1}
=\mu_1\mu_2\dots\mu_r2^{r+s-1}\left(r-2\sum_{i=1}^r\frac{1}{\mu_i}\right).$$
\item If $\ww$ has an underlined pair outside of the blocks
$\ww^{{(i)}}$, then there are $s-1$ choices for the location of
the pair (assuming $s\ge1$), $2$ choices for whether it is
$\ul{AD}$ or $\ul{DA}$, and $2^{r+s-3}$ choices for the remaining
letters outside the blocks, giving a total of
$\mu_1\mu_2\dots\mu_r(s-1)2^{r+s-2}$ words of this form. When
$s=0$ there are no words of this form. \item If $\ww$ has an
underlined pair that is partly inside a block $\ww^{{(i)}}$ and
partly outside, the number of choices for the location of the
underlined pair is $2r-1$ if $s\ge1$, since it can be at the
beginning or at the end of any block but not at the beginning of
$\ww^{(1)}$, and $2r-2$ if $s=0$, since it cannot be at the end of
block $\ww^{(r)}$ either. Afterwards, the letters inside the
blocks can be chosen in $\mu_1\mu_2\dots\mu_r$ ways as before,
with the understanding that the choice of the underlined letter
forces the other underlined letter. Now we have $2^{r+s-2}$
choices for the not underlined letters outside the blocks, for a
total of $\mu_1\mu_2\dots\mu_r(2r-1)2^{r+s-2}$ words if $s\ge1$,
or $\mu_1\mu_2\dots\mu_r(2r-2)2^{r+s-2}$ if $s=0$. \een Adding the
four contributions we obtain the stated formula.
\end{proof}

Next we give a signed version of Proposition~\ref{prop:enumAL}, that is, a formula for signed enumeration of arc permutations whose
inverse is $\mu$-left-unimodal. This formula will be
used in the proof of Regev's character formula in Section~\ref{sec:regev}.

\begin{proposition}\label{arc-mu-unimodal}
For every partition $\mu=(\mu_1,\ldots,\mu_r, 1^s)$ of $n$ with
$\mu_r>1$, \beq\label{eq:altsum} \sum\limits_{\pi\in \A_n \cap
\LL_\mu^{-1}} (-1)^{|\D(\pi)\setminus S(\mu)|}= \frac{1}{4}\cdot s
\cdot \prod\limits_{j=1}^{r+s} (1+(-1)^{\mu_j-1}). \eeq
\end{proposition}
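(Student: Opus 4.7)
The plan is to compute the sum via the descent-encoding $\enc \colon \A_n \to \WW_n$ from Lemma~\ref{descents-encoding}, following the same case decomposition used in the proof of Proposition~\ref{prop:enumAL}. Under this encoding, $i \in \D(\pi)$ iff $w_i = D$, and the condition $\pi \in \LL_\mu^{-1}$ translates, as in that proof, to the requirement that each block $\ww^{(i)} = w_{\mu_{(i-1)}+1}\cdots w_{\mu_{(i)}-1}$ (for $1 \le i \le r$) contain no $A$ immediately followed by $D$.

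The first key observation is that $S(\mu) \cap [n-1] = \{\mu_{(1)}, \ldots, \mu_{(r+s-1)}\}$ is exactly the set of separator positions between consecutive blocks of $\ww$. Consequently $(-1)^{|\D(\pi) \setminus S(\mu)|}$ equals $(-1)^d$, where $d$ counts the $D$'s appearing inside the proper blocks $\ww^{(1)}, \ldots, \ww^{(r)}$. So, in the signed sum, each separator position contributes a free factor of $2$ (two choices, no sign), while an un-underlined proper block $\ww^{(i)}$ ranges over words $D^k A^{\mu_i-1-k}$ with $0 \le k \le \mu_i-1$ and has signed sum $\sum_{k=0}^{\mu_i-1}(-1)^k = \frac{1+(-1)^{\mu_i-1}}{2}$. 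Writing $P := \prod_{i=1}^r \frac{1+(-1)^{\mu_i-1}}{2}$, this yields a contribution of $2^{r+s-1}P$ from case~(i) of the cited proof.

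Next I would run through cases (ii)--(iv). In case~(ii) an underlined $\ul{DA}$ sits inside some block $\ww^{(i_0)}$, adding one forced $D$ and flipping the block's signed sum, for a total contribution of $-r\cdot 2^{r+s-1}P$. In case~(iii) the underline sits on two adjacent separator positions in the singleton region, both in $S(\mu)$, so it contributes $(s-1)\cdot 2^{r+s-2}P$ (and $0$ if $s=0$). Case~(iv) is the only delicate one: the underline straddles a block boundary, and I must compute the signed block sum when $\ul{AD}$ or $\ul{DA}$ occupies the leftmost or rightmost position of $\ww^{(i_0)}$. A short direct check shows that in each of the four sub-cases the signed sum of the affected block is $\frac{1+(-1)^{\mu_{i_0}-1}}{2}$, matching case~(i); combined with the $2r-1$ placements (or $2r-2$ if $s=0$) and the $2^{r+s-2}$ free separators, this gives $(2r-1)\cdot 2^{r+s-2}P$.

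Assembling the four contributions, the total becomes $2^{r+s-2}P\bigl[2(1-r)+(s-1)+(2r-1)\bigr] = s \cdot 2^{r+s-2}P$. Since $\mu_j=1$ for $j>r$ contributes a factor $2$ each, we have $2^s\prod_{i=1}^r(1+(-1)^{\mu_i-1}) = \prod_{j=1}^{r+s}(1+(-1)^{\mu_j-1})$, so the total equals $\frac{s}{4}\prod_{j=1}^{r+s}(1+(-1)^{\mu_j-1})$, as required. The $s=0$ case is checked separately, where the sum $2(1-r)+(2r-2)=0$ yields $0$, matching the formula. The principal obstacle is the verification in case~(iv): although the $\ul{AD}$ and $\ul{DA}$ sub-cases individually give asymmetric signed sums, they reassemble into the same expression $\frac{1+(-1)^{\mu_{i_0}-1}}{2}$ as case~(i), which is what drives the final telescoping.
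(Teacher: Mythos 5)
Your proposal is correct, but it reaches the formula by a genuinely different mechanism than the paper. The paper also works through the encoding of Lemma~\ref{descents-encoding} and the same four-case decomposition from Proposition~\ref{prop:enumAL}, but it constructs an explicit sign-reversing involution $\invol$ on $\enc(\A_n\cap\LL_\mu^{-1})$: when $\mu$ has an even part the involution has no fixed points and the sum vanishes; when all parts are odd, the fixed points are enumerated in the same four cases and their signs add to $2^{r+s-1}-r\,2^{r+s-1}+(s-1)2^{r+s-2}+(2r-1)2^{r+s-2}=s\,2^{r+s-2}$. You instead sum the signs directly, using the two observations that do all the work: the separator positions are exactly $S(\mu)\cap[n-1]$, so the sign factors as a product over the blocks $\ww^{(1)},\dots,\ww^{(r)}$, and each block's signed sum is $\sum_{k=0}^{\mu_i-1}(-1)^k=\tfrac{1}{2}(1+(-1)^{\mu_i-1})$. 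This buys a uniform treatment of both parities (the block factor vanishes automatically when $\mu_i$ is even, so no separate case is needed) and replaces the verification that $\invol$ is a well-defined sign-reversing involution across its many sub-cases with a handful of finite geometric sums; the paper's involution, in exchange, is more explicitly bijective and pinpoints which words survive cancellation. One wording slip worth fixing: in case~(iv) it is not true that \emph{each} of the four sub-cases gives block sum $\tfrac{1}{2}(1+(-1)^{\mu_{i_0}-1})$; rather, at a fixed straddle position the $\ul{AD}$ and $\ul{DA}$ contents of the affected block (namely $\sum_{k\ge1}(-1)^k$ plus the single $k=0$ term at a block start, or $(-1)^{\mu_{i_0}-1}$ plus $\sum_{k\le\mu_{i_0}-2}(-1)^k$ at a block end) jointly reassemble to that value, with the adjacent separator letter forced rather than free --- which is exactly what your closing sentence says, so only the earlier phrasing needs adjusting. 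Your argument assumes $r\ge1$, which is what the hypothesis $\mu_r>1$ gives; the degenerate $\mu=(1^n)$ case, which the paper dispatches as a triviality, would need the separate one-line check $|\A_n|=n2^{n-2}$ if you wanted to cover it.
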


Before proving this proposition, we give an example for $n=4$ and $\mu=(3,1)$. In this case,
$$\A_n \cap \LL_\mu^{-1}=\{1234,1243,2134,2143,2341,3214,3241,4123,4132,4312,4321\}.$$
Since $|\D(\pi)\setminus S(\mu)|=|\D(\pi)\setminus\{3\}|$, the
left hand side of equation~\eqref{eq:altsum} becomes
$$(-1)^0+(-1)^0+(-1)^1+(-1)^1+(-1)^0+(-1)^2+(-1)^1+(-1)^1+(-1)^1+(-1)^2+(-1)^2=1.$$

\begin{proof}[Proof of Proposition~\ref{arc-mu-unimodal}]
We use the encoding $\enc$ from Lemma~\ref{descents-encoding}.
Defining the sign of a permutation $\pi$ to be
$(-1)^{|\D(\pi)\setminus S(\mu)|}$, we will construct a
sign-reversing involution on the set $\A_n \cap \LL_\mu^{-1}$. If
$\mu$ has some part of even size, then this involution will have
no fixed points, so all the terms in the left hand side
of~\eqref{eq:altsum} cancel with each other. If all the parts of
$\mu$ have odd size, then some permutations will not be canceled by
the involution, but their contribution to the sum is easy to
compute.

Recall that $\pi\in\A_n \cap \LL_\mu^{-1}$ if and only if
$\ww^{(i)}=w_{\mu_{(i-1)}+1}w_{\mu_{(i-1)}+2}\dots w_{\mu_{(i)}-1}$
has no $A$ followed by a $D$ for all $1\le i\le r$.

Suppose first that $\mu$ has some part of even size, and let
$\mu_e$ be the first such part. We define an involution $\invol$
on the set of encodings $\ww$ of permutations in $\A_n \cap \LL_\mu^{-1}$
by changing only the subword $\ww^{(e)}$, and possibly the letters
immediately preceding and following $\ww^{(e)}$, but leaving the
rest of the word $\ww$ unchanged. Note that $\ww^{(e)}$ has
odd length, which we write as $\mu_e-1=2a+1$. If the underlined
pair is completely inside or outside of $\ww^{(e)}$ (or there is
no underlined pair), define $\invol$ on $\ww^{(e)}$ as follows:
\begin{align*}
& D^{2i+1}A^{2(a-i)}\overset{\invol}{\longleftrightarrow} D^{2i}A^{2(a-i)+1} && \mbox{for } 0\le i\le a,\\
& D^{2i+1}\ul{DA}A^{2(a-i-1)}\overset{\invol}{\longleftrightarrow}
D^{2i}\ul{DA}A^{2(a-i-1)+1} && \mbox{for } 0\le i\le a-1.
\end{align*}
If the pair  $w_{\mu_{(e)}-1}w_{\mu_{(e)}}$ is underlined, define
$\invol$ on $\ww^{(e)}w_{\mu_{(e)}}$ as follows:
\begin{align*}
& D^{2a}\ul{DA}\overset{\invol}{\longleftrightarrow} D^{2a}\ul{AD}, &&\\
& D^{2i+1}A^{2(a-i)-1}\ul{AD}\overset{\invol}{\longleftrightarrow}
D^{2i}A^{2(a-i)}\ul{AD} && \mbox{for } 0\le i\le a-1.
\end{align*}
Finally, if the pair $w_{\mu_{(e-1)}}w_{\mu_{(e-1)}+1}$ is
underlined, define $\invol$ on $w_{\mu_{(e-1)}}\ww^{(e)}$ as
follows:
\begin{align*}
& \ul{AD}A^{2a}\overset{\invol}{\longleftrightarrow} \ul{DA}A^{2a},&&\\
& \ul{AD}D^{2i}A^{2(a-i)}\overset{\invol}{\longleftrightarrow}
\ul{AD}D^{2i-1}A^{2(a-i)+1} && \mbox{for } 1\le i\le a.
\end{align*}

Since the above definitions cover all the possibilities, the map
$\invol$ is an involution on the set $\enc(\A_n \cap \LL_\mu^{-1})$. It is also clear that the sign of the
permutation encoded by a word changes when applying $\invol$ to
it, since the parity of the number of $D$s in $\ww^{(e)}$ changes
(note that the letters $w_{\mu_{(e-1)}}$ and $w_{\mu_{(e)}}$ do
not contribute to the sign). It follows that $$\sum\limits_{\pi\in
\A_n \cap \LL_\mu^{-1}} (-1)^{|\D(\pi)\setminus S(\mu)|}=0$$ when
$\mu$ has some part of even size, which agrees with the right hand
side of equation~\eqref{eq:altsum} in this case.

Consider now the case where all the parts of $\mu$ have odd size.
The case $r=0$ is trivial because then $\A_n \cap
\LL_\mu^{-1}=\A_n$ and $|\D(\pi)\setminus S(\mu)|=0$, so
equation~\eqref{eq:altsum} becomes $|\A_n|=n2^{n-2}$, which is
true by Claim~3. %\ref{arc-enumeration}.
Suppose in what follows that
$r\ge1$. For each fixed $1\le i\le r$, we first define an operation
$\invol_i$ on $\ww^{{(i)}}$ and its surrounding letters that can
only be applied if this subword has a certain form.
Write the length of $\ww^{{(i)}}$, which is even, as $\mu_i-1=2b$.
If the underlined pair is completely inside or
outside of $\ww^{{(i)}}$, or there is no underlined pair, define
$\invol_i$ on $\ww^{{(i)}}$ as follows:
\begin{align*}
& D^{2i+1}A^{2(b-i)-1}\overset{\invol_i}{\longleftrightarrow} D^{2i}A^{2(b-i)} && \mbox{for } 0\le i\le b-1,\\
&
D^{2i+1}\ul{DA}A^{2(b-i-1)-1}\overset{\invol_i}{\longleftrightarrow}
D^{2i}\ul{DA}A^{2(b-i-1)} && \mbox{for } 0\le i\le b-2.
\end{align*}
If $w_{\mu_{(i)}-1}w_{\mu_{(i)}}=\ul{AD}$, define $\invol_i$ on
$\ww^{{(i)}}w_{\mu_{(i)}}$ by
\begin{align*}
&
D^{2i+1}A^{2(b-i-1)}\ul{AD}\overset{\invol_i}{\longleftrightarrow}
D^{2i}A^{2(b-i-1)+1}\ul{AD} && \mbox{for } 0\le i\le b-1.
\end{align*}
If $w_{\mu_{(i-1)}}w_{\mu_{(i-1)}+1}=\ul{AD}$, define $\invol_i$
on $w_{\mu_{(i-1)}}\ww^{{(i)}}$ by
\begin{align*}
&
\ul{AD}D^{2i}A^{2(b-i-1)+1}\overset{\invol_i}{\longleftrightarrow}
\ul{AD}D^{2i+1}A^{2(b-i-1)} && \mbox{for } 0\le i\le b-1.
\end{align*}
The operation $\invol_i$ is defined for all words $\ww\in\WW_n$
except when $\ww^{{(i)}}=D^{2b}$, $\ww^{{(i)}}=D^{2b-2}\ul{DA}$,
$\ww^{{(i)}}w_{\mu_{(i)}}= D^{2b-1}\ul{DA}$, or
$w_{\mu_{(i-1)}}\ww^{{(i)}}=\ul{DA}A^{2b-1}$.

Now we are ready to define the involution $\invol$ when all the parts of $\mu$ are odd. To compute
$\invol(\ww)$ for a given $\ww\in\enc(\A_n \cap \LL_\mu^{-1})$, if there is some $i$ with
$1\le i\le r$ for which the operation $\invol_i$ is defined, take
the smallest such $i$ and apply $\invol_i$ to $\ww^{{(i)}}$ and
its surrounding letters, leaving the rest of $\ww$ unchanged. If
there is no such $i$, then $\invol$ does not change $\ww$, that
is, $\ww$ is a fixed point of $\invol$. It is clear from the
construction that $\invol$ is a sign-reversing involution, since
$\invol_i$ changes the parity of the number of $D$s in
$\ww^{{(i)}}$, while the other $\ww^{{(j)}}$ for $j\ne i$ are
unchanged. However, unlike in the case of parts of even size, here
$\invol$ has some fixed points: those words $\ww\in\WW_n$ for
which the operation $\invol_i$ is not defined for any $1\le i\le
r$. For this to happen, $\ww^{{(i)}}$ has to equal either
$D^{\mu_i-1}$, $D^{\mu_i-3}\ul{DA}$, $D^{\mu_i-2}\ul{D}$, or
$\ul{A}A^{\mu_i-2}$, for all $1\le i\le r$.

Let us enumerate the words $\ww\in\WW_n$ having this property,
separating them in four cases. Note that $\ww$ has one letter
between $\ww^{(i)}$ and $\ww^{(i+1)}$ for each $1\le i\le r-1$,
and $s$ letters to the right of $\ww^{(r)}$. Suppose first that
$s\ge1$. \ben\renewcommand{\labelenumi}{(\roman{enumi})} \item If
$\ww$ has no underlined pair, then $\ww^{{(i)}}=D^{\mu_i-1}$ for
all $i$, and there are $2^{r+s-1}$ choices for the remaining
letters of $\ww$. The corresponding permutations have positive
sign, since every block $\ww^{{(i)}}$ has an even number of $D$s.
\item If $\ww$ has an underlined pair contained inside a block,
the above number of choices has to be multiplied by the $r$
choices of the block for which $\ww^{{(i)}}=D^{\mu_i-3}\ul{DA}$.
The $r\,2^{r+s-1}$ corresponding permutations have now negative
sign, since exactly one block $\ww^{{(i)}}$ has an odd number of
$D$s. \item If $\ww$ has an underlined pair outside of all the
blocks $\ww^{{(i)}}$, then again $\ww^{{(i)}}=D^{\mu_i-1}$ for all
$i$. Since the underlined pair has to be contained in the last $s$
letters of $\ww$, we have $s-1$ choices for its location, times
$2$ choices for whether it is $\ul{AD}$ or $\ul{DA}$, and finally
$2^{r+s-3}$ choices for the remaining letters in $\ww$. The
$(s-1)2^{r+s-2}$ corresponding permutations have positive sign.
\item If $\ww$ has an underlined pair that is partly inside a
block $\ww^{{(i)}}$ and partly outside, we have to choose the
index $i$ such that $\ww^{{(i)}}$ equals $D^{\mu_i-2}\ul{D}$ or
$\ul{A}A^{\mu_i-2}$. For each possible $i$ we have two choices,
except for $i=1$, in which case $\ww^{{(1)}}=D^{\mu_1-2}\ul{D}$ is
the only possibility because there is no letter to the left of
$\ww^{{(1)}}$. This gives $2r-1$ choices for the underlined pair,
which also forces the entries in all the blocks $\ww^{{(j)}}$,
leaving $2^{r+s-2}$ choices for the remaining entries of $\ww$.
The $(2r-1)2^{r+s-2}$ corresponding permutations have again
positive sign. \een Adding the contributions in the four cases,
the sum of the signs of the permutations fixed by the involution
is \beq\label{eq:rs}
2^{r+s-1}-r\,2^{r+s-1}+(s-1)2^{r+s-2}+(2r-1)2^{r+s-2}=s\,2^{r+s-2},\eeq
which agrees with the right hand side of~\eqref{eq:altsum} when
all the parts of $\mu$ have odd size. Formula~\eqref{eq:rs} also
holds when $s=0$. The only change in the argument is that there
are no permutations in case (iii), but in case (iv), when choosing
the index $i$ such that $\ww^{{(i)}}$ equals $D^{\mu_i-2}\ul{D}$
or $\ul{A}A^{\mu_i-2}$, the choice $i=r$ forces
$\ww^{{(r)}}=\ul{A}A^{\mu_r-2}$, giving $2r-2$ choices for the
underlined pair, times $2^{r-2}$ choices for the remaining entries
of $\ww$.
\end{proof}

As an example of the involution in the above proof, let $n=10$ and
$\mu=(5,3,1,1)$. The words fixed by $\invol$ of each of the above
types are those of the following form:
\ben\renewcommand{\labelenumi}{(\roman{enumi})} \item $DDDD\,
w_5\, DD\,w_8\,w_9$, \item $DD\ul{DA}\, w_5\, DD\,w_8\,w_9$,
$DDDD\, w_5\, \ul{DA}\,w_8\,w_9$, \item $DDDD\, w_5\,
DD\,\ul{AD}$, $DDDD\, w_5\, DD\,\ul{DA}$, \item $DDD\ul{DA}\,
DD\,w_6\,w_7$, $DDDD\, \ul{DA}A\,w_6\,w_7$, $DDDD\, w_5\,
D\ul{DA}\, w_7$. \een The sum of the signs of the corresponding permutations is
$8-16+4+12=8=s\,2^{r+s-2}$. The contributions of all the other
words are canceled by the sign-reversing involution. For example,
$$\invol(DDAA\, w_5w_7\,w_8\,w_9)=DDDA\, w_5w_7\,w_8\,w_9 \quad \mbox{and} \quad \invol(DDDD\, w_5\, D\ul{AD}A)=DDDD\, w_5\, A\ul{AD}A.$$

\section{A character formula of Regev}\label{sec:regev}

In this section we apply Theorem~\ref{thm:TZ} and Proposition~\ref{arc-mu-unimodal}
to prove a conjectured character formula of Amitai Regev.

Let $V$ be an $(n-1)$-dimensional vector space over $\CC$, and let $\wedge V$ be
its exterior algebra. Consider the natural action of the symmetric
group $\S_{n-1}$ on $\wedge V$, and denote the character of the
induced $\S_{n}$-module $\wedge V\uparrow^{\S_n}$ by $\chi_n$.
Regev conjectured the following character formula~\cite{Regev}.

\begin{theorem}[{\bf Regev's Formula}]\label{thm:regev}
Let $\mu=(\mu_1,\ldots,\mu_r, 1^s)$ be a partition of $n$ with $\mu_r>1$. Then
$$\chi_n(\mu)=\frac{1}{4}\cdot s \cdot \prod\limits_{j=1}^{r+s} (1+(-1)^{\mu_j-1}).$$
\end{theorem}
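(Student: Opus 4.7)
The plan is to show
$$\chi_n(\mu) = \sum_{\pi\in \A_n\cap \LL_\mu^{-1}}(-1)^{|\D(\pi)\setminus S(\mu)|},$$
which by Proposition~\ref{arc-mu-unimodal} equals the right-hand side of the theorem.

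First, I would decompose $\chi_n$ into irreducibles. Since $\wedge V\cong\bigoplus_{k=0}^{n-2}S^{(n-1-k,1^k)}$ as an $\S_{n-1}$-module (the exterior powers of the reflection representation are the hook irreps), applying Pieri's rule to each summand and collecting terms yields
$$\chi_n = \chi^{(n)}+\chi^{(1^n)}+2\sum_{k=1}^{n-2}\chi^{(n-k,1^k)}+\sum_{m=2}^{n-2}\chi^{(m,2,1^{n-m-2})}.$$
Denote the coefficient of $\chi^\lambda$ above by $c_\lambda$. I would then verify the descent-set equidistribution
$$\sum_\lambda c_\lambda\sum_{T\in \SYT(\lambda)}\x^{\D(T)} = \sum_{\pi\in\A_n}\x^{\D(\pi)}$$
by combining the hook identity $\sum_{T\in\Hk_n}\x^{\D(T)}=(1+x_1)\cdots(1+x_{n-1})$, the unimodal formula $\sum_{\pi\in\U_n}\x^{\D(\pi)}=2(1+x_1)\cdots(1+x_{n-1})-1-x_1\cdots x_{n-1}$ from Section~\ref{equid}, Theorem~\ref{thm:TZ} (which replaces $\sum_{T\in\T_n}\x^{\D(T)}$ with $\sum_{\pi\in\Z_n}\x^{\D(\pi)}$), and the disjoint union $\A_n=\U_n\sqcup\Z_n$.

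Next, I would invoke Roichman's character formula: for every $\lambda\vdash n$,
$$\chi^\lambda(\mu) = \sum_{T\in \SYT(\lambda)}\omega_T(\mu),$$
where $\omega_T(\mu)$ is a function of $\D(T)$ and $S(\mu)$, equal to $(-1)^{|\D(T)\setminus S(\mu)|}$ when no index $k$ satisfies $k,k+1\notin S(\mu)$, $k\notin\D(T)$, and $k+1\in\D(T)$, and equal to $0$ otherwise. Because the weight depends only on descent-set data, combining with the equidistribution above lifts the identity to $\chi_n(\mu)=\sum_{\pi\in\A_n}\omega_\pi(\mu)$. By the characterization in Section~\ref{sec:muleft}, the condition $\omega_\pi(\mu)\ne 0$ for $\pi\in\A_n$ is precisely $\pi\in\LL_\mu^{-1}$, so the right side collapses to the signed sum of Proposition~\ref{arc-mu-unimodal}, finishing the proof.

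The main obstacle is invoking Roichman's rule in precisely this form, with per-tableau weight $(-1)^{|\D(T)\setminus S(\mu)|}$ gated by the $\mu$-compatibility indicator that matches the definition of $\LL_\mu^{-1}$. Once that formulation is in hand, the remaining steps merely assemble descent identities already established in the paper.
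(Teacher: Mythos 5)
Your proposal is correct and follows essentially the same route as the paper: the same branching decomposition of $\chi_n$ into hooks and hooks-plus-a-box, Roichman's rule with the $\mu$-unimodality condition correctly identified as the support of the weight, the descent-set equidistribution of Theorem~\ref{thm:TZ}, and finally Proposition~\ref{arc-mu-unimodal}. The only cosmetic difference is that you transfer the signed sums from tableaux to arc permutations via the descent-set generating-function identities, whereas the paper does the hook parts via RSK and the $(k,2,1^{n-k-2})$ part via the explicit bijection $\phi$; since the weight depends only on the descent set, the two are equivalent.
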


Let us introduce some notation for the proof of Regev's Formula.
Given a partition $\mu=(\mu_1,\dots,\mu_t)$ of $n$, recall from Section~\ref{sec:muleft} that $\LL_{\mu}^{-1}$ is
the set of permutations whose inverse is $\mu$-left-unimodal.
If $\RSK(\pi)=(P,Q)$, then the fact that $\pi\in\LL_{\mu}^{-1}$ translates into the following condition on the descent set of $Q$:
for every $1\le i< t$ there exists $0\le j_i \le \mu_{i+1}$ such that $\mu_{(i)}+k\in \D(Q)$ for $1\le k <j_i$, and $\mu_{(i)}+k\not\in \D(Q)$ for $j_i\le k< \mu_{i+1}$.
A standard Young tableau $Q$ satisfying this condition is called {\em $\mu$-unimodal}. Denote the set of $\mu$-unimodal standard Young tableaux of
shape $\lambda$ by ${\rm SYT}_\mu^\lambda$.

\begin{proof}[Proof of Theorem~\ref{thm:regev}]
It is well known that the exterior algebra $\wedge V$ is
equivalent as an $\S_{n-1}$-module to a direct sum of all Specht
modules indexed by hooks, see e.g.~\cite[Ex. 4.6]{FH}. By the
branching rule, the decomposition of the induced exterior algebra
$\wedge V\uparrow^{\S_n}$ into irreducibles is then given by
\begin{equation}\label{decomposition}
\chi_n=
\sum\limits_{k=1}^{n} \chi^{(k,1^{n-k})}+ \sum\limits_{k=2}^{n-1}
\chi^{(k,1^{n-k})}+\sum\limits_{k=2}^{n-2} \chi^{(k,2,1^{n-k-2})}.
\end{equation}

Now we use the following character formula for symmetric group irreducible characters, which is a
special case of~\cite[Theorem 4]{Ro2}, see also~\cite{Ram,RIMS}:
$$\chi^\lambda(\mu)=\sum\limits_{T\in {\rm SYT}_\mu^\lambda}(-1)^{|\D(T)\setminus S(\mu)|}.$$
 Applying this formula to equation~\eqref{decomposition},
we get the following expression for the character of $\chi_n$ at a conjugacy class of cycle type $\mu$:
\begin{multline}\label{eq:3sums}
\chi_n(\mu)=
 \sum\limits_{k=1}^n \sum\limits_{T\in {\rm
SYT}_\mu^{(k,1^{n-k})}}(-1)^{|\D(T)\setminus S(\mu)|}+
 \sum\limits_{k=2}^{n-1} \sum\limits_{T\in {\rm
SYT}_\mu^{(k,1^{n-k})}}(-1)^{|\D(T)\setminus S(\mu)|}\\
+\sum\limits_{k=2}^{n-2} \sum\limits_{T\in {\rm
SYT}_\mu^{(k,2,1^{n-k-2})}}(-1)^{|\D(T)\setminus S(\mu)|}.
\end{multline}
By Remark~\ref{knuth1}, the RSK correspondence gives a descent-set-preserving bijection between permutations $\pi\in\LL_n$ and standard Young tableaux $Q$ of hook shape.
Since $\pi\in\LL_\mu^{-1}$ if and only if $Q$ is $\mu$-unimodal, we get
$$
 \sum\limits_{k=1}^n \sum\limits_{T\in {\rm
SYT}_\mu^{(k,1^{n-k})}}(-1)^{|\D(T)\setminus
S(\mu)|}=\sum\limits_{\pi\in \LL_n\cap \LL_\mu^{-1}}
(-1)^{|\D(\pi)\setminus S(\mu)|}.
$$
Combining Remarks~\ref{knuth1} and~\ref{knuth2}, the RSK correspondence also gives a descent-set-preserving bijection between permutations $\pi\in\U_n\setminus\LL_n$ and standard Young tableaux $Q$ of hook shape
having at least two rows or two columns. It follows that
$$
\sum\limits_{k=2}^{n-1} \sum\limits_{T\in {\rm
SYT}_\mu^{(k,1^{n-k})}}(-1)^{|\D(T)\setminus
S(\mu)|}=\sum\limits_{\pi\in (\U_n\setminus \LL_n)\cap
\LL_\mu^{-1}} (-1)^{|\D(\pi)\setminus S(\mu)|}.
$$
For the third sum in~\eqref{eq:3sums}, instead of the RSK correspondence, we use Theorem~\ref{thm:TZ}, which was proved in Section~\ref{sec:bijZT} via a different descent-set-preserving bijection. Standard Young tableaux in $\T_n$ that are $\mu$-unimodal
correspond to permutations in $\Z_n$ whose inverse is $\mu$-left-unimodal, and so
$$
\sum\limits_{k=2}^{n-2} \sum\limits_{T\in {\rm
SYT}_\mu^{(k,2,1^{n-k-2})}}(-1)^{|\D(T)\setminus
S(\mu)|}=\sum\limits_{\pi\in \Z_n\cap \LL_\mu^{-1}}
(-1)^{|\D(\pi)\setminus S(\mu)|}.
$$
Combining the last four equations and using that $\A_n$ is the disjoint union of $\LL_n$, $\U_n\setminus\LL_n$ and $\Z_n=\A_n\setminus\U_n$, we get
\begin{align*}
\chi_n(\mu) & = \sum\limits_{\pi\in \LL_n\cap \LL_\mu^{-1}}
(-1)^{|\D(\pi)\setminus S(\mu)|}+
 \sum\limits_{\pi\in (\U_n\setminus \LL_n)\cap
\LL_\mu^{-1}} (-1)^{|\D(\pi)\setminus S(\mu)|}+
\sum\limits_{\pi\in \Z_n\cap \LL_\mu^{-1}} (-1)^{|\D(\pi)\setminus
S(\mu)|}
\\
& = \sum\limits_{\pi\in \A_n\cap \LL_\mu^{-1}}
(-1)^{|\D(\pi)\setminus S(\mu)|}.
\end{align*}
Proposition~\ref{arc-mu-unimodal} now completes the proof.
\end{proof}

\section{Further representation-theoretic aspects}\label{sec:remarks}

\subsection{An affine Weyl group action}\label{section-action}

Recall that the affine Weyl group $\tC_{n-2}$ is generated by
$$
S=\{s_0,s_1,\ldots, s_{n-2}\}
$$
subject to the Coxeter relations
\begin{align*}
s_i^2=1 \qquad & \forall i,\\
(s_i s_j)^2=1 \qquad &  \mbox{for }|j-i|>1,\\
(s_i s_{i+1})^3=1 \qquad &  \mbox{for }1\le i<n-3,\\
(s_i s_{i+1})^4=1 \qquad &  \mbox{for }i=0,n-3.
\end{align*}

We now describe a natural action of the group $\tC_{n-2}$ on the set of arc permutations
$\A_n$. Recall that $\sigma_i$ denotes the adjacent transposition $(i,i+1)$.

\begin{definition}\label{c-action}
For every $0\le i\le n-2$, define a map $\rho_i: \A_n\to \A_n $ as
follows:
$$
\rho_i(\pi)=\begin{cases}
\pi \sigma_{i+1},&\hbox{\rm if }\pi \sigma_{i+1}\in \A_n;\\
\pi, &\hbox{\rm otherwise.}
\end{cases}
$$
\end{definition}

\begin{proposition}\label{t.action}
The maps $\rho_i,\ 0\le i\le n-2$, when extended multiplicatively,
determine a well-defined $\widetilde C_{n-2}$-action on
the set of arc permutations $\A_n$.
\end{proposition}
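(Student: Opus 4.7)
The plan is to check the four families of Coxeter relations defining $\tC_{n-2}$ for the maps $\rho_i$: (a) $\rho_i^2=\mathrm{id}$ for every $i$; (b) $(\rho_i\rho_j)^2=\mathrm{id}$ for $|i-j|>1$; (c) $(\rho_i\rho_{i+1})^3=\mathrm{id}$ for $1\le i<n-3$; and (d) $(\rho_0\rho_1)^4=\mathrm{id}=(\rho_{n-3}\rho_{n-2})^4$. The first two families are short; the braid-type relations (c) and (d) carry the substance.

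Relation (a) is immediate: $\rho_i$ either fixes $\pi$ or toggles between $\pi$ and $\pi\sigma_{i+1}$, so a second application returns to $\pi$. For (b), observe that right-multiplication of $\pi\in\A_n$ by $\sigma_{i+1}$ alters only the prefix of length $i+1$, so $\pi\sigma_{i+1}\in\A_n$ if and only if $\{\pi(1),\ldots,\pi(i),\pi(i+2)\}$ is an interval in $\bbz_n$. For $|i-j|>1$ this condition is independent of the analogous condition for $\sigma_{j+1}$, and a four-case analysis on which of $\pi\sigma_{i+1}, \pi\sigma_{j+1}$ lies in $\A_n$, using $\sigma_{i+1}\sigma_{j+1}=\sigma_{j+1}\sigma_{i+1}$ in $\S_n$, yields $\rho_i\rho_j=\rho_j\rho_i$.

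For the braid relations I would analyze the orbit of each $\pi\in\A_n$ under the dihedral subgroup $\langle\rho_i,\rho_{i+1}\rangle$, which is a closed walk in $X_n$ punctuated by possible self-loops at vertices where one generator is undefined (a \emph{skip}). For an interior pair with $1\le i<n-3$, the braid identity $(\sigma_{i+1}\sigma_{i+2})^3=1$ in $\S_n$ yields $(\rho_i\rho_{i+1})^3=\mathrm{id}$ whenever no skip occurs along the six-step cycle, and a skip at one step can be shown to force a matching skip at the symmetric step, thereby preserving the relation. For the boundary pair $(\rho_0,\rho_1)$, the required relation $(\rho_0\rho_1)^4=\mathrm{id}$ is \emph{longer} than the underlying $\S_n$ relation $(\sigma_1\sigma_2)^3=1$; the extra factor $\sigma_1\sigma_2$ is absorbed by precisely two skips, as illustrated by the example $n=4$, $\pi=1234$, whose orbit $\{1234,2134,2314,3214\}$ is traversed in eight steps with two skips, each occurring exactly when $\sigma_2$ would create the non-interval $\{1,3\}\subset\bbz_4$.

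The main obstacle is verifying (d). The cleanest approach is likely via the encoding $\enc\colon\A_n\to\WW_n$ of Lemma~\ref{descents-encoding} or the encoding $\psi$ of Subsection~\ref{X-diameter}, under which each $\rho_i$ acts by a transparent local operation on the code---an adjacent swap of coordinates in the interior, a wrap-around modification at the boundary indices---so the skip pattern becomes an explicit combinatorial feature rather than a perturbation of the $\S_n$ action. A bounded case analysis on the local configuration of $\pi$ at positions $1,2,3$ (enabled either by that local description of $\rho_0,\rho_1$ or by the pattern-avoidance criterion of Theorem~\ref{arc-pattern-thm}) then verifies $(\rho_0\rho_1)^4=\mathrm{id}$, and the relation $(\rho_{n-3}\rho_{n-2})^4=\mathrm{id}$ follows by symmetry.
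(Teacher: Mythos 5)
Your overall strategy is the right one --- it is in fact the paper's: relations (a) and (b) are immediate, and the braid relations should be checked through an encoding under which each $\rho_i$ becomes a transparent local operation. But the proposal stops exactly where the substance begins. For the interior relations, the decisive fact is that for $1\le i\le n-3$ one has $\pi\sigma_{i+1}\in\A_n$ if and only if $\psi(\pi)_i\ne\psi(\pi)_{i+1}$, so that in \emph{every} case (skip or not) $\rho_i$ acts on $\psi(\pi)$ as the transposition of coordinates $i$ and $i+1$; the relation $(\rho_i\rho_{i+1})^3=\mathrm{id}$ is then the $S_3$ relation on three coordinates, with no need to track skips at all. Your alternative mechanism --- ``a skip at one step forces a matching skip at the symmetric step'' --- is only a consequence of this coordinate description, is not proved, and cannot be proved without essentially establishing that description first; as written it is an unsupported assertion about closed walks in $X_n$.

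The gap is more serious for the order-$4$ relation, which you yourself identify as the main obstacle and then resolve only by announcing that ``a bounded case analysis \dots verifies'' it, supported by a single orbit in $\S_4$. The example does not generalize as stated: at the other boundary the eight-cycle arises not from skips but from the fact that $\rho_{n-2}$ is \emph{always} defined (switching the last two letters of an arc permutation always yields an arc permutation) and acts by flipping the last coordinate $\psi(\pi)_{n-2}$, so that $\langle\rho_{n-3},\rho_{n-2}\rangle$ is the hyperoctahedral group $B_2$ acting on two bits, whence $(\rho_{n-3}\rho_{n-2})^4=\mathrm{id}$; the relation at $i=0$ then follows from the symmetry $\rho_i(\pi)=\rho_{n-2-i}(\pi w_0)w_0$. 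Your plan of attacking the $i=0$ end directly is workable in principle (the local configurations of $\pi(1),\pi(2),\pi(3)$ are indeed bounded since consecutive prefixes are intervals in $\bbz_n$), but $\rho_0$ changes $\psi_0$ modulo $n$ and flips $\psi_1$, so the analysis there is genuinely messier, and none of it is carried out. Until the coordinate descriptions of the $\rho_i$ (including the boundary cases) are actually established, the proof is an outline rather than an argument.
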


\begin{proof}
To prove that the operation is a well-defined $\tC_{n-2}$-action, it
suffices to show that it is consistent with the defining Coxeter
relations of $\tC_{n-2}$. For every $i$ and
$\pi\in \A_n$,
we have $\rho_i^2(\pi)=\rho_i(\pi)=\pi$ if $\pi \sigma_{i+1}\not\in \A_n$,
and $\rho_i^2(\pi)=\rho_i(\pi\sigma_{i+1})=\pi\sigma_{i+1}^2=\pi$ otherwise. Also,
if $|i-j|>1$, then $\rho_i$ and $\rho_j$ commute, so $(\rho_i\rho_j)^2=1$.

To verify the other two braid relations recall the encoding $\psi:\A_n
\rightarrow \{0,1,\dots,n-1\}\times \{0,1\}^{n-2}$ from
Subsection~\ref{X-diameter}.
For every $1\le i\le n-3$, if $\psi(\pi)_i=\psi(\pi)_{i+1}$ then $\pi
\sigma_{i+1}\not\in \A_n$, thus $\rho_i(\pi)=\pi$; if
$\psi(\pi)_i\ne \psi(\pi)_{i+1}$ then $\pi \sigma_{i+1}\in \A_n$,
thus $\rho_i(\pi)=\pi\sigma_{i+1}$. One concludes that, in both
cases, the effect of $\rho_i$ on $\psi(\pi)$ is to switch the entries $\psi(\pi)_i$ and $\psi(\pi)_{i+1}$.
It follows that for every $1\le i<n-3$, $\rho_i\rho_{i+1}\rho_i= \rho_{i+1}\rho_i\rho_{i+1}$.

Finally, note that for every $\pi\in \A_n$, $\pi \sigma_{n-1}\in \A_n$, thus
$\rho_{n-2}(\pi)=\pi \sigma_{n-1}$ and  $$\psi(\rho_{n-2}(\pi))_{n-2}=1-\psi(\pi)_{n-2},$$ that is,
$\rho_{n-2}$ flips the value of $\psi(\pi)_{n-2}$. On the other hand, as shown above, $\rho_{n-3}$ switches the entries $\psi(\pi)_{n-3}$ and $\psi(\pi)_{n-2}$. One concludes that
$(\rho_{n-3}\rho_{n-2})^4=1$. Since right multiplication by $w_0=n\dots 21$ is
an involution on $\A_n$ and $\rho_i(\pi)=\rho_{n-2-i}(\pi w_0) w_0$, it follows by symmetry that $(\rho_{0}\rho_{1})^4=1$ as well.
\end{proof}

Given a group $G$ with a generating set $R$, and
an action of $G$ on a set ${\cal C}$,
the associated Schreier graph is the graph with vertex set
${\cal C}$ and edge set $\{(x,rx):\ x\in {\cal C}, r\in R\}$.
Recall the graph $X_n$ from Section~\ref{X-graph}. The following is clear by definition.

\begin{remark}\label{isomorphism}
The graph $X_n$ is isomorphic (up to loops) to the Schreier graph
determined by the above $\tC_{n-2}$-action on $\A_n$.
\end{remark}

\begin{corollary}
The above $\widetilde C_{n-2}$-action on $\A_n$ is transitive.
\end{corollary}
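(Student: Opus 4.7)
The plan is to reduce transitivity of the group action to connectedness of the associated Schreier graph, and then invoke the results already established about $X_n$. Recall that a group action is transitive if and only if its Schreier graph (with respect to any generating set) is connected: an edge $(x,rx)$ corresponds to applying a generator $r\in R$, so a path from $x$ to $y$ in the Schreier graph translates into an element of $G$ (a product of generators) sending $x$ to $y$.

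First, I would invoke Remark~\ref{isomorphism}, which tells us that the Schreier graph of the $\tC_{n-2}$-action on $\A_n$ with respect to the generating set $\{\rho_0,\rho_1,\dots,\rho_{n-2}\}$ agrees with $X_n$ up to loops. Loops (coming from those $\rho_i$ for which $\pi\sigma_{i+1}\notin\A_n$, i.e., the fixed points of the generator) do not affect the connectivity of the underlying graph, so the Schreier graph is connected if and only if $X_n$ is connected.

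Next, I would appeal to Theorem~\ref{A-graph}(i), which states that $X_n$ has diameter $\binom{n}{2}$. Having finite diameter means in particular that every pair of vertices is joined by a path, so $X_n$ is connected. Combining these two observations, for any two arc permutations $\pi,\tau\in\A_n$ there is a sequence $\rho_{i_1},\rho_{i_2},\ldots,\rho_{i_k}$ of generators with $\rho_{i_k}\cdots\rho_{i_2}\rho_{i_1}(\pi)=\tau$, which is exactly the statement of transitivity.

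There is no genuine obstacle here: the heavy lifting was done in Section~\ref{X-graph} (the diameter bound) and in Proposition~\ref{t.action} and Remark~\ref{isomorphism} (identifying the Schreier graph with $X_n$ up to loops). The only thing to be careful about is the standard point that loops in a Cayley-type graph do not obstruct transitivity, since each loop at $\pi$ corresponds to a generator that fixes $\pi$ and can simply be omitted from the word realizing the desired displacement.
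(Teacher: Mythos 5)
Your proposal is correct and follows exactly the paper's own route: reduce transitivity to connectedness of the Schreier graph, identify that graph with $X_n$ up to loops via Remark~\ref{isomorphism}, and conclude connectedness from the finite diameter in Theorem~\ref{A-graph}(i). The only difference is that you spell out the (standard) points about loops and paths that the paper leaves implicit.
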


\begin{proof}
An action is transitive if and only if the associated Schreier
graph is connected. The result now follows from
Theorem~\ref{A-graph}(i) together with
Remark~\ref{isomorphism}.
\end{proof}

Let $J:=S\setminus \{s_{0}\}=\{s_1,\ldots, s_{n-3}, s_{n-2}\}$.
Recall that the maximal parabolic subgroup $W_J$
is isomorphic to the hyperoctahedral group $B_{n-2}$.

\begin{proposition}
\begin{itemize}
\item[(i)] The maps $\rho_i,\ 0< i\le n-2$, when extended
multiplicatively,
determine a well-defined $B_{n-2}$-action on $\A_n$.

\item[(ii)] The orbits of this action are $\{\pi\in \A_n:\ \pi(1)=k\}$, for $1\le k\le n$.

\item[(iii)] The $B_{n-2}$-action on each of these orbits is
multiplicity-free.
\end{itemize}
\end{proposition}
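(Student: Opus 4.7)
Part (i) should be nearly automatic: the maps $\{\rho_1, \ldots, \rho_{n-2}\}$ are a subset of the generators already shown in Proposition~\ref{t.action} to define a $\tC_{n-2}$-action, and the relations among $\{s_1, \ldots, s_{n-2}\}$ obtained by deleting $s_0$ from the $\tC_{n-2}$ Coxeter diagram are exactly those defining $B_{n-2}$. Hence restriction to the parabolic subgroup $W_J \cong B_{n-2}$ yields a well-defined $B_{n-2}$-action on $\A_n$ with no further checks required.

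For part (ii), the plan is to apply the encoding $\psi: \A_n \to \{0,1,\ldots,n-1\} \times \{0,1\}^{n-2}$ introduced in Subsection~\ref{X-diameter}, in which $\psi(\pi)_0 = \pi(1) - 1$ records the first letter. The verification of braid relations in the proof of Proposition~\ref{t.action} shows that $\rho_i$ for $1 \le i \le n-3$ transposes $\psi(\pi)_i$ and $\psi(\pi)_{i+1}$, while $\rho_{n-2}$ flips $\psi(\pi)_{n-2}$; in particular, every generator of $B_{n-2}$ fixes $\psi(\pi)_0$, so $\pi(1)$ is constant on $B_{n-2}$-orbits. For transitivity on a fixed fiber $\{\pi \in \A_n : \pi(1) = k\}$, I would note that $\rho_1, \ldots, \rho_{n-3}$ generate the full symmetric group on positions $1, \ldots, n-2$, and that conjugating $\rho_{n-2}$ through an arbitrary position-permutation allows one to flip any single bit of $(\psi(\pi)_1, \ldots, \psi(\pi)_{n-2})$; thus the action on each fiber hits all $2^{n-2}$ binary patterns.

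For part (iii), the plan is to transport the $B_{n-2}$-action on each fiber to the standard hyperoctahedral action on $\{0,1\}^{n-2}$ via $\pi \mapsto (\psi(\pi)_1, \ldots, \psi(\pi)_{n-2})$. Under this identification, the stabilizer of the all-zeros vertex is the symmetric subgroup $\sym_{n-2} \subset B_{n-2}$ of sign-free coordinate permutations, so the $B_{n-2}$-module structure on each fiber is $\mathrm{Ind}_{\sym_{n-2}}^{B_{n-2}} \mathbf{1}$. I would then invoke the classical fact (see~\cite{Md}) that $(B_{n-2}, \sym_{n-2})$ is a Gelfand pair, with decomposition $\bigoplus_{j=0}^{n-2} \chi^{((n-2-j),(j))}$ into bipartition-indexed irreducibles; multiplicity-freeness follows at once. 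The main obstacle is this last step: parts (i) and (ii) are largely bookkeeping on the encoding machinery already developed in the paper, whereas part (iii) requires cleanly pinning down the orbit action as the natural hypercube action and then citing (or re-deriving via an explicit character calculation on $\{0,1\}^{n-2}$) the Gelfand pair decomposition.
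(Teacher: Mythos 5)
Your proposal is correct and follows essentially the same route as the paper: restriction of the $\tC_{n-2}$-action to the parabolic subgroup for (i), transporting the fiber $\{\pi\in\A_n:\pi(1)=k\}$ via $\psi$ to the natural hyperoctahedral action on $\{0,1\}^{n-2}$ for (ii), and identifying that orbit module as $\mathrm{Ind}_{\sym_{n-2}}^{B_{n-2}}\mathbf{1}$, whose multiplicity-freeness is a known fact, for (iii). The only cosmetic difference is the reference cited for the multiplicity-free decomposition.
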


\begin{proof}
Part $(i)$ follows from Proposition~\ref{t.action}.

For $i>0$, it is clear that $\rho_i(\pi)(1)=\pi(1)$, hence the sets of arc
permutations with fixed first letter are invariant under this
$B_{n-2}$ action. On the other hand, for each $1\le k\le n$, the map $\psi$
defined in Subsection~\ref{X-diameter} determines a bijection from
 $\{\pi\in \A_n:\ \pi(1)=k\}$
to $0{-}1$ vectors of length $n-2$. The restricted $B_{n-2}$ action
on $\{\pi\in \A_n:\ \pi(1)=k\}$ may thus be identified with the
natural $B_{n-2}$-action on all subsets of $[n-2]$, which
is transitive, implying $(ii)$.

To prove $(iii)$ recall that the $B_{n-2}$-representation induced
from the trivial representation of $S_{n-2}$ is multiplicity-free,
see e.g.~\cite[Lemma 2.2(a)]{AFR-hooks},  and notice that this
representation is isomorphic to the $B_{n-2}$-action on all
subsets of $[n-2]$.
\end{proof}

The following question was posed by David Vogan~\cite{Vogan}.

\begin{question} Is the $\widetilde C_{n-2}$-module determined by its action on $\A_n$
multiplicity-free?
\end{question}

\subsection{Representation-theoretic proofs}\label{rep_aspects-section}

By Remarks~\ref{knuth1} and~\ref{knuth2}, the sets $\LL_n$ and $\U_n$ are unions of Knuth classes, hence
they carry the associated symmetric group representation. As noted below Remark~\ref{knuth3}, the set $\Z_n$ of non-unimodal arc permutations is not a
union of Knuth classes. However,  by
Proposition~\ref{non-unimodal-enumeration}, its size is equal to the
number of standard Young tableaux of hook shape plus one box. Here
is a short representation-theoretic proof of
Proposition~\ref{non-unimodal-enumeration}.

\begin{proof}[Proof of Proposition~\ref{non-unimodal-enumeration}]
By the decomposition of the induced exterior algebra $\wedge
V\uparrow^{\S_n}$ into $\S_n$-irreducible characters, which is
described in (\ref{decomposition}),
$$
f^{(n)}+f^{(1^n)}+2\sum\limits_{k=2}^{n-1}
f^{(k,1^{n-k})}+\sum\limits_{k=2}^{n-2} f^{(k,2,1^{n-k-2})}=\dim
\wedge V\uparrow^{\S_n}=n \cdot \dim \wedge V= n 2^{n-2}=|\A_n|.
$$
On the other hand, by  Remark~\ref{knuth2},
$|\U_n|=f^{(n)}+f^{(1^n)}+2\sum\limits_{k=2}^{n-1}
f^{(k,1^{n-k})}$. Since $\U_n\subseteq \A_n$, one concludes
$$
|\Z_n|=|\A_n\setminus \U_n|=\sum\limits_{k=2}^{n-2}
f^{(k,2,1^{n-k-2})}.
$$
\end{proof}

\begin{question}
Find representation-theoretic proofs of Theorem~\ref{thm:TZ} and
other results in Section~\ref{equid}.
\end{question}

By Corollary~\ref{mc_enumeration}, the number of maximal chains in
any interval of $\W(\U_n)$ is equal to twice the number of
standard Young tableaux of shifted staircase shape. It is well
known that this is a
dimension of a projective $\S_n$ representation.

\begin{question}
Determine a projective $\S_n$ representation on the set of
maximal chains in $\W(\U_n)$.
\end{question}

\section{Appendix: shuffles}\label{sec:appendix}

The purpose of this section is to point out that permutations
obtained as shuffles of two increasing sequences have properties similar to those of unimodal and arc permutations.
In analogy to Theorem~\ref{prefixes-theorem} for $\LL_n$, shuffles are obtained as partial fillings of certain shapes.
As a consequence, the weak order restricted to these shuffles has properties analogous to those given in Section~\ref{section-order} for $\W(\U_n)$.

\subsection{Prefixes associated with a rectangle}

Similarly to the partial fillings of the shifted staircase in
Section~\ref{section-prefixes}, we will now consider partial
fillings of a $k\times m$ rectangle with rows labeled
$k,\dots,2,1$ from top to bottom, and columns labeled
$k+1,k+2,\dots,k+m$ from left to right. Again, to each of the
entries $\ell$ in the partial filling, if $\ell$ lies in row $i$
and column $j$, we associate the transposition $(i,j)$. The
product of these transpositions gives a permutation, and the set
of permutations obtained in this way is denoted by $\Sh_{k,m}$.

It is easy to see that $\Sh_{k,m}$ is also the set of permutations
$\pi\in\S_{m+k}$ which are a shuffle of the two sequences
$1,2,\dots,k$ and $k+1,k+2,\dots,k+m$ (that is, both appear as
subsequences of $\pi$ from left to right).

As is the case for $\LL_n$, $\U_n$ and $\A_n$, the set
$\bar\Sh_n=\bigcup_{k+m=n} \Sh_{k,m}$ can be characterized in
terms of pattern avoidance.

\begin{proposition}
$\bar\Sh_n=\S_n(321,2143,2413)$.
\end{proposition}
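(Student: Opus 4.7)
The plan is first to rephrase membership in $\bar\Sh_n$ as a descent condition on $\pi^{-1}$. By the definition of a shuffle, $\pi\in\Sh_{k,m}$ iff the values $1,2,\dots,k$ appear in $\pi$ from left to right in that order and $k+1,\dots,k+m$ appear from left to right in that order. Equivalently, $\pi^{-1}(1)<\dots<\pi^{-1}(k)$ and $\pi^{-1}(k+1)<\dots<\pi^{-1}(n)$, i.e.\ $\D(\pi^{-1})\subseteq\{k\}$. Therefore
\[
\bar\Sh_n \;=\; \{\pi\in\S_n : |\D(\pi^{-1})|\le 1\}.
\]
I would use this reformulation throughout.

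For the inclusion $\bar\Sh_n\subseteq \S_n(321,2143,2413)$, I would observe that every $\pi\in\Sh_{k,m}$ is a union of two increasing subsequences (the entries $\le k$ and the entries $>k$), so $\pi$ avoids $321$. To rule out $2143$ and $2413$, I would do a short case analysis on which of the two increasing blocks each entry of a hypothetical occurrence lies in. For instance in a $2143$-occurrence with positions $p_1<p_2<p_3<p_4$, since $\pi(p_1)>\pi(p_2)$, the entries $\pi(p_1),\pi(p_2)$ cannot both be small or both large; the only possibility is $\pi(p_1)$ large, $\pi(p_2)$ small. Then $\pi(p_3)>\pi(p_1)>k$ forces $\pi(p_3)$ large, and $\pi(p_4)>\pi(p_1)>k$ forces $\pi(p_4)$ large, contradicting that $\pi(p_3),\pi(p_4)$ with $p_3<p_4$ must be increasing. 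An analogous three-line argument rules out $2413$.

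For the reverse inclusion, I would argue by contrapositive: assume $|\D(\pi^{-1})|\ge 2$ with descents at $a<b$, and set
\[
p=\pi^{-1}(a),\quad q=\pi^{-1}(a+1),\quad r=\pi^{-1}(b),\quad s=\pi^{-1}(b+1),
\]
so $q<p$ and $s<r$. In the degenerate case $a+1=b$ we have $q=r$, giving positions $s<q<p$ with values $b+1,a+1,a$, a $321$-pattern. Otherwise all four positions are distinct, and there are six possible interleavings of the pairs $\{q,p\}$ and $\{s,r\}$. In four of them one obtains a decreasing triple among the values $a,a+1,b,b+1$, which is a $321$-pattern. The remaining two are exactly $q<p<s<r$, giving values $a+1,a,b+1,b$ (a $2143$-occurrence), and $q<s<p<r$, giving values $a+1,b+1,a,b$ (a $2413$-occurrence).

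The main obstacle is organizing the reverse-direction case analysis cleanly; in particular, one wants to see why all three of the patterns $321$, $2143$, $2413$ are really needed. The calculation above makes this transparent: exactly two of the six interleavings fail to contain a $321$-pattern, and those two are precisely what force the additional patterns $2143$ and $2413$ into the list. The forward direction is a routine small/large classification once one uses the union-of-two-increasing-subsequences structure.
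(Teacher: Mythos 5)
Your proof is correct and complete. The paper actually states this proposition without proof, so there is no argument to compare against; your write-up fills that gap. Your reformulation $\bar\Sh_n=\{\pi\in\S_n:\ |\D(\pi^{-1})|\le 1\}$ is exactly the characterization the paper itself records a few lines later (``Shuffles can be characterized as those permutations $\pi$ with $\D(\pi^{-1})=\{k\}$ for some $k$''), and your version with ``$\subseteq\{k\}$'' is the more careful one since it accounts for the identity. Both directions check out: the small/large classification disposes of $321$, $2143$ and $2413$ in the forward direction, and in the reverse direction the six interleavings of $\{q,p\}$ and $\{s,r\}$ (plus the degenerate case $q=r$) are exhaustive and each yields one of the three patterns, with the two non-$321$ interleavings producing precisely $2143$ and $2413$ --- which, as you note, explains why all three patterns are needed in the basis.
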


Shuffles can be easily enumerated, obtaining that
$|\S_n(321,2143,2413)|=|\bar\Sh_n|=2^n-n$ for $n\ge2$. As in
Section~\ref{pattern-section}, it is also the case here that
shuffles are a grid class, consisting of those permutations that
can be drawn on the picture in Figure~\ref{fig:grid_shuffles}. We
write $$\bar\Sh_n=\G_n\left(\begin{array}{c} 1 \\  1
\end{array}\right).$$

\begin{figure}[htb]
\begin{center}
\includegraphics[width=3cm,angle=-90]{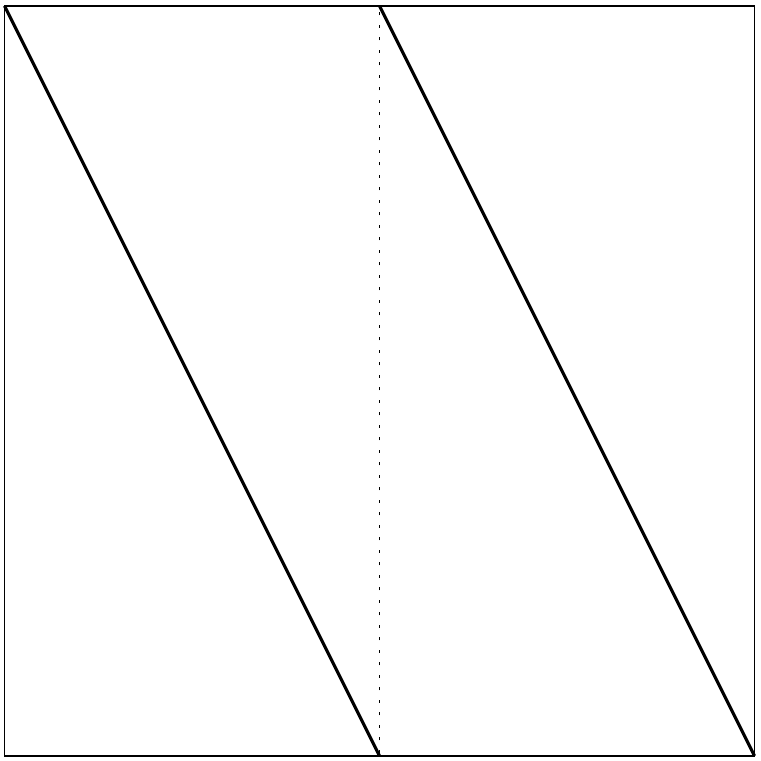}
\caption{\label{fig:grid_shuffles} Grid for shuffles.}
\end{center}
\end{figure}

Shuffles can be characterized as those permutations $\pi$ with
$\D(\pi^{-1})=\{k\}$ for some $k$. For $\pi\in\bar\Sh_n$, if
$\RSK(\pi)=(P,Q)$, then $P$ is a two-row tableau with consecutive
entries $k+1,k+2,\dots,k+\ell$ in the second row, and $Q$ is any
two-row tableau with the same shape as $P$. It follows that
shuffles are a union of Knuth classes.

\subsection{Weak order and enumeration of maximal chains}

Let $\W(\bar\Sh_n)$ be the subposet of $\W(\S_n)$ induced by the
subset $\bar\Sh_n=\S_n(321,2143,2413)$. The following result
follows from arguments analogous to the ones used in
Section~\ref{section-order}.

\begin{proposition} The poset $\W(\bar\Sh_n)$ has the following properties.
\begin{enumerate}\renewcommand{\labelenumi}{(\roman{enumi})}
\item[(i)] The local maxima are exactly the permutations
$\pi_k:=(k+1)(k+2)\dots n12\dots k$ for some $k$.

\item A permutation is in the interval $[e, \pi_k]$ if and only if
it belongs to $\Sh_{k,n-k}$; hence, the number of elements in this
interval is $\binom{n}{k}$.

\item The number of maximal chains in $[e, \pi_k]$ is equal to the
number of standard Young tableaux of rectangular shape $k\times
(n-k)$.
\end{enumerate}
\end{proposition}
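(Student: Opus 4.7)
The plan is to mirror Section~\ref{section-order}. First I would establish the key preliminary that under the partial-filling construction of Section~\ref{sec:appendix}, each $\pi\in\Sh_{k,m}$ corresponds to a Young diagram $\shape(\pi)$ sitting inside a $k\times m$ rectangle, whose boxes are in natural bijection with the inversions of $\pi$: since $\{1,\dots,k\}$ and $\{k+1,\dots,k+m\}$ each appear in increasing order in a shuffle, the only inversions are the ``cross'' pairs $(b,a)$ with $a\in\{1,\dots,k\}$, $b\in\{k+1,\dots,k+m\}$, and $b$ preceding $a$ in $\pi$; indexing these by $a$ produces a weakly increasing sequence of row lengths, i.e.\ a Young shape inside the $k\times m$ rectangle. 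Following the argument of Theorem~\ref{weak_domination}, I would then prove that for $\pi,\tau\in\bar\Sh_n$, $\pi\le\tau$ in $\W(\S_n)$ if and only if $\pi$ and $\tau$ belong to the same $\Sh_{k,n-k}$ and $\shape(\pi)\subseteq\shape(\tau)$. The forward direction uses $\D(\pi^{-1})\subseteq\D(\tau^{-1})$ from~\cite[Cor.~1.5.2, Prop.~3.1.3]{BB} together with the descent-set characterization $\D(\pi^{-1})\subseteq\{k\}$ of shuffles in $\Sh_{k,n-k}$ noted in the excerpt; the converse reduces to noting that adding a border box to $\shape(\pi)$ corresponds to right multiplication by an adjacent transposition that creates exactly one new cross inversion.

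Given this shape criterion, parts~(i)--(iii) follow quickly. For~(i), a shuffle is a local maximum of $\W(\bar\Sh_n)$ iff its shape cannot be enlarged inside its ambient rectangle, i.e.\ iff $\shape(\pi)$ is the full $k\times(n-k)$ rectangle; a direct reading of the corresponding partial filling identifies this extremal permutation as $\pi_k=(k+1)(k+2)\cdots n\,1\,2\cdots k$. For~(ii), the shape criterion identifies $[e,\pi_k]$ with the lattice of Young diagrams inside a $k\times(n-k)$ rectangle, equivalently with $\Sh_{k,n-k}$; the standard lattice-path bijection then provides the count $\binom{n}{k}$. For~(iii), since the covering relation in $[e,\pi_k]$ is exactly ``add one box'', maximal chains correspond to linear extensions of the box-inclusion order on the $k\times(n-k)$ rectangle, which are by definition standard Young tableaux of that shape.

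I expect the only real obstacle to be setting up the shape criterion carefully: I must verify that no weak-order cover among shuffles can jump between components $\Sh_{k,n-k}$ and $\Sh_{k',n-k'}$ with $k\ne k'$ (which is handled by the descent-set inclusion argument above), and that every border-box addition can be realized by an honest adjacent transposition acting on the right (which follows by unpacking the inversion/box correspondence and identifying the swap in question with the first-sequence/second-sequence pair indexed by the new box). Both points are direct consequences of the cross-inversion description of shuffles, so the bulk of the remaining work is notational rather than substantive.
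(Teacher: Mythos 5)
Your proposal is correct and follows essentially the same route the paper intends: the paper gives no explicit proof here, stating only that the result ``follows from arguments analogous to the ones used in Section~\ref{section-order},'' and your cross-inversion shape criterion inside the $k\times(n-k)$ rectangle is precisely the rectangular analogue of Theorem~\ref{weak_domination} and Corollaries~\ref{mc_enumeration_1}--\ref{mc_enumeration}. The one point worth recording explicitly is the observation you implicitly rely on, that for $\pi\neq e$ the ambient rectangle is unique (since $\D(\pi^{-1})\subseteq\{k\}\cap\{k'\}$ forces $k=k'$), which makes the ``same component'' clause in your domination criterion well posed.
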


\end{document}